\def\XXint#1#2#3{{\setbox0=\hbox{$#1{#2#3}{\int}$ }
		\vcenter{\hbox{$#2#3$ }}\kern-.6\wd0}}
\DeclareMathSymbol{\intprod}{\mathbin}{MnSyC}{'270}
\newcommand{\LB}{\left[}
\newcommand{\RB}{\right]}
\newcommand{\LA}{\langle}
\newcommand{\RA}{\rangle}
\newcommand{\N}{{\mathbb N}}
\newcommand{\R}{{\mathbb R}}
\newcommand{\inj}{{\mathrm{inj} }}
\newcommand{\del}{{\partial}}
\newtheorem{thm}{Theorem}[section]
\newtheorem{lemma}[thm]{Lemma}
\newtheorem*{lemma*}{Lemma}
\newtheorem{prop}[thm]{Proposition}
\newtheorem{cor}[thm]{Corollary}
\newtheorem{conj}[thm]{Conjecture}
\newtheorem*{conj*}{Conjecture}
\newtheoremstyle{others}
{3pt}
{2pt}
{}
{}
{\bf}
{.}
{.5em}
{}
\theoremstyle{others}
\newtheorem{rmk}[thm]{Remark}
\newtheorem*{rmk*}{Remark}
\numberwithin{equation}{section}
\begin{document}

\title{Uhlenbeck compactness for Yang-Mills flow in higher dimensions}
\author{Alex Waldron}
\address{University of Wisconsin, Madison}
\email{waldron@math.wisc.edu}

\begin{abstract}
This paper proves a general Uhlenbeck compactness theorem for sequences of solutions of Yang-Mills flow on Riemannian manifolds of dimension $n \geq 4,$ including rectifiability of the singular set at finite or infinite time.
\end{abstract}

\maketitle

\tableofcontents

\thispagestyle{empty}

\section{Introduction}

\subsection{Background}

This article generalizes the well-known sequential compactness theorems for Yang-Mills connections, due to Uhlenbeck \cite{uhlenbecklp} and Nakajima \cite{nakajimacompactness}, to solutions of the Yang-Mills flow
\begin{equation}\tag{YM}
\frac{\partial A}{\partial t} = - D_A^*F_A.
\end{equation}
Here $A(t)$ is a time-dependent family of connections on a vector bundle $E$ over a Riemannian manifold.
For background on Yang-Mills flow (YM), we refer the reader to the textbook of Donaldson and Kronheimer \cite{donkron}, \S 6, or \cite{instantons}, \S 2. Detailed expository treatments of Uhlenbeck's compactness theory for gauge fields (connections) have appeared in \cite{donkron}, \S 2 and 4, and the textbook of Wehrheim \cite{wehrheimuhlenbeck}.


Several notable compactness theorems have been established for geometric flows, building on prior work for the corresponding elliptic equation. 
The first is due to Brakke \cite{brakkeflow} in the case of mean curvature flow, generalizing Allard's compactness theorem \cite{allardfirstvariation} for minimal submanifolds with locally bounded area. 
Hamilton \cite{hamiltoncompactness} proved a smooth compactness theorem for solutions of the Ricci flow with uniformly bounded Riemann tensor and positive injectivity radius, which has been substantially extended by Chen and Wang \cite{chenwangspaceofricciI, chenwangspaceofricciII} (see also Bamler \cite{bamlerstructuretheory, bamlerconvergenceannals}). These are parabolic analogues of the celebrated compactness theory for manifolds with controlled Ricci tensor, due to Cheeger \cite{cheegerfiniteness}, Gromov \cite{gromovstructures}, Anderson \cite{andersonricci}, Cheeger-Colding-Tian \cite{cheegercoldingtian}, and others. 
In the case of harmonic maps and harmonic map flow, respectively, the most general compactness theorems are due to Lin \cite{linharmonicannals} and Lin-Wang \cite{linwangharmonic} (see also the textbook \cite{linwangbook}). 

Compactness results of this kind are primarily used for analyzing individual solutions of the flow---meaning, the structure of finite-time singular sets, infinite-time convergence, and (the ubiquitous) blowup arguments. In the context of Yang-Mills flow on compact K\"ahler manifolds \cite{sibleyasymptotics, sibleywentworth}, one typically relies on the work of Hong and Tian \cite{hongtian}. Our main motivation is to strengthen the results of \cite{hongtian} in order to apply them outside the K\"ahler context. Specifically, our analysis includes the case of finite-time blowup as well as infinite-time blowup in the absence of a holomorphic structure. 
These results have been used to obtain a partial characterization of the infinite-time singular set for Yang-Mills flow on manifolds of special holonomy \cite{oliveirawaldron}.

As far as the analysis is concerned, the case of (YM) differs from those discussed above in that certain aspects are simpler---for instance, we are able to prove rectifiability of the singular set using a Laplace-transform trick combined with Preiss's Theorem---while other aspects are more difficult, especially those related to the gauge freedom of the underlying bundle. 
%


\subsection{Statement of results}

%


\begin{thm}\label{thm:sigma}
Let $M$ be a Riemannian manifold (without boundary) of dimension $n \geq 4.$ Fix $0 < \tau < \infty,$ and let $\{ A_i(x,t) \}$ be a sequence of smooth solutions of (YM) on $M \times \LB 0 , \tau \right).$ Writing $F_i(t) = F_{A_i(t)},$ 
assume that for any compactly contained open set $U \Subset M,$ there holds
\begin{equation}\label{sigma:finitenergy}
\sup_{\stackrel{i\in \N}{0 \leq t < \tau}} \int_{U} |F_i (t)|^2 \, dV < \infty.
\end{equation}
For $\epsilon_0 > 0$ sufficiently small (depending only on $n$), define
\begin{equation}\label{sigma:sigmadef}
\Sigma = \lbrace x \in M \ | \ \liminf_{R \searrow 0} \liminf_{i \to \infty} \Phi\left(A_i; R, x, \tau - R^2 \right) \geq \epsilon_0 \rbrace
\end{equation}
where $\Phi$ is the weighted energy functional given by (\ref{phidef}) below. Then, $\Sigma$ is closed and of locally finite $(n-4)$-Hausdorff measure.


Let $\tau_i \nearrow \tau.$ Choose any subsequence, again denoted by $\{i\},$ such that the limit of measures
\begin{equation*}
\mu = \lim_{i \to \infty} \left( |F_i(x, \tau_i)|^2 \, dV \right)
\end{equation*}
exists, and redefine 
$\Sigma$ according to (\ref{sigma:sigmadef}). If, for each $U \Subset M,$ there holds
\begin{equation}\label{sigma:dstarfass}
\lim_{\sigma \nearrow \tau} \liminf_{i \to \infty} \int_{\sigma}^\tau \!\!\!\!\! \int_U |D^*F_i|^2 \, dV dt = 0
\end{equation}
then $\Sigma$ is $(n-4)$-rectifiable, \textit{i.e.}, consists of a countable union of $(n-4)$-dimensional Lipschitz submanifolds (up to $\mathcal{H}^{n-4}$-measure zero).
\end{thm}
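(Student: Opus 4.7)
The argument rests on three ingredients: almost-monotonicity of the weighted energy $\Phi$ along (YM); a parabolic $\epsilon$-regularity theorem stating that $\Phi(A; R, x_0, t_0) < \epsilon_0$ implies uniform pointwise control of $|F|$ on a smaller parabolic cylinder; and Preiss's rectifiability theorem, applied to the defect measure of $\{|F_i(\tau_i)|^2\, dV\}$. The first two I will take as inputs (established earlier in the paper from the standard Hamilton-Chen-Struwe-Struwe style monotonicity and Moser iteration), and use them to package the rest of the proof around a tangent-measure analysis.

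For closedness of $\Sigma$ and local finiteness of $\mathcal{H}^{n-4}(\Sigma)$, the key observation is that the Gaussian kernel appearing in $\Phi$ is smooth, so $\Phi(A_i; R, \cdot, \tau - R^2)$ is continuous in its center point with modulus of continuity uniform in $i$. If $x_0 \notin \Sigma$, select $R > 0$ and a subsequence with $\Phi(A_i; R, x_0, \tau - R^2) < \epsilon_0$; continuity propagates the strict inequality to a neighborhood, showing $\Sigma^c$ is open. For the Hausdorff bound I plan a Vitali covering: at each $x \in \Sigma \cap U$ there is a small radius $R_x$ at which $\Phi(A_i; R_x, x, \tau - R_x^2) \geq \tfrac{1}{2} \epsilon_0$ for infinitely many $i$, and the Gaussian weight dominates a localized $L^2$-curvature density, giving $\epsilon_0 R_x^{n-4} \leq C \int_{B_{2R_x}(x)} |F_i(t_x)|^2\, dV$ for suitable $t_x$ close to $\tau$. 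Passing to a disjoint subfamily and summing, the uniform energy bound (\ref{sigma:finitenergy}) bounds $\sum R_j^{n-4}$, and hence $\mathcal{H}^{n-4}(\Sigma \cap U)$, by a constant depending only on $U$, $n$, and $\epsilon_0$.

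For rectifiability, I would first extract an Uhlenbeck-gauged subsequential limit $A_\infty$ of $A_i(\tau_i)$ away from $\Sigma$ (possible thanks to $\epsilon$-regularity on the complement) and decompose $\mu = |F_{A_\infty}|^2\, dV + \nu$ with $\nu$ supported on $\Sigma$. The monotonicity formula yields a uniform upper $(n-4)$-density bound $\Theta^*(\nu, x) \leq C$, while the defining condition for $\Sigma$ combined with $\epsilon$-regularity gives a matching lower bound $\Theta_*(\nu, x) \geq c\epsilon_0$ on $\Sigma$; hence $\nu$ is mutually absolutely continuous with $\mathcal{H}^{n-4} \lfloor \Sigma$ with density bounded between two constants, and rectifiability of $\Sigma$ reduces to rectifiability of $\nu$. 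At $\nu$-a.e.\ $x \in \Sigma$, I analyze tangent measures to $\nu$ obtained via parabolic rescaling $A_i(x + R_k y, \tau_i + R_k^2 s)$ with $R_k \searrow 0$. The crucial role of assumption (\ref{sigma:dstarfass}) is that, after a diagonal choice, $-\partial_t A = D^*F \to 0$ in $L^2$ on every fixed rescaled parabolic cylinder, so the limiting rescaled flow is time-independent: a \emph{stationary} weak Yang-Mills connection $A_\infty$ on $\mathbb{R}^n$. Equality in the monotonicity formula then forces $A_\infty$ to be self-similar of degree $-1$ under dilations centered at $x$, and a cone-splitting argument in the style of Cheeger-Naber (adapted to the weak curvature setting as in Lin-Wang for harmonic maps) shows that at $\nu$-a.e.\ $x$ the tangent connection is translation-invariant along an $(n-4)$-dimensional subspace $V$. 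Consequently every tangent measure is of the form $c\, \mathcal{H}^{n-4} \lfloor V$, and Preiss's theorem delivers rectifiability of $\nu$.

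The main obstacle I anticipate is the coordination of gauge fixing with the blow-up limit in the last step. Uhlenbeck's theorem must be applied on each rescaled ball, so the gauges transforming $A_i(x + R_k y, \cdot)$ into Coulomb form vary with $k$ and possibly with $i$; to make the stationary-tangent conclusion rigorous, I need the defect measure $\nu$ and its tangent measures to be intrinsic (independent of these choices), while the $L^2$ bound on $\partial_t A$ must survive both the rescaling (which contracts the time window by $R_k^2$) and the gauge change. This requires a careful diagonal extraction of $\sigma_i \nearrow \tau$, $\tau_i \nearrow \tau$, and $R_k \searrow 0$, essentially a parabolic counterpart of Lin's integral lemma for harmonic map flow, matched with the scale-by-scale gauge-fixing setup that makes the Uhlenbeck compactness theorem usable on small balls in the presence of only an $L^2$ curvature bound.
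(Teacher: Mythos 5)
Your closedness and Hausdorff-finiteness steps follow the same Nakajima-style scheme as the paper, but both sketches are missing an ingredient. For closedness: knowing $\Phi(A_i;R,y,\tau-R^2)<\epsilon_0$ at the \emph{single} scale $R$ for $y$ near $x_0$ does not put $y$ outside $\Sigma$, since membership in $\Sigma$ is a $\liminf$ over $R\searrow 0$; you must also invoke Hamilton's monotonicity (Theorem \ref{thm:hamilton}) to propagate smallness from scale $R$ down to all smaller scales. Continuity of the Gaussian handles the center, monotonicity handles the scale; the paper runs exactly this argument in the direct (sequence) form. For the covering argument: the displayed inequality $\epsilon_0 R_x^{n-4}\leq C\int_{B_{2R_x}(x)}|F_i(t_x)|^2\,dV$ is not justified as stated, because the Gaussian kernel in $\Phi$ is not compactly supported and, with only the total energy bound, the tail outside $B_{2R_x}$ can carry the weighted mass (killing the tail with the crude bound forces the ball radius to grow like $R_x\sqrt{\log(1/R_x)}$, which ruins the covering constant). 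One needs the scale-uniform bound $\Phi\leq E_0$ furnished by monotonicity to estimate the tail dyadically, at the price of enlarging the ball by a fixed factor and shrinking the constant; this is precisely Lemma \ref{lemma:phisupbound}, after which your Vitali argument goes through verbatim.

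The genuine gap is in the rectifiability step, which is also where your route departs from the paper. You propose the Lin--Wang/Tian program: parabolic blow-up at $\nu$-a.e.\ point, stationarity of the limit, equality in monotonicity giving self-similarity, cone-splitting giving an $(n-4)$-plane of translation invariance, hence flat tangent measures. The hard part is hidden in the middle: the blow-up limit is \emph{not} in general a smooth time-independent connection on $\R^n$ --- energy can concentrate again, so the limit is an Uhlenbeck limit together with a new defect measure, and to run cone-splitting you must prove that this limit \emph{measure} is stationary and obeys a monotonicity formula (the analogue of Lin's theorem for harmonic maps, or Tian's analysis of blow-up loci), plus manage the gauge/scale diagonalization you yourself flag. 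None of this is established in the sketch, and it is substantially harder than the theorem being proved; note also that in the paper's own logic the flatness of tangent measures (Theorem \ref{thm:linwangblowup}) is proved \emph{after}, and \emph{using}, Theorem \ref{thm:sigma}. The paper's actual mechanism is much more elementary and avoids tangent measures entirely: it verifies Preiss's hypothesis directly, namely that the spherical density $\lim_{R\searrow 0}\nu(B_R(x))/R^{n-4}$ exists at $\mathcal{H}^{n-4}$-a.e.\ $x\in\Sigma$. The hypothesis (\ref{sigma:dstarfass}) enters only through Lemma \ref{lemma:antibubble}: it makes $\Phi_i(R,x,\cdot)$ nearly constant on the time interval $[\tau-R^2,\tau)$, so the Gaussian density of $\mu$ at the times $\tau_i$ matches the quantities defining $\Sigma$, and combined with monotonicity it makes $R\mapsto\lim_i\Phi_i(R,x,\tau_i)$ almost monotone, hence convergent (after discarding $\mathcal{H}^{n-4}$-null sets via Propositions \ref{prop:dumbliminf} and \ref{prop:xiprop}); finally the Laplace-transform Lemma \ref{lemma:laplacelemma} converts the Gaussian-weighted density into the ball density that Preiss's theorem requires --- a conversion your outline would also need if you did not first establish flat tangent measures. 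As written, the rectifiability portion of your proposal assumes the main difficulty rather than resolving it.
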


\begin{rmk} Afuni \cite{afuni} has independently obtained the finiteness of $\mathcal{H}^{n-4}(\Sigma).$ 
\end{rmk}


\begin{thm}\label{thm:uhlenbeck}
Let $\{A_i\}$ be as in Theorem \ref{thm:sigma}, satisfying (\ref{sigma:finitenergy}). 
Passing to a further subsequence, 
there exists 
a smooth connection $A_\infty$ on a bundle $E_\infty \to M \setminus \Sigma,$ and an exhaustion of open sets
$$U_1 \Subset \cdots \Subset U_i \Subset U_{i+1} \Subset \cdots \subset M \setminus \Sigma, \qquad \bigcup_{i = 1}^{\infty} U_i = M \setminus \Sigma,$$
together with bundle maps $u_i: \left. E \right|_{U_i} \to \left. E_\infty \right|_{U_i}$
(independent of time), as follows. For any sequence of times $t_i \nearrow \tau,$ we have
\begin{equation}\label{uhlenbeck:titotau}
u_i \left( A_i(t_i) \right)  \to A_\infty  \mbox{ in } C_{loc}^\infty(M \setminus \Sigma).
\end{equation}
Moreover, if
\begin{equation}\label{dstarftozero}
\int_0^\tau \!\!\!\!\! \int_{U_k} |D^*F_i |^2 \, dV dt \to 0 \mbox{ as } i \to \infty
\end{equation}
for each $k,$ then $A_\infty$ is a Yang-Mills connection, and 
\begin{equation}\label{uhlenbeck:uiaiconv}
u_i \left(A_i \right)  \to \underline{A}_\infty  \mbox{ in } C_{loc}^\infty \left( \left( M \setminus \Sigma \right) \times (0, \tau) \right),
\end{equation}
where $\underline{A}_\infty$ is the constant solution of (YM) equal to $A_\infty.$
\end{thm}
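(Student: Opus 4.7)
The plan is to combine $\epsilon$-regularity for Yang-Mills flow with Uhlenbeck's local Coulomb-gauge theorem and patching construction, then upgrade instantaneous convergence to parabolic convergence using the flow equation and gauge-invariance. By definition of $\Sigma$ in Theorem \ref{thm:sigma}, every $x \in M \setminus \Sigma$ admits $R > 0$ and $i_0 \in \N$ such that $\Phi(A_i; R, x, \tau - R^2) < \epsilon_0$ for all $i \geq i_0$. An $\epsilon$-regularity theorem for (YM)---of the kind developed earlier in the paper---then gives a pointwise bound $|F_i| \leq C(x)$ on the parabolic cylinder $B_{R/2}(x) \times \left[\tau - R^2/4 , \tau\right)$ for $i \geq i_0$. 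Parabolic Bando--Shi type estimates upgrade this to uniform bounds on $|\nabla^k F_i|$ for every $k$, on a slightly smaller cylinder.

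Next, exhaust $M \setminus \Sigma$ by open sets $U_k \Subset M \setminus \Sigma$ so that each $\overline{U_k}$ is covered by finitely many balls of the above type, and fix times $s_k \nearrow \tau$. On each good ball, Uhlenbeck's local Coulomb-gauge theorem, combined with the covariant $C^k$-bounds on $F_i$, produces local gauges in which $A_i(s_k)$ is $C^\infty$-bounded uniformly in $i$. The classical Uhlenbeck patching procedure, after passing to a subsequence in $i$, assembles these into a single bundle isomorphism $u_i^{(k)} : E|_{U_k} \to \tilde E^{(k)}|_{U_k}$, \emph{independent of} $t$, such that $u_i^{(k)}(A_i(s_k))$ is $C^\infty$-bounded on $U_k$. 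A further subsequence converges in $C^\infty_{\mathrm{loc}}(U_k)$ to a smooth limit $A_\infty^{(k)}$; the resulting limits are compatible across $k$ (up to gauge equivalence on overlaps $U_k \cap U_{k+1}$), so a diagonal argument assembles them into the global bundle $E_\infty \to M \setminus \Sigma$, the time-independent bundle maps $u_i : E|_{U_i} \to E_\infty|_{U_i}$, and the limit $A_\infty$.

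To obtain (\ref{uhlenbeck:titotau}) for an arbitrary sequence $t_i \nearrow \tau$, observe that the curvature bounds from Step 1 hold uniformly in $t \in \left[\tau - R^2/4, \tau\right)$, and the Uhlenbeck gauge-fixing at time $s_i$ propagates to $C^\infty_{\mathrm{loc}}$-bounds on $\tilde A_i(t) := u_i(A_i(t))$ for all such $t$ (by controlling $\tilde A_i(t) - \tilde A_i(s_i)$ via the integrated flow equation, whose right-hand side $D^*F$ is covariantly $C^k$-bounded). Then $\partial_t \tilde A_i$ is $C^\infty_{\mathrm{loc}}$-bounded, so $\tilde A_i(\cdot)$ is equicontinuous in time; combined with $\tilde A_i(s_i) \to A_\infty$ this yields $\tilde A_i(t_i) \to A_\infty$ in $C^\infty_{\mathrm{loc}}$. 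Under the additional hypothesis (\ref{dstarftozero}), the same argument applied on parabolic subdomains of $(M \setminus \Sigma) \times (0, \tau)$ gives $C^\infty_{\mathrm{loc}}$-bounds for $\tilde A_i$ there; the integral assumption forces $\partial_t \tilde A_i \to 0$ in $L^2_{\mathrm{loc}}$ and hence, by interpolation with the $C^\infty$-bounds, in $C^\infty_{\mathrm{loc}}$, so the limit is the constant solution $\underline A_\infty$ and $A_\infty$ is Yang-Mills.

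The main obstacle is the patching step: producing a single, \emph{time-independent} bundle isomorphism $u_i$ on each $U_k$ (not just local Coulomb gauges on balls) and ensuring compatibility across the exhaustion, so that a coherent limiting bundle $E_\infty \to M \setminus \Sigma$ and a single limit connection $A_\infty$ are well-defined. This is a parabolic adaptation of Uhlenbeck's classical patching, relying on the time-uniformity of the curvature bounds from Step 1 to keep the gauges usable as $t$ varies and to make the equicontinuity in time available.
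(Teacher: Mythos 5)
Your overall architecture (\(\epsilon\)-regularity plus Bando--Shi bounds, Uhlenbeck Coulomb gauges at fixed times, Donaldson--Kronheimer patching, then control of \(A_i(t)-A_i(\tau_i)\) through the flow equation) matches the paper's, but your very first step contains a genuine gap that hides the main structural difficulty of the theorem. You assert that every \(x\in M\setminus\Sigma\) admits \(R>0\) and \(i_0\) with \(\Phi(A_i;R,x,\tau-R^2)<\epsilon_0\) for \emph{all} \(i\geq i_0\). This is false as stated: \(\Sigma\) is defined in (\ref{sigma:sigmadef}) by a \(\liminf\) over \(i\), so \(x\notin\Sigma\) only yields a subsequence (depending on \(x\) and \(R\)) along which \(\Phi<\epsilon_0\); a full-tail bound would require a \(\limsup\) definition. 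Consequently the uniform curvature bounds you invoke on each good ball are only available along point-dependent subsequences, and reconciling these into a single subsequence together with an exhaustion of the complement of \emph{that subsequence's} singular set (which may strictly contain the original \(\Sigma\)) is precisely what the paper's proof spends its first half on: the partially ordered collection \(\mathcal{S}\) of sets satisfying (\ref{uhlenbeck:zorncondition}), the Zorn's-lemma maximal element \(W_*\), and the replacement of \(\Sigma\) by \(\Sigma^W\). A naive diagonal argument over balls does not settle this, because which points are regular changes each time you refine the subsequence; your proposal simply never confronts the issue.

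There is a second, smaller gap in the part assuming (\ref{dstarftozero}). You claim that ``the same argument applied on parabolic subdomains of \((M\setminus\Sigma)\times(0,\tau)\)'' gives \(C^\infty_{loc}\) bounds there, but the definition of \(\Sigma\) only constrains the flow at times approaching \(\tau\): for \(t\) bounded away from \(\tau\), \(x\notin\Sigma\) gives no small-energy hypothesis at \((x,t)\), so \(\epsilon\)-regularity cannot be applied directly and curvature could a priori concentrate at interior times off \(\Sigma\). The paper closes this by Lemma \ref{lemma:antibubble}: since \(\Xi_i\to0\) under (\ref{dstarftozero}), \(\Phi_i(R,x,\cdot)\) is almost constant in time, so the smallness known near \(t=\tau\) propagates backwards to all \(t\geq\tau_0\), yielding the bound (\ref{uhlenbeck:Kcurvbound}) on \([\tau_0,\tau)\); only then do the \(D^*F_i\) estimates and Lemma \ref{lemma:distancecomp} (with \(\delta_i\to0\) and \(|\tau_i-t|\) merely bounded) give (\ref{uhlenbeck:uiaiconv}). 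Your ``\(\partial_t\tilde A_i\to0\) in \(L^2\) plus interpolation'' step is fine in spirit, but it presupposes exactly these interior-time bounds, which you have not established. Your treatment of (\ref{uhlenbeck:titotau}) itself (equicontinuity in time from bounded \(D^*F_i\) near \(\tau\), with \(|t_i-s_i|\to0\)) is essentially the paper's Lemma \ref{lemma:distancecomp} and is fine once the exhaustion/subsequence issue above is repaired.
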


 

\begin{cor}[{\it Cf.} Hong-Tian \cite{hongtian}, Theorem A]\label{cor:uhlenbeck} Fix an arbitrary sequence $t_i \nearrow \infty,$ and let $A(t)$ be a smooth solution of (YM) on $M \times \LB 0 , \infty \right),$ with $M$ compact. 
After passing to a subsequence of $t_i,$ there exists a closed, $(n-4)$-rectifiable set $\Sigma \subset M,$ together with an Uhlenbeck limit $A_\infty,$ which is a smooth Yang-Mills connection on $E_\infty \to M \setminus \Sigma,$ such that
\begin{equation*}
u_i \left( A(t_i + t) \right)  \to \underline{A}_\infty  \mbox{ in } C_{loc}^\infty \left( \left( M \setminus \Sigma \right) \times \R \right).
\end{equation*}
\end{cor}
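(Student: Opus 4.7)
The plan is to apply Theorem \ref{thm:uhlenbeck} (combined with Theorem \ref{thm:sigma}) to time-translated subsequences on finite windows of growing length, and then extract a diagonal limit covering all of $\R$. The essential input from the compactness of $M$ is energy monotonicity: pairing (YM) with $\partial_t A = -D^*F$ and integrating yields
\[
\frac{d}{dt} \int_M |F_{A(t)}|^2 \, dV = -2 \int_M |D_A^* F_A|^2 \, dV \leq 0,
\]
so $\int_M |F_{A(t)}|^2 \, dV$ is bounded uniformly in $t$ and $\int_0^\infty \int_M |D^*F|^2 \, dV dt < \infty$.

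For each integer $k \geq 1$, define the shifted sequence $A_i^{(k)}(x, s) := A(x, t_i + s - k)$ on $M \times [0, 2k)$, valid once $t_i \geq k$. The uniform energy bound gives (\ref{sigma:finitenergy}), while
\[
\int_0^{2k} \int_M |D^*F_{A_i^{(k)}}|^2 \, dV ds = \int_{t_i - k}^{t_i + k} \int_M |D^*F_A|^2 \, dV dt \to 0
\]
as $i \to \infty$ (the tail of a convergent integral), verifying (\ref{dstarftozero}) and \emph{a fortiori} (\ref{sigma:dstarfass}). Applying Theorems \ref{thm:sigma} and \ref{thm:uhlenbeck} to this sequence, and passing to a subsequence, yields a closed $(n-4)$-rectifiable set $\Sigma^{(k)} \subset M$, a smooth Yang-Mills connection $A_\infty^{(k)}$ on a bundle $E_\infty^{(k)} \to M \setminus \Sigma^{(k)}$, and bundle maps $u_i^{(k)}$ such that $u_i^{(k)}(A(t_i + t)) \to \underline{A}_\infty^{(k)}$ in $C^\infty_{loc}((M \setminus \Sigma^{(k)}) \times (-k, k))$.

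A diagonal extraction, choosing the subsequence for window $k+1$ as a refinement of that for window $k$, produces a single subsequence. On the overlap $(-k, k)$ of two consecutive windows, both $\underline{A}_\infty^{(k)}$ and $\underline{A}_\infty^{(k+1)}$ are time-independent smooth Yang-Mills connections that arise as $C^\infty_{loc}$ limits of the same sequence of shifted connections (under different bundle identifications), hence they are gauge-equivalent on $M \setminus (\Sigma^{(k)} \cup \Sigma^{(k+1)})$ via a time-independent bundle map. Composing these isomorphisms successively builds a single bundle $E_\infty$, Yang-Mills limit $A_\infty$, gauge transformations $u_i$, and singular set $\Sigma$ (the appropriate limit of the $\Sigma^{(k)}$, still closed and $(n-4)$-rectifiable), on which $u_i(A(t_i + t)) \to \underline{A}_\infty$ in $C^\infty_{loc}((M \setminus \Sigma) \times \R)$.

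The main difficulty is not analytic but one of bookkeeping: reconciling the sequence of gauge identifications $u_i^{(k)}$ across varying time windows into a single consistent sequence $u_i$, and verifying that the limit singular set $\Sigma$ agrees with the one defined by (\ref{sigma:sigmadef}) directly for the original sequence $A(t_i)$. This amounts to a uniqueness statement for Uhlenbeck limits on a fixed domain, which is a standard consequence of Coulomb gauge-fixing. The substantive analytic content of the corollary is entirely contained in Theorems \ref{thm:sigma} and \ref{thm:uhlenbeck}.
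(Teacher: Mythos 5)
Your use of compactness is exactly the paper's: the energy identity gives $\int_0^\infty\!\int_M|D^*F|^2\,dV\,dt<\infty$, hence the tail over $[t_i-L,\,t_i+L]$ tends to zero for every fixed $L$, so Theorems \ref{thm:sigma}--\ref{thm:uhlenbeck} apply to the shifted solutions $A_i(t)=A(t_i+t)$ with (\ref{dstarftozero}) in force. Where you diverge is in passing from a finite window to all of $\R$: you rerun the theorems on windows $(-k,k)$, diagonalize in $k$, and then glue the window limits $(E_\infty^{(k)},A_\infty^{(k)},u_i^{(k)},\Sigma^{(k)})$. The paper applies Theorem \ref{thm:uhlenbeck} only once, on a single fixed window, and then notes that the same time-independent gauges $u_i$, limit $A_\infty$, and set $\Sigma$ already work on every bounded interval: by Lemma \ref{lemma:distancecomp} (with the curvature bound propagated by Lemma \ref{lemma:antibubble}) one has $\| u_i(A(t_i+t))-u_i(A(t_i+t'))\|_{C^k(U_k)}\leq C\,\delta_i^{(L)}\sqrt{|t-t'|}$ where $\delta_i^{(L)}=\bigl(\int_{t_i-L}^{t_i+L}\!\int_M|D^*F|^2\,dV\,dt\bigr)^{1/2}\to 0$ for each fixed $L$; this is precisely the argument leading to (\ref{uhlenbeck:seeyalater}), which the paper simply re-invokes, so no re-gauging or identification of bundles across windows is ever needed. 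Your route can be made rigorous---the gauge-equivalence of consecutive window limits is essentially Lemma \ref{lemma:dkunfuck}---but the step you dismiss as bookkeeping is where the actual work would sit: the $\Sigma^{(k)}$ are defined by (\ref{sigma:sigmadef}) relative to different final times and successively refined subsequences, so you must show (using Lemma \ref{lemma:antibubble} and $\delta_i\to 0$, which make the density $\Phi$ asymptotically time-independent) that they all coincide; otherwise your ``appropriate limit of the $\Sigma^{(k)}$'' is a countable union of closed sets, not obviously closed, and the maps $u_i^{(k)}$ are only reconciled off that union. The paper's one-shot extension argument buys the complete avoidance of these issues, and your proposal gains nothing over it, so you should replace the window/diagonal/gluing scheme by the quantitative time-extension estimate above.
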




\begin{rmk}
Rescaling arguments at the blowup set of (YM) have been carried out by several authors, including Schlatter \cite{schlatterlongtime} in dimension four, and Weinkove \cite{weinkovesingularityformation} for Type-I singularities in higher dimensions.
The next theorem follows from the refined blowup analysis due to Lin \cite{linharmonicannals}, Lin-Wang \cite{linwangharmonic}, and Tian \cite{tiancalibrated}, applied to (YM) in the K\"ahler context by Hong and Tian \cite{hongtian}. 
The recent paper by Kelleher and Streets \cite{kelleherstreets, kelleherstreetscorrigendum}, \S 6, also contains an essentially correct proof of the following result. 
\end{rmk}

\begin{thm}[\textit{Cf.} Kelleher-Streets \cite{kelleherstreets}, \S 6, Hong-Tian \cite{hongtian}, Theorem C, Lin-Wang \cite{linwangbook}, \S 8.5, Tian \cite{tiancalibrated}, \S 3-4]\label{thm:linwangblowup} Let $A_i$ be as in Theorem \ref{thm:sigma}, satisfying (\ref{sigma:finitenergy}) and (\ref{sigma:dstarfass}). 
For $\mathcal{H}^{n-4}$-a.e. $x_0 \in \Sigma,$ we may pass to a subsequence for which there exist $x_i \to x_0,$ $t_i \nearrow \tau,$ $\lambda_i \searrow 0,$ 
and gauge transformations $u_i,$ such that 
\begin{equation}
\lambda_i u_i \left( A_i \right) \left(x_i + \lambda_i x, t_i + \lambda_i^2 t  \right) \to \underline{B}(x) \mbox{ in } C^\infty_{loc} \left( \R^{n + 1} \right).
\end{equation}
Here, $\underline{B}$ is a constant solution of (YM) on $T_{x_0} M \cong \R^n,$
which is the product of a flat connection on $T_{x_0} \Sigma$ with a nontrivial finite-energy
Yang-Mills connection on $\left( T_{x_0} \Sigma\right)^{\perp} \cong \R^4.$
\end{thm}

\begin{rmk}
In the case that $T < \infty$ and $A_i(t) = A(t)$ is a single smooth solution of (YM) over $M \times \LB 0 , T \right),$ the assumption (\ref{sigma:dstarfass}) is automatically satisfied for $\tau = T,$ and we conclude from Theorem \ref{thm:sigma} that the finite-time singular set is rectifiable. The proof (based on Lemma \ref{lemma:laplacelemma}) also works for the case of harmonic map flow, where this result has not appeared in the literature. 
Based on \cite{lte}, 
however, it is natural to conjecture that for (YM), the result is vacuously true. 
\end{rmk}

\begin{conj} Let $A(t)$ be a smooth solution of (YM) on $M \times \LB 0, T \right),$ with $T < \infty,$ satisfying
\begin{equation*}
\sup_{0 \leq t < T} \int_M |F(t) |^2 \, dV  < \infty.
\end{equation*}
Then, at time $\tau = T,$ $\mathcal{H}^{n-4}(\Sigma) = 0$ and the defect measure (see (\ref{sigma:nudef}) below) vanishes identically. 
\end{conj}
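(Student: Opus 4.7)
The plan is to combine the $(n-4)$-rectifiability of $\Sigma$ from Theorem~\ref{thm:sigma} with a tangent-flow analysis that exploits two features special to the single-solution setting: the global energy identity and the uniform monotonicity of $\Phi$ across all scales and basepoints. For a smooth solution on $[0,T)$ with uniformly bounded Yang-Mills energy, integrating the identity $\tfrac{d}{dt} \int_M |F|^2 \, dV = -2 \int_M |D^*F|^2 \, dV$ yields
\[
\int_0^T \!\!\! \int_M |D^*F|^2 \, dV \, dt \leq \tfrac{1}{2} \int_M |F(0)|^2 \, dV < \infty,
\]
which is strictly stronger than the asymptotic assumption~(\ref{sigma:dstarfass}) imposed in Theorem~\ref{thm:sigma}.

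At each putative singular point $x \in \Sigma$, the plan is to perform a parabolic blowup $A_\lambda(y,s) = \lambda^{-1} A(x + \lambda^{-1} y, T + \lambda^{-2} s)$ and apply Theorem~\ref{thm:uhlenbeck} along a sequence $\lambda_i \to \infty$. The lower bound $\Theta(x) \geq \epsilon_0$ on the density forces $0 \in \Sigma_\infty$ for the blowup limit. Combined with the Hamilton-type monotonicity of $\Phi$, which in this setting is uniform in $\lambda$, one should obtain that the limit $A_\infty$ is a \emph{self-similar shrinker}, satisfying a soliton equation of the form $D^*F_\infty + \iota_X F_\infty = 0$ for the vector field $X$ generating parabolic dilations. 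Federer-style dimension reduction applied to $\Sigma_\infty$ would then split off lines at non-origin singular points, iteratively reducing the classification to Yang-Mills shrinkers on $\R^4$; by Uhlenbeck's removable singularities theorem, these extend smoothly, and the density at $0$ becomes an integer multiple of the minimal instanton energy. A standard covering argument would then force $\mathcal{H}^{n-4}(\Sigma) = 0$ and $\nu \equiv 0$.

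The main obstacle is that an unconditional classification of shrinking Yang-Mills solitons on $\R^n$ for $n \geq 5$ is presently unavailable; even the analogous classification in harmonic map flow requires delicate cylindricity arguments, and the gauge-theoretic version introduces additional difficulties from the moving gauge. A secondary obstacle lies in the passage to the soliton equation for the limit: even with the global $L^2$-bound on $D^*F$, deducing $D^*F_\infty \equiv 0$ requires $\lambda^{n-4} \!\int_{P_{R/\lambda}(x,T)} \!|D^*F|^2 \, dV\, dt \to 0$, which is not automatic from $L^1$-in-time integrability in dimension $n \geq 5$ and would demand a quantitative refinement of the monotonicity formula. In dimension $n = 4$, the condition $\mathcal{H}^{0}(\Sigma) = 0$ means $\Sigma$ is empty, consistent with the long-time existence result of~\cite{lte}, which provides the principal evidence for the conjecture.
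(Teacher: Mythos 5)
The statement you are trying to prove is stated in the paper as a \emph{conjecture}: the paper offers no proof (the remark preceding it only observes that (\ref{sigma:dstarfass}) holds automatically for a single solution, so that Theorem \ref{thm:sigma} gives $(n-4)$-rectifiability of $\Sigma$, which is strictly weaker than $\mathcal{H}^{n-4}(\Sigma)=0$ and $\nu \equiv 0$). So your proposal cannot be measured against a proof in the paper; it has to stand on its own, and as you yourself concede, it does not. The two obstacles you flag are genuine gaps, not technicalities. First, the step ``the blowup limit is a self-similar shrinker'' is not available: to derive $D^*F_\infty + \iota_X F_\infty = 0$ you need the scale-wise smallness $\lambda^{n-4}\int_{P_{R/\lambda}(x,T)}|D^*F|^2\,dV\,dt \to 0$ at $\mathcal{H}^{n-4}$-a.e.\ point of $\Sigma$, and the global bound $\int_0^T\!\int_M |D^*F|^2 < \infty$ coming from the energy identity only gives this along \emph{some} sequence of scales after a covering argument; moreover, passing the soliton (or even the static Yang-Mills) equation to the limit requires strong convergence precisely where the defect measure may live, which is the very thing the conjecture is supposed to rule out --- the paper's own blowup result (Theorem \ref{thm:linwangblowup}) only produces a \emph{static} finite-energy $\R^4$ Yang-Mills connection as a factor, and only at $\mathcal{H}^{n-4}$-a.e.\ point, after discarding the defect by Proposition \ref{prop:dumbliminf}, not by proving it vanishes. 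Second, even granting a shrinker/tangent-flow structure, the concluding covering argument needs an energy-quantization statement (density at singular points is bounded below by, or is an integer multiple of, a fixed quantum) together with a classification of shrinking Yang-Mills solitons on $\R^n$, $n\geq 5$; neither is known, and Uhlenbeck's removable singularity theorem only handles the four-dimensional static case.

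In short: your proposal is a reasonable research program consistent with what is known (and with the $n=4$ evidence from \cite{lte}), but it is not a proof, and the missing ingredients --- soliton classification in higher dimensions, quantization of the density, and control of the defect measure during the blowup rather than after it --- are exactly why the paper leaves this as a conjecture. If you want to extract something provable from your outline, the realistic targets are conditional statements, e.g.\ that $\nu \equiv 0$ follows if one assumes a positive lower bound on the energy of nontrivial blowup limits together with the scale-wise $D^*F$ decay at every point of $\Sigma$.
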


\section{Technical results}

\subsection{Hamilton's monotonicity formula} We recall the basic monotonicity formula for (YM) in higher dimensions, due to Hamilton \cite{hamiltonmonotonicity}.

For $x, y \in M$ and $R > 0,$ let
\begin{equation}\label{udef}
u_{R,x}(y) = \frac{R^{4-n}}{(4\pi)^{n/2}} \exp -\left(\frac{d \left( x, y \right)}{2R} \right)^2.
\end{equation}
Fix a smooth cutoff function $\varphi(r)$ supported on the unit interval, with $\varphi(r) \equiv 1$ on $\LB 0, 1/2 \RB,$ and let 
\begin{equation*}
\varphi_{x, \rho} (y) = \varphi \left( \frac{d(x,y)}{ \rho} \right)
\end{equation*}
for $\rho > 0.$
Also let
\begin{equation*}
\rho_1(x) = \min \LB \frac{\inj(M,x)}{2}, \frac{\sqrt{\tau}}{2}, 1 \RB
\end{equation*}
and
\begin{equation*}
\varphi_x(y) = \varphi_{x,\rho_1(x)} (y).
\end{equation*}
Given a solution $A = A(t)$ of (YM), define
\begin{equation}\label{phidef}
\Phi(A; R,x,t) =  \int_M |F_{A(t)}(y)|^2 u_{R,x}(y) \varphi^2_{x}(y) \, dV_y \qquad 
\end{equation}
\begin{equation}\label{xidef}
\qquad \Xi(A; R, x, t_1, t_2) = \int_{t_1}^{t_2} \!\!\!\!\! \int_M |D_{A(t)}^*F_{A(t)}(y)|^2 u_{R,x}(y) \varphi^2_{x}(y) \, dV_y dt.
\end{equation}
We shall typically suppress $A$ and write $F(t) = F_{A(t)},$ $D^*F(t) = D_{A(t)}^* F_{A(t)},$ etc.

Let $U \Subset U_1 \Subset M$ be compactly contained open submanifolds such that 
\begin{equation}\label{brho1}
B_{\rho_1}(x) \subset U_1
\end{equation}
for all $x \in U.$ In keeping with (\ref{sigma:finitenergy}), we shall assume
\begin{equation}\label{efiniteenergy}
\sup_{0 \leq t < \tau} \int_{U_1} \left| F(t) \right|^2 \, dV \leq E
\end{equation}
for a constant $E > 0.$

\begin{thm}[Hamilton \cite{hamiltonmonotonicity}]\label{thm:hamilton}
For $R_0 \geq R_1 \geq R_2 > 0,$
$R_1^2 \leq t < \tau,$ and any $x \in U,$
we have
\begin{equation}\label{hamilton:mainestimate}
\begin{split}
\Phi(R_2, x, t) & \leq e^{C_0  (R_1 - R_2) } \Phi(R_1, x, t - R_1^2 + R_2^2) + C_1 \left( R_1^2 - R_2^2 \right) E.
\end{split}
\end{equation}
Here $C_0$ and $C_1$ depend on the geometry of $M$ near $U.$ In particular, for any $\epsilon > 0,$ taking $R_0$ sufficiently small, we have
\begin{equation}\label{hamilton:epsilonestimate}
\Phi(R_2, x, t) \leq \left( 1 + \epsilon \right) \Phi(R_1, x, t - R_1^2 + R_2^2) + \epsilon.
\end{equation}
\end{thm}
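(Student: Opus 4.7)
My plan is to fix $t_0 := t + R_2^2$ (so that the time argument $t_0 - R^2$ equals $t - R_1^2 + R_2^2$ at $R = R_1$ and equals $t$ at $R = R_2$) and derive a differential inequality of the form $\Psi'(R) \geq -C_0\,\Psi(R) - 2C_1 R \cdot E$ on $0 < R \leq R_0$, where $\Psi(R) := \Phi(R, x, t_0 - R^2)$. Integrating via Gr\"onwall from $R_2$ to $R_1$ then yields (\ref{hamilton:mainestimate}) after absorbing a uniformly bounded factor $e^{C_0(R_1-R_2)}$ into the constant $C_1$. The $\epsilon$-version (\ref{hamilton:epsilonestimate}) follows by choosing $R_0$ small enough that both $e^{C_0 R_0} \leq 1 + \epsilon$ and $C_1 R_0^2 E \leq \epsilon$.

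To establish the differential inequality, I would first differentiate $\Psi(R)$ under the integral using $\partial_R F(\cdot, t_0 - R^2) = -2R\,\partial_t F$ together with (YM), $\partial_t F = -DD^*F$. This expresses $\Psi'(R)$ as a constant multiple of $R \int \langle F, DD^*F\rangle u_{R,x} \varphi_x^2$ plus $\int |F|^2 \partial_R u_{R,x} \cdot \varphi_x^2$. Integration by parts on the first integrand produces a pointwise $|D^*F|^2$ contribution together with a cross term proportional to $\int \langle D^*F, F \intprod \nabla(u_{R,x}\varphi_x^2)\rangle$. The essential step, which is Hamilton's observation, is that the factor $R^{4-n}$ in (\ref{udef}) is chosen precisely so that on flat $\R^n$ with $\varphi \equiv 1$, the weight-derivative term combines with the cross term through completion of squares to give the exact nonnegative identity
\begin{equation*}
\Psi'(R) = \frac{2}{R} \int_{\R^n} \left| R^2 D^*F + \tfrac{1}{2}(y - x)\intprod F \right|^2 u_{R,x} \, dV_y \geq 0,
\end{equation*}
proving monotonicity in the Euclidean case.

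On a general $M$, the remaining task is to bound the resulting error terms. The geometric error arises because $u_{R,x}$ is only an approximate backward heat kernel: the defect $\partial_R u_{R,x} + R \Delta_y u_{R,x} + \tfrac{n-4}{R} u_{R,x}$, which vanishes on $\R^n$, picks up corrections controlled by the Riemann tensor of $M$ and the Hessian of $d(x,\cdot)^2$. Since $\rho_1 \leq \inj(M,x)/2$ and $\rho_1 \leq 1$, these are uniformly bounded on $B_{\rho_1}(x)$ and contribute at most $C_0 \Psi(R)$. The cutoff error, coming from $\nabla \varphi_x$ and $\Delta \varphi_x$, is supported on the annulus $\{\rho_1/2 \leq d(x,y) \leq \rho_1\}$, where $u_{R,x} \leq C R^{4-n} \exp(-\rho_1^2/(16R^2))$ decays faster than any power of $R$; using (\ref{efiniteenergy}), this contribution is bounded crudely by $C_1 R \cdot E$ for $R \leq R_0$ small.

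The main obstacle I anticipate is the algebraic identity in the Euclidean case: verifying that $|F|^2 \partial_R u_{R,x}$ and the integration-by-parts cross term collapse exactly into a single pointwise square (after absorbing the $|D^*F|^2$ contribution) is the core of Hamilton's original computation in \cite{hamiltonmonotonicity}. Once this is in hand, the curvature and cutoff corrections follow from routine perturbative estimates exploiting smallness of $R$ and the uniform energy bound (\ref{efiniteenergy}). I expect the author either to cite Hamilton for the flat-space identity and merely sketch the curved-space modifications in the compactly supported setting, or to rederive the identity in a self-contained way by writing $u_{R,x}$ in normal coordinates at $x$.
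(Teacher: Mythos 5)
The paper itself contains no derivation of this theorem: its ``proof'' is a one-line citation to the case $\gamma=1$ of Theorem 5.7 of \cite{oliveirawaldron}, where the localized monotonicity formula with the cutoff $\varphi_x$ is established. So there is nothing in the paper to compare with step by step; what you have written is the standard Hamilton-type derivation that underlies the cited result, and it is structurally sound. Setting $t_0=t+R_2^2$, proving $\Psi'(R)\geq -C_0\Psi(R)-2C_1RE$ for $\Psi(R)=\Phi(R,x,t_0-R^2)$ on $(0,R_0]$, and integrating the resulting ODE does give exactly (\ref{hamilton:mainestimate}) after absorbing the bounded factor $e^{C_0(R_1-R_2)}$ into $C_1$, and (\ref{hamilton:epsilonestimate}) follows as you say (note your $R_0$ then depends on $E$ as well as the local geometry, which is consistent with how the estimate is used in the paper). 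Your identification of the two error sources --- the defect of $u_{R,x}$ as an approximate backward heat kernel inside the injectivity radius, absorbed into $C_0\Psi$, and the cutoff terms supported in $\{\rho_1/2\leq d(x,y)\leq\rho_1\}$, where $u_{R,x}\leq CR^{4-n}e^{-\rho_1^2/16R^2}$ decays super-polynomially and (\ref{efiniteenergy}) gives a crude bound $\leq C_1RE$ --- is the correct way to localize Hamilton's compact-manifold formula \cite{hamiltonmonotonicity}. One cosmetic slip: your displayed Euclidean identity has the wrong parabolic homogeneity. Since $\Psi$ is scale-invariant, $\Psi'(R)$ must scale like $R^{-1}$, whereas $\frac{2}{R}\int\bigl|R^2D^*F+\tfrac12(y-x)\intprod F\bigr|^2u_{R,x}\,dV$ scales like $R$; up to sign and constant conventions the perfect square should enter as
\begin{equation*}
\Psi'(R) \;=\; c\,R\int_{\R^n}\Bigl|D^*F+\frac{(y-x)\intprod F}{2R^2}\Bigr|^2u_{R,x}\,dV_y ,
\end{equation*}
equivalently with prefactor $c/R^{3}$ in your normalization. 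This does not affect the argument, since only nonnegativity of the flat-space term is used; the only genuinely nontrivial input you defer --- the completion-of-squares computation --- is indeed Hamilton's, and is carried out in the localized form in the reference the paper cites.
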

\begin{proof} This version of Hamilton's formula corresponds to the case $\gamma =1$ in Theorem 5.7 
of \cite{oliveirawaldron}.
\end{proof}

\subsection{Basic estimates for Yang-Mills flow} This section adapts to general dimension several basic results for Yang-Mills flow in dimension four, from \cite{instantons}, \S 3 and \cite{lte}, \S 3. These estimates have not appeared in the literature in this level of detail.

Let $U \Subset U_1 \Subset M$ satisfy (\ref{brho1}) as above.

\begin{lemma}\label{lemma:basicepsreg} 
Let $E, E_0 > 0,$ and $k \in \N.$ There exists a constant $\epsilon_0 > 0,$ depending on $E_0$ and $n,$\footnote{
We find this formulation of Lemma \ref{lemma:basicepsreg} to be more natural than the corresponding version in which $\epsilon_0$ depends only on $n$ (see Theorem 6.4 
of \cite{oliveirawaldron}). In the proof of Theorems \ref{thm:sigma}-\ref{thm:uhlenbeck}, the dependence on $E_0$ can easily be removed.} as well as $R_0 >0,$ depending on $E, k,$ and the geometry of $M$ near $U,$ as follows.

Let $A(t)$ be solution of (YM) on $M \times \LB 0, T \right),$ satisfying (\ref{efiniteenergy}). Assume that for some $0 < R < R_0,$  $x \in U, $ and $R^2 \leq t_0 \leq T,$ there hold
\begin{equation}\label{basicepsreg:E0e0assn}
\Phi \left( 2R, x, t_0 - R^2 \right) \leq E_0, \qquad
\Phi \left( R, x, t_0 - R^2 \right) < \epsilon_0.
\end{equation}
Then, for $t_0 - \frac{R^2}{2} \leq t < t_0,$ we have
\begin{equation}\label{basicepsreg:4}
\| F(t)\|_{C^0 \left( B_{R/2} \right)} \leq \frac{C_n}{R^2}.
\end{equation}
Letting
\begin{equation}\label{basicepsreg:fdstarfassn}
\begin{split}
\sup_{t_0 - R^2 \leq t < t_0} R^{4 - n} \int_{B_R(x)} |F(t)|^2 \, dV = \epsilon, \qquad
R^{4-n} \int_{t_0 - R^2}^{t_0} \!\! \int_{B_R(x)} \left| D^*F \right|^2 \, dV dt = \delta^2 
\end{split}
\end{equation}
then for $t_0 - \frac{R^2}{4} \leq t < t_0,$ we also have
\begin{equation}\label{basicepsreg:derivests}
\begin{split}
\| \nabla^{(k)} F(t) \|_{C^0\left( B_{R / 4}(x) \right)} \leq C_{k,n} R^{-2-k} \sqrt{\epsilon} , \qquad
\|\nabla^{(k)} D^*F(t)\|_{C^0 \left( B_{R/4}(x) \right)} \leq C_{k,n} R^{-3-k} \, \delta.
\end{split}
\end{equation}
\end{lemma}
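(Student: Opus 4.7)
\emph{Proof proposal.} The plan is to prove (\ref{basicepsreg:4}) by combining Hamilton's monotonicity with parabolic Moser iteration on the Bochner inequality for $|F|^2$, and then to derive (\ref{basicepsreg:derivests}) by a standard parabolic bootstrap in a Coulomb gauge.

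First I would use the monotonicity formula (\ref{hamilton:epsilonestimate}) to propagate the smallness hypothesis $\Phi(R, x, t_0 - R^2) < \epsilon_0$ to every base-point $(y, s)$ in the parabolic subcylinder $B_{R/2}(x) \times [t_0 - R^2/2, t_0)$ and every scale $r \leq cR$. Concretely, since $d(x,y) \leq R/2$ and $s - r^2 \geq t_0 - R^2$, a direct pointwise comparison of the Gaussian weights $u_{r,y}$ and $u_{R,x}$, followed by (\ref{hamilton:epsilonestimate}) applied at $(y,s)$, yields a bound of the form
\[
r^{4-n}\int_{B_r(y)} |F(s)|^2\, dV \leq C_n \bigl(\epsilon_0 + E_0 \cdot \mathrm{err}(R_0)\bigr),
\]
where $\mathrm{err}(R_0) \to 0$ as $R_0 \searrow 0$. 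It is exactly here that the dependence of $\epsilon_0$ on $E_0$ enters: we need the product of $E_0$ with the additive error in (\ref{hamilton:epsilonestimate}) to remain below a universal threshold.

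Next I would invoke a Bochner-type inequality of the form $(\partial_t - \Delta)|F|^2 \leq -2|\nabla F|^2 + C_n(|F|^3 + |F|^2)$, analogous to the one used in \cite{instantons}, \S 3. Given the scale-invariant smallness just established, a standard parabolic Moser iteration on cylinders $P_r(y,s) = B_r(y) \times (s - r^2, s]$ produces the pointwise estimate
\[
|F(y,s)|^2 \leq C_n \, r^{-(n+2)} \int_{P_r(y,s)} |F|^2 \, dV \, dt,
\]
which upon combining with the integrated bound from the previous step at $r \sim R$ yields (\ref{basicepsreg:4}). For (\ref{basicepsreg:derivests}), once (\ref{basicepsreg:4}) holds I would fix a Coulomb/Uhlenbeck gauge on $B_{R/2}(x)$, in which (YM) becomes a strictly parabolic quasilinear system with $\|A\|_{C^0} \lesssim R^{-1}$. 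Standard parabolic $L^p$ and Schauder estimates applied iteratively to the equations satisfied by $\nabla^{(k)} F$ and $\nabla^{(k)} D^*F$ then give the stated bounds with the correct scaling in $R$, picking up the factors $\sqrt{\epsilon}$ and $\delta$ respectively from the hypotheses (\ref{basicepsreg:fdstarfassn}).

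The main obstacle is the bookkeeping in the first step: reconciling the multiplicative-plus-additive form of (\ref{hamilton:epsilonestimate}) with the need for scale-invariant smallness at arbitrarily small $r$ across all basepoints of $B_{R/2}(x)$, and ensuring the comparison of Gaussian weights $u_{r,y}$ vs.\ $u_{R,x}$ loses only a universal constant. The footnote indicates that within the proofs of Theorems \ref{thm:sigma}--\ref{thm:uhlenbeck} the $E_0$ dependence can be removed, presumably because the monotonicity is iterated over dyadic scales and $E_0$ gets absorbed into a universal constant once $R_0$ is chosen small enough; but it seems cleanest to retain the dependence here and relocate this adjustment to the applications.
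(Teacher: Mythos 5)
Your first half is in the same spirit as the paper, but the paper does not redo that work: the $C^0$ bound (\ref{basicepsreg:4}) is obtained by directly citing the parabolic $\epsilon$-regularity theorem (Theorem 6.2 of \cite{oliveirawaldron}, or \cite{chenshenmonotonicity}, \cite{struwehm}), whose proof is exactly your ``propagate smallness via monotonicity, then Moser on the Bochner inequality'' scheme, including the point that the threshold depends on $E_0$ through the additive error in (\ref{hamilton:epsilonestimate}). So the bookkeeping you flag as the main obstacle is real but is packaged inside the cited theorem rather than carried out here.

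The genuine problem is in your derivation of (\ref{basicepsreg:derivests}). Fixing a Coulomb/Uhlenbeck gauge on $B_{R/2}(x)$ at a fixed time does \emph{not} make (YM) a strictly parabolic quasilinear system: the flow $\partial_t A = -D_A^*F_A$ is invariant under time-independent gauge transformations, so in any fixed gauge its spatial operator is $d_A^*d_A$ without the complementary $d_A d_A^*$ term, i.e.\ the system is only degenerate parabolic, and interior parabolic $L^p$/Schauder theory cannot be applied to it as you propose (making it parabolic requires a time-dependent DeTurck/Donaldson-type modification, which changes the solution). The paper avoids gauge fixing altogether: after rescaling $B_{R/2}$ to a unit ball so that (\ref{basicepsreg:4}) becomes $\sup|F|\leq C$ and (\ref{basicepsreg:fdstarfassn}) become unit-scale $L^2$ bounds, it invokes the gauge-invariant Moser iteration and bootstrapping of \cite{instantons}, Proposition 3.2, which works with the covariant heat-type equations satisfied by $F$ and $D^*F$ along the flow (e.g.\ $(\partial_t + \Delta_A)F = F\# F + \mathrm{Rm}\# F$ and its analogue for $D^*F$), together with the equations for their covariant derivatives. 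This route is what produces the factors $\sqrt{\epsilon}$ and $\delta$ cleanly, since Moser bounds $\nabla^{(k)}F$ and $\nabla^{(k)}D^*F$ pointwise by the spacetime $L^2$ norms of $F$ and $D^*F$ respectively, once the coefficients are controlled by the $C^0$ curvature bound. To repair your argument, replace the Coulomb-gauge Schauder step by this covariant bootstrap (or by any argument that first restores full parabolicity), keeping the rest of your outline.
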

\begin{proof} The estimate (\ref{basicepsreg:4}) follows from (\ref{basicepsreg:E0e0assn}) by the $\epsilon$-regularity theorem (see \cite{chenshenmonotonicity}, \cite{struwehm}, or Theorem 6.2 
 of \cite{oliveirawaldron} for this version).
Upon rescaling $B_{R/2}$ to a unit ball $\tilde{B}_1$ and letting $t_0 = 0,$ (\ref{basicepsreg:4}) becomes the uniform bound 
\begin{equation*}
\sup_{\tilde{B}_1 \times \LB -1, 0 \right)} |F(x,t)| \leq C.
\end{equation*}
The assumptions (\ref{basicepsreg:fdstarfassn}) become
\begin{equation*}
\sup_{-1 \leq t < 0} \| F (t) \|^2_{L^2(\tilde{B}_1)} = \epsilon, \qquad \int_{-1}^0 \!\! \int_{\tilde{B}_1} \left| D^*F \right|^2 \, dV dt = \delta^2.
\end{equation*}
The scale-invariant estimates (\ref{basicepsreg:derivests}) now follow from the standard Moser iteration and bootsrapping argument of \cite{instantons}, Proposition 3.2.
\end{proof}

\begin{lemma}\label{lemma:antibubble} 
For $0 < R < R_0,$  $0 \leq t_1 \leq t_2 < T,$ and $x \in U,$ assume 
\begin{equation}
\begin{split}
\sup_{t_1 \leq t \leq t_2} \Phi \left( 2R, x, t \right) \leq E_0, \quad \quad
\Xi \left(R, x , t_1, t_2 \right) \leq \xi^2 
\end{split}
\end{equation}
and put
$$\gamma = \xi \left(\xi + \frac{\sqrt{\left( t_2 - t_1 \right) E_0 }}{R} \right).$$
Then
\begin{equation}\label{antibubble:aest}
\left| \Phi \left(R,x, t_2 \right) - \Phi \left(R, x , t_1 \right) \right| \leq C_n \gamma.
\end{equation}
For $0 < \epsilon < \epsilon_0,$ if
\begin{equation}\label{antibubble:best}
\Phi \left(R, x, t_2 \right) + \gamma \leq \epsilon \quad \mbox{or} \quad \Phi \left(R, x, t_1 \right) + \gamma \leq \epsilon
\end{equation}
then, for $t_1 + \frac{3}{4}R^2 \leq t \leq t_2,$ 
there hold
\begin{equation}\label{antibubble:derivests}
\begin{split}
\| \nabla^{(k)} F(t) \|_{C^0\left( B_{R/4}(x) \right)} \leq C_{k,n} R^{-2-k} \sqrt{\epsilon} , \qquad
\|\nabla^{(k)} D^*F(t)\|_{C^0 \left( B_{R/4}(x) \right)} \leq C_{k,n} R^{-3-k} \, \xi.
\end{split}
\end{equation}
\end{lemma}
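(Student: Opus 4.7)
The strategy is to first establish (\ref{antibubble:aest}) by differentiating $\Phi(R,x,t)$ in $t$ along the flow, and then deduce (\ref{antibubble:derivests}) by invoking Lemma \ref{lemma:basicepsreg} on an intermediate scale, with Hamilton's formula (Theorem \ref{thm:hamilton}) supplying its small-energy hypothesis.

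For (\ref{antibubble:aest}), using $\partial_t A = -D^*F$ and the Bianchi identity, one computes
\begin{equation*}
\frac{d}{dt}\Phi(R,x,t) = -2\int_M u_{R,x}\varphi_x^2 \, |D^*F|^2 \, dV + 2\int_M \langle \iota_{\nabla(u_{R,x}\varphi_x^2)} F, \, D^*F\rangle \, dV.
\end{equation*}
Integrated over $[t_1,t_2]$, the first term has absolute value at most $2\Xi \leq 2\xi^2$. For the second (error) term, the key pointwise bound
\begin{equation*}
\frac{|\nabla u_{R,x}|^2}{u_{R,x}} \leq C_n \, R^{-2} \, u_{2R,x}
\end{equation*}
follows from (\ref{udef}) together with the elementary fact that $s \, e^{-3s}$ is bounded on $[0,\infty)$. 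The $|\nabla \varphi_x|$ contribution is supported where $u_{R,x}$ is exponentially small in $\rho_1/R$, and is absorbed once $R \leq R_0$. Two applications of Cauchy--Schwarz (in space, then in time) yield
\begin{equation*}
\left| \int_{t_1}^{t_2}\!\!\int_M \langle \iota_{\nabla(u\varphi^2)}F, D^*F\rangle \, dV dt \right| \leq \xi \cdot \left( \int_{t_1}^{t_2} \!\!\int_M \frac{|\nabla(u\varphi^2)|^2}{u\varphi^2} |F|^2 \, dV dt\right)^{1/2} \leq C_n \, \xi \cdot \frac{\sqrt{(t_2-t_1) E_0}}{R},
\end{equation*}
using $\sup_t \Phi(2R,x,t) \leq E_0$. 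Combining, $|\Phi(R,x,t_2) - \Phi(R,x,t_1)| \leq C_n \gamma$.

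For (\ref{antibubble:derivests}), assume $\Phi(R,x,t_2) + \gamma \leq \epsilon$ (the case at $t_1$ is symmetric). By (\ref{antibubble:aest}), $\Phi(R,x,t) \leq \epsilon + C_n \gamma \leq 2\epsilon$ for all $t \in [t_1,t_2]$. Fixing $t_0 \in [t_1 + \tfrac{3}{4}R^2, t_2]$ and applying Hamilton's estimate (\ref{hamilton:mainestimate}) to pass from scale $R$ to scale $R/2$, one obtains $\Phi(R/2, x, t_0 - R^2/4) < \epsilon_0$ and $\Phi(R, x, t_0 - R^2/4) \leq E_0$ provided $R_0$ (hence $R$) is chosen small enough that the exponential prefactor and the $C_1 R^2 E$ term are negligible. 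Lemma \ref{lemma:basicepsreg}, applied with its scale parameter equal to $R/2$ around $(x, t_0)$, then yields both estimates in (\ref{antibubble:derivests}); for the $D^*F$ bound we use that $u_{R,x}\varphi_x^2 \geq c_n R^{4-n}$ on $B_R(x)$, whence the parameter $\delta$ of (\ref{basicepsreg:fdstarfassn}) satisfies $\delta \leq C_n \, \xi$.

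The main technical point is the weighted Cauchy--Schwarz estimate on the error term in $\partial_t \Phi$, which is precisely where the second summand $\sqrt{(t_2-t_1)E_0}/R$ in $\gamma$ arises; this is where the assumption on $\Phi(2R,\cdot,\cdot)$ (rather than $\Phi(R,\cdot,\cdot)$) is actually used. Everything else is a repackaging of Hamilton's monotonicity together with the parabolic $\epsilon$-regularity provided by Lemma \ref{lemma:basicepsreg}.
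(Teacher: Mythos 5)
Your treatment of (\ref{antibubble:aest}) is exactly the paper's argument: differentiate $\Phi$ in time (equivalently, integrate the pointwise energy identity by parts against $u_{R,x}\varphi_x^2$), use the pointwise bound $|\nabla u_{R,x}|\le C R^{-1}\sqrt{u_{R,x}\,u_{2R,x}}$ as in (\ref{uxr2calc}), apply Cauchy--Schwarz in space and time to produce the two contributions $\xi^2$ and $\xi\sqrt{(t_2-t_1)E_0}/R$, and absorb the exponentially small $\nabla\varphi_x$ term using the smallness of $R_0$. This part is fine and identical in substance to the paper.

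For (\ref{antibubble:derivests}) you take a detour that is both unnecessary and, as written, does not deliver the stated conclusion. The paper's route is: (\ref{antibubble:aest}) plus (\ref{antibubble:best}) give $\Phi(R,x,t)\le C\epsilon$ for all $t\in[t_1,t_2]$, and then Lemma \ref{lemma:basicepsreg} is applied \emph{at scale $R$ itself}: for a conclusion time $t\ge t_1+\tfrac{3}{4}R^2$ one chooses the reference time $t_0\in(t,\,t+\tfrac{1}{4}R^2]$, so that the hypothesis time $t_0-R^2$ lies in $[t_1,t_2]$, where both requirements of (\ref{basicepsreg:E0e0assn}) --- $\Phi(2R,x,t_0-R^2)\le E_0$ (assumed) and $\Phi(R,x,t_0-R^2)\le C\epsilon<\epsilon_0$ --- are already available with no monotonicity step; this is precisely why the conclusion is stated on $B_{R/4}(x)$ and from time $t_1+\tfrac{3}{4}R^2$ onward. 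Your version instead halves the scale and uses Hamilton's formula (\ref{hamilton:mainestimate}) to manufacture smallness at scale $R/2$. Two concrete problems result: (i) Lemma \ref{lemma:basicepsreg} at scale $R/2$ centered at $x$ yields the estimates only on $B_{R/8}(x)$, not on $B_{R/4}(x)$ as claimed in (\ref{antibubble:derivests}); recovering $B_{R/4}$ would require a covering argument with comparability of the Gaussian weights at nearby centers, whereas the direct application at scale $R$ gives it immediately. (ii) The Hamilton comparison reaches back to $\Phi(R,x,t_0-R^2)$; for $t_0\in[t_1+\tfrac{3}{4}R^2,\,t_1+R^2)$ this time lies \emph{before} $t_1$, outside the window controlled by (\ref{antibubble:aest}) and by the hypothesis on $\Phi(2R,x,\cdot)$, so the smallness at scale $R/2$ is not justified on the initial portion of the claimed time range (and the requirement $R_1^2\le t$ in Theorem \ref{thm:hamilton} can also fail there when $t_1$ is small). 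Both defects disappear if you drop the Hamilton step and apply Lemma \ref{lemma:basicepsreg} at scale $R$; your remaining observation, that $u_{R,x}\varphi_x^2\ge c_nR^{4-n}$ on $B_R(x)$ bounds the quantities $\epsilon$ and $\delta$ of (\ref{basicepsreg:fdstarfassn}) by $C\epsilon$ and $C\xi$ respectively, is correct and is exactly what is needed to conclude.
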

\begin{proof} Integrating by parts once against $u_{R,x} \varphi_{x}^2$ in the pointwise energy identity (2.4) 
of \cite{lte}, and integrating in time, we obtain
\begin{equation}\label{antibubble:1}
\begin{split}
\Phi(R, x,t_2) - \Phi(R, x, t_1) & = -2 \Xi(R, x, t_1, t_2)  \\
& - 2 \int_{t_1}^{t_2} \!\!\!\! \int \LA D^*F^i, F_{ij} \RA \left( \nabla^j u_{R,x} \varphi^2_{x} + 2 u_{R,x} \varphi_{x} \nabla^j \varphi_{x} \right) \, dV dt.
\end{split}
\end{equation}
Letting $r = d(x, y),$ we have
\begin{equation}\label{uxr2calc}
\begin{split}
\left| \nabla u_{R,x} \right| = \frac{R^{4-n}}{(4\pi)^{n/2}} \frac{r}{2R^2} \, \exp -\left(\frac{ r}{2R} \right)^2 & = \frac{1}{R} \sqrt{u_{R,x}} \left( \frac{R^{4-n}}{(4\pi)^{n/2}} \right)^{1/2} \frac{r}{2R} \, \exp -\frac{ r^2}{8R^2} \\
& \leq \frac{C}{R} \sqrt{u_{R,x}} \sqrt{u_{2R,x}} \frac{r}{R} \, \exp \left( -\frac{ r^2}{16R^2} \right) \\
& \leq \frac{C}{R} \sqrt{u_{R,x}} \sqrt{u_{2R,x}}.
\end{split}
\end{equation}
We may therefore apply H\"older's inequality to estimate
\begin{equation}\label{antibubble:2}
\begin{split}
\left| \int_{t_1}^{t_2} \!\!\!\!\! \int \LA D^*F^i, F_{ij} \RA \nabla^j u_{R,x} \varphi^2_{x}  \, dV dt \right| & \leq \frac{C}{R} \sqrt{\Xi(R, x, t_1, t_2)} \sqrt{\int_{t_1}^{t_2} \Phi(2R, x, t) \, dt} \\
& \leq C \frac{\xi \sqrt{\left( t_2 - t_1 \right) E_0}}{R}.
\end{split}
\end{equation}
Next, we estimate 
\begin{equation}\label{antibubble:3}
\begin{split}
& \left| \int_{t_1}^{t_2} \!\!\!\!\! \int \LA D^*F^i, F_{ij} \RA u_{R,x} \varphi_{x} \nabla^j \varphi_{x}  \, dV dt \right| \\
& \qquad\qquad\qquad\qquad \qquad\qquad \leq C \sqrt{\Xi(R, x, t_1, t_2)} \sqrt{ \int_{t_1}^{t_2} \!\!\!\!\! \int \left| F \right|^2 u_{R,x} \left| \nabla \varphi_{x} \right|^2 \, dV dt} \\
&\qquad\qquad\qquad\qquad \qquad\qquad \leq C \xi e^{-\frac{\rho_1(x)^2}{4R^2}}\rho_1(x)^{-1} \sqrt{\left( t_2 - t_1 \right) E_0}.
\end{split}
\end{equation}
We may assume that $R_0$ 
is sufficiently small that
$$e^{ -\frac{\rho_1(x)^2}{4R_0^2} } \leq \rho_1(x)$$
for all $x \in U.$ Then, inserting (\ref{antibubble:2}-\ref{antibubble:3}) into (\ref{antibubble:1}) yields (\ref{antibubble:aest}).

Under the assumption (\ref{antibubble:best}), we conclude from (\ref{antibubble:aest}) that
\begin{equation*}
\Phi(R,x, t) \leq C \epsilon
\end{equation*}
for $t_1 \leq t \leq t_2.$ The desired bounds (\ref{antibubble:derivests}) now follow from Lemma \ref{lemma:basicepsreg}.
\end{proof}

\begin{lemma}\label{lemma:distancecomp} Fix $0 < \tau_0 < \tau \leq T.$ Let $K, \delta > 0$ and $\tau_0 \leq t_i < \tau,$ for $i = 1,2,$ be such that 
\begin{equation}\label{distancecomp:deltatiassn}
\delta^2 | t_2 - t_1 | \leq K^2.
\end{equation}
Assume that
\begin{equation}
\sup_{\stackrel{x \in U_1}{0 \leq t < \tau}}\left| F(x,t) \right| \leq K
\end{equation}
and
\begin{equation}
\int_0^\tau \!\!\!\!\! \int_{U_1} |D^*F|^2 \, dV dt \leq \delta^2.
\end{equation}
Then 
\begin{equation}\label{distancecomp:linfty}
\|A(t_2) - A(t_1)\|_{C^0(U)} \leq C_{\ref{lemma:distancecomp}} \delta \sqrt{|t_2 - t_1|}.
\end{equation}
Fixing a reference connection $\nabla_{ref}$ on $E$ and defining the $C^k$ norms accordingly, for $k \in \N,$ we have
\begin{equation}\label{distancecomp:kbound}
\begin{split}
\|A(t_2) - A(t_1)\|_{C^k( U )} & \leq C_{\ref{lemma:distancecomp}} \delta \sqrt{| t_2 - t_1| }  \left(1 + \|A(t_1) \|^{k!}_{C^{k-1}\left( U\right) } \right).
\end{split}
\end{equation}
The constants $C_{\ref{lemma:distancecomp}}$ depend on $K, k, \tau_0,\nabla_{ref},$ and the geometry of $M$ near $U.$ 
\end{lemma}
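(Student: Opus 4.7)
The starting point is the flow equation itself, integrated in time: since $\partial_t A = -D^*F$, we have
\begin{equation*}
A(t_2) - A(t_1) = -\int_{t_1}^{t_2} D^*F(s) \, ds
\end{equation*}
as a section of $\Lambda^1 \otimes \End(E)$ over $U$. The whole task is to convert the global spacetime bound $\int\!\!\int |D^*F|^2 \leq \delta^2$ into $C^k$-in-space, integrated-in-time estimates on $D^*F$, which is possible because the pointwise bound $|F| \leq K$ activates the $\epsilon$-regularity machinery of Lemma \ref{lemma:basicepsreg}.

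For the $C^0$ bound (\ref{distancecomp:linfty}), I would first fix a small scale $R = R(K, \tau_0) > 0$ so that $R^2 \leq \tau_0$ and $CK^2 R^4 < \epsilon_0$, ensuring the hypotheses of Lemma \ref{lemma:basicepsreg} are satisfied uniformly on $U_1 \times [\tau_0, \tau)$. Applying the derivative bound (\ref{basicepsreg:derivests}) with $t_0 = s + R^2/4$ produces, for each $x \in U$ and $s \in [t_1,t_2]$,
\begin{equation*}
|D^*F(x,s)|^2 \leq C(K) R^{-(n+2)} \int_{s - 3R^2/4}^{s + R^2/4} \!\!\int_{B_R(x)} |D^*F|^2 \, dV \, dt.
\end{equation*}
Integrating in $s$ and swapping integrals by Fubini gives
\begin{equation*}
\sup_{x \in U} \int_{t_1}^{t_2} |D^*F(x,s)|^2 \, ds \leq C(K) \int_0^{\tau} \!\!\int_{U_1} |D^*F|^2 \, dV \, dt \leq C(K) \, \delta^2,
\end{equation*}
and Cauchy--Schwarz in time on the integral representation of $A(t_2) - A(t_1)$ then yields (\ref{distancecomp:linfty}).

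For the higher estimate (\ref{distancecomp:kbound}), I would induct on $k$ and rerun the same scheme with covariant derivatives. Lemma \ref{lemma:basicepsreg} already controls $\nabla^{A,k} D^*F$ in $C^0$ by the same local $L^2$-integral of $|D^*F|^2$, so the only new ingredient is the conversion from $\nabla^A$- to $\nabla^{ref}$-derivatives. Since $\nabla^A - \nabla^{ref}$ is the algebraic operator $[A - A_{ref}, \cdot]$, iterating $k$ times and applying Leibniz expresses $\nabla^{ref,k} D^*F$ as a sum of terms of the form $(\nabla^{ref,i_1}(A - A_{ref})) \cdots (\nabla^{ref,i_m}(A - A_{ref})) \cdot \nabla^{A,j} D^*F$ with $i_1 + \cdots + i_m + j \leq k$. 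To close the bootstrap, I use the inductive hypothesis at levels $j \leq k-1$ to control $\|A(t) - A(t_1)\|_{C^{j}(U)}$ uniformly for $t \in [t_1, t_2]$: the inductive estimate supplies $\|A(t) - A(t_1)\|_{C^j(U)} \leq C_j \, \delta \sqrt{|t-t_1|}\, (1 + \|A(t_1)\|_{C^{j-1}})^{j!}$, and the standing hypothesis $\delta^2 |t_2 - t_1| \leq K^2$ forces $\delta \sqrt{|t-t_1|} \leq K$, so the triangle inequality controls $\|A(t) - A_{ref}\|_{C^{k-1}(U)}$ in terms of $1 + \|A(t_1)\|_{C^{k-1}(U)}$ alone.

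The main obstacle is the combinatorial bookkeeping in this last step: tracking the polynomial dependence on $\|A(t_1)\|_{C^{k-1}}$ produced by $k$-fold commutator corrections in $\nabla^{ref,k}$, and verifying that the induction generates precisely the factor $(1 + \|A(t_1)\|_{C^{k-1}})^{k!}$ stated in (\ref{distancecomp:kbound}). The hypothesis $\delta^2|t_2-t_1| \leq K^2$ plays the crucial role of keeping the $C^{k-1}$-drift of $A(t) - A(t_1)$ bounded at every intermediate time, without which the bootstrap would fail; once this is secured, the remaining analysis is a routine combination of Lemma \ref{lemma:basicepsreg} with Fubini's theorem.
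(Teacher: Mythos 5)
Your proposal is correct and follows essentially the same route as the paper's proof: integrate $\partial_t A = -D^*F$ in time, use the $\epsilon$-regularity derivative estimates of Lemma \ref{lemma:basicepsreg} (with $|F|\leq K$ guaranteeing the smallness hypothesis at a fixed scale $R(K,\tau_0)$) to convert the spacetime $L^2$ bound on $D^*F$ into pointwise $C^0$ and $C^k$ bounds, combine with Cauchy--Schwarz and Fubini in time, and close the $C^k$ estimate by induction on $k$, using $\delta^2|t_2-t_1|\leq K^2$ to absorb the intermediate-time drift of $\|A(t)\|_{C^{k-1}}$ exactly as in (\ref{distancecomp:induction}), which indeed produces the exponent $k!$ from $k\cdot(k-1)!$. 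The only adjustment needed is that your parabolic window $[s-3R^2/4,\,s+R^2/4]$ looks forward in time and may leave $[0,\tau)$ when $s$ is within $R^2/4$ of $\tau$, where neither the curvature bound nor the $\delta^2$ bound is assumed; centering the window backward in time, as in (\ref{distancecomp:dstarfbound}), removes this issue without changing anything else.
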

\begin{proof} In this proof, the constant $C$ will have the dependence of $C_{\ref{lemma:distancecomp}}.$ We shall assume $t_1 \leq t_2,$ since the opposite case follows by a similar argument.

First note that by covering $U$ with finitely many balls and applying Lemma \ref{lemma:basicepsreg}, for any $\tau_0 \leq t < \tau,$ we may obtain an estimate
\begin{equation}\label{distancecomp:dstarfbound}
\|D^*F(t) \|_{C^0( U )} \leq C \|D^*F\|_{L^2 \left( U_1 \times \LB t - \tau_0, t \RB \right)}.
\end{equation}
To prove (\ref{distancecomp:linfty}), using (\ref{distancecomp:dstarfbound}) and H\"older's inequality, we calculate
\begin{equation}\label{omega1}
\begin{split}
\|A(t_2) - A(t_1)\|_{C^0 \left( U \right)} & \leq \int_{t_1}^{t_2} \|D^*F(t)\|_{C^0( U )} \, dt \\
& \leq C \int_{t_1}^{t_2} \|D^*F\|_{L^2 \left( U_1 \times \LB t - \tau_0, t \RB \right)} \, dt \\
& \leq C (t_2- t_1)^{1/2} \left( \int_{t_1}^{t_2} \|D^*F\|^2_{L^2\left( U_1 \times \LB t-\tau_0, t \RB \right)} \, dt \right)^{1/2} \\
& \leq C (t_2 - t_1)^{1/2} \left( \int_{t_1}^{t_2} \!\!\!\! \int_{t - \tau_0}^t \|D^*F(s) \|_{L^2\left( U_1 \right)}^2 \, ds dt \right)^{1/2}.
\end{split}
\end{equation}
The domain of integration
$$t - \tau_0 \leq s \leq t, \quad \quad t_1 \leq t \leq t_2$$
may be relaxed to
$$0 \leq s \leq \tau, \quad \quad s \leq t \leq s + \tau_0.$$ Then (\ref{omega1}) becomes
\begin{equation*}
\begin{split}
\|A(t_2) - A(t_1)\|_{C^0 \left( U \right)} & \leq C (t_2 - t_1)^{1/2} \tau_0^{1/2} \left( \int_{0}^{\tau} \|D^*F(s)\|_{L^2\left( U_1 \right)}^2 \, ds \right)^{1/2} \\
& \leq C (t_2 - t_1)^{1/2} \delta
\end{split}
\end{equation*}
which is (\ref{distancecomp:linfty}).

Next, we calculate as follows:
\begin{equation*}\label{derivcomparison}
\begin{split}
\partial_t \nabla_{ref} A & = - \nabla_{ref} D^*F \\
& = - \nabla_A D^*F + A \# D^*F, \\
\partial_t \nabla_{ref}^{(2)} A & = - \nabla_{ref}^{(2)} D^*F \\
& = - \nabla_A^{(2)} D^*F + A \# \nabla_A D^*F + \nabla_{ref}A \# D^*F + A \# A \# D^*F, \mbox{ etc.}
\end{split}
\end{equation*}
Continuing in this fashion, we obtain bounds
\begin{equation}\label{distancecomp:dtkbound}
\| \partial_t \nabla_{ref}^{(k)} A (t) \|_{C^0( U )} \leq C \left( 1 + \|A(t) \|^k_{C^{k-1} \left( U \right)} \right) \sum_{\ell = 0 }^{k} \| \nabla^{(\ell)} D^*F (t) \|_{C^0 \left( U \right)}
\end{equation}
for each $k \in \N.$ Integrating (\ref{distancecomp:dtkbound}) in time, and applying Lemma \ref{lemma:antibubble} and H\"older's inequality as above, we obtain
\begin{equation}\label{distancecomp:protobound}
\begin{split}
\|A(t_2) - A(t_1)\|_{C^k( U )} & \leq C \delta \sqrt{ t_2 - t_1 } \left(1 + \sup_{t_1 \leq t \leq t_2 } \| A(t) \|^{k}_{C^{k-1}\left( U \right) } \right)
\end{split}
\end{equation}
for $k \in \N.$

To obtain (\ref{distancecomp:kbound}) from (\ref{distancecomp:protobound}), we use induction. The base case $k = 0$ is (\ref{distancecomp:linfty}). Assuming that (\ref{distancecomp:kbound}) holds for $k - 1,$ for any $t_1 \leq t \leq t_2,$ we have 
\begin{equation}\label{distancecomp:induction}
\begin{split}
\| A(t) \|^{k}_{C^{k-1}\left( U \right)} & \leq \left( \|A(t_1)\|_{C^{k-1}\left( U \right)} + \|A(t) - A(t_1)\|_{C^{k-1}\left( U \right)} \right)^{k} \\
& \leq \left( \|A(t_1)\|_{C^{k-1}\left( U \right)} + C K \left( 1 + \|A(t_1)\|^{(k - 1)!}_{C^{k - 2}\left( U \right)} \right) \right)^{k}\\
& \leq C \left( 1 + \|A( t_1 )\|_{C^{k-1}\left( U \right)}^{k!} \right).
\end{split}
\end{equation}
We have used the induction hypothesis and (\ref{distancecomp:deltatiassn}) in the second line. Substituting (\ref{distancecomp:induction}) into (\ref{distancecomp:protobound}) gives (\ref{distancecomp:kbound}) for $k,$ completing the induction.
\end{proof}

\subsection{Weighted density for Preiss's Theorem}
This section contains a lemma that will allow us to appeal directly to Preiss's Rectifiability Theorem in the parabolic context.

\begin{lemma}\label{lemma:laplacelemma} Let $\mu$ be a locally finite measure on $M^n,$ and fix a positive integer $k.$ Given a point $x \in M,$ 
define the function
$$\phi(R) = \frac{1}{R^{k} } \int_M \exp -\left(\dfrac{d \left( x, y \right)}{2R} \right)^2  \varphi^2_{x}(y) \, d \mu $$
and suppose that $\phi(R)$ is bounded above. Then
\begin{equation}\label{twolimits}
\lim_{R \searrow 0} \frac{\mu(B_R(x))}{ R^{k}} = \frac{\lim_{R \searrow 0} \phi(R)}{2^k \Gamma \left( \frac{k}{2} + 1 \right) }.
\end{equation}
Here $\Gamma$ is the Euler gamma function. 
\end{lemma}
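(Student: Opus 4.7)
The plan is to recognize $R^{k}\phi(R)$ as a Laplace--Stieltjes transform of a push-forward of $\mu$, and then to invoke Karamata's Tauberian theorem in its local form to convert the asymptotic behavior of the transform at infinity into the desired asymptotic for $\mu(B_{r}(x))$ as $r\to 0+$.

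Let $T\colon M\to [0,\infty)$ be the map $T(y)=d(x,y)^{2}$, and let $\tilde\nu := T_{\ast}\bigl(\varphi_{x}^{2}\,d\mu\bigr)$ denote the push-forward under $T$ of the finite positive Borel measure $\varphi_{x}^{2}\,d\mu$. Then $\tilde\nu$ is a positive Borel measure supported in $[0,\rho_{1}(x)^{2}]$, and since $\varphi_{x}\equiv 1$ on $B_{\rho_{1}(x)/2}(x)$,
$$\tilde\nu\bigl([0,t]\bigr) \;=\; \mu\bigl(B_{\sqrt{t}}(x)\bigr) \qquad \text{for all } 0\le t \le \rho_{1}(x)^{2}/4.$$
A direct change of variable $t = d(x,y)^{2}$ in the definition of $\phi$, with $s := 1/(4R^{2})$, then yields
$$R^{k}\phi(R) \;=\; \int_{0}^{\infty} e^{-st}\,d\tilde\nu(t) \;=:\; \hat{\tilde\nu}(s).$$
Since $R^{k} = 2^{-k}\,s^{-k/2}$, the hypothesis $\lim_{R\searrow 0}\phi(R)=L$ is equivalent to the asymptotic
$$\hat{\tilde\nu}(s) \;\sim\; \frac{L}{2^{k}}\,s^{-k/2} \qquad \text{as }s\to\infty.$$

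The final step is Karamata's Tauberian theorem in its local form: for a positive Borel measure $\tilde\nu$ on $[0,\infty)$, an asymptotic $\hat{\tilde\nu}(s)\sim c\,s^{-\rho}$ as $s\to\infty$ (with $c,\rho>0$) implies $\tilde\nu([0,t])\sim c\,t^{\rho}/\Gamma(\rho+1)$ as $t\to 0+$. Positivity of $\tilde\nu$ supplies the Tauberian hypothesis, while the assumed boundedness of $\phi(R)$ ensures that $\hat{\tilde\nu}(s)$ is finite for all sufficiently large $s$. Applying the theorem with $\rho=k/2$ and $c=L/2^{k}$ gives
$$\mu\bigl(B_{r}(x)\bigr) \;=\; \tilde\nu\bigl([0,r^{2}]\bigr) \;\sim\; \frac{L}{2^{k}\,\Gamma\bigl(\tfrac{k}{2}+1\bigr)}\, r^{k} \qquad (r\to 0+),$$
which is (\ref{twolimits}). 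The principal (and really only nontrivial) obstacle is the invocation of Karamata's theorem; the reduction to Laplace form is a routine calculation, and no further input from the geometry of $M$ is needed.
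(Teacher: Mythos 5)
Your proposal is correct, and it reaches the conclusion by a different route than the paper: you identify $R^k\phi(R)$ as the Laplace--Stieltjes transform $\hat{\tilde\nu}(s)$, $s=1/(4R^2)$, of the push-forward $\tilde\nu=T_*(\varphi_x^2\,d\mu)$ under $T(y)=d(x,y)^2$, note that $\tilde\nu([0,t])=\mu(B_{\sqrt t}(x))$ for small $t$, and then cite the local ($s\to\infty$, $t\to 0^+$) form of the Hardy--Littlewood--Karamata Tauberian theorem; the resulting constant $L/\bigl(2^k\Gamma(\tfrac k2+1)\bigr)$ agrees with (\ref{twolimits}). The paper does not cite a Tauberian theorem but in effect reproves it in this setting: Claims 1--4 approximate the indicator of the unit interval by finite sums $\sum a_i (x+1)^{-i}$ (Weierstrass approximation in $u=1/(x+1)$), observe via the Laplace identity of Claim 2 that such sums are averages of the Gaussian kernels defining $\phi$, and sandwich $\mu(B_R)$ accordingly, while the ``first part'' of the paper's proof is the easy Abelian direction used to pin down the constant --- which your cited theorem supplies automatically. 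So your approach buys brevity and conceptual transparency at the price of an external classical reference, whereas the paper's argument is self-contained (in keeping with its advertised ``elementary proof modulo Preiss''). Two small points to tidy up: the Karamata statement you quote assumes $c>0$, so the case $\lim_{R\searrow 0}\phi(R)=0$ should be handled separately, which is immediate since $\mu(B_r(x))\le \tilde\nu([0,r^2])\le e\,\hat{\tilde\nu}(r^{-2})=e\,(r/2)^k\phi(r/2)=o(r^k)$; and you only prove the direction ``the limit of $\phi$ exists $\Rightarrow$ the density exists and satisfies (\ref{twolimits}),'' which is the direction the paper actually uses later, the converse being the easy Abelian half of the same theorem (the paper's first part). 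The closed-versus-open ball discrepancy in $\tilde\nu([0,t])$ is immaterial in the limit.
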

\begin{proof} 
For simplicity, we will suppress the cutoff function $\varphi^2_{x}$ throughout the proof.

Define the increasing function
\begin{equation}
m(r) = \mu \left( B_r(x) \right).
\end{equation}
Since $\phi(r) \leq E_0 $ is bounded above, we clearly have
\begin{equation}\label{brbound}
0 \leq m(r) \leq e^{1/4} r^k \phi(r) \leq 2 E_0 r^k.
\end{equation}
For any $C^1$ radial function $f(r)$ with
\begin{equation}
|f(r)| + r |f'(r)| = O(r^{-k - 1}) \mbox{ as } r \to \infty
 \end{equation} 
we have
\begin{equation}\label{frformula}
\int \!\! f(r) \, d \mu = - \int_0^\infty \!\! f'(r) m(r) \, dr.
\end{equation}

We first assume that the limit on the LHS of (\ref{twolimits}) exists, so
\begin{equation}
m(r) \sim L r^k \mbox{ as } r \to 0
\end{equation}
for some $L > 0.$ Let $\psi(s) = \exp -s^2 / 4.$ Then, by (\ref{frformula}), we have
\begin{equation}
\begin{split}
\phi(R) & = \frac{1}{ R^{k} } \int_M \psi \left(\dfrac{r}{R} \right) \, d \mu  \\
& = - \int_0^\infty \psi'\left( \frac{r}{R} \right) \left( \frac{r}{R } \right)^k \frac{m(r)}{r^k}  \, \frac{dr}{R} \\
& = - \int_0^\infty \psi'\left( s \right) s^k \frac{m(Rs )}{(Rs)^k}  \, ds
\end{split}
\end{equation}
where $s = r / R.$ Hence
\begin{equation}\label{phipietc}
\begin{split}
\lim_{R \searrow 0} \phi(R) & = - L \int_0^\infty \psi'\left( s \right) s^k \, ds \\
& = k L \int_0^\infty \psi\left( s \right) s^{k - 1} \, ds \\
& = \frac{k L \left( 4 \pi \right)^{k/2}}{\omega_{k-1}}
\end{split}
\end{equation}
where $\omega_{k-1} = \mathrm{Vol}(S^{k-1}).$ 
Applying the formula for $\omega_{k-1}$ in 
(\ref{phipietc}), and rearranging, yields (\ref{twolimits}).

Next, we assume that the limit on the RHS of (\ref{twolimits}) exists, and show that the limit on the LHS exists using a Laplace-transform trick.

\vspace{2mm}

\noindent {\bf Claim 1.} Let $\chi(x)$ be the characteristic function of the unit interval $\LB 0, 1 \RB \subset \R_{\geq 0},$ and fix an integer $N_0 > \frac{k + 1}{2}.$ Given $\epsilon > 0,$ there exists an integer $N = N(\epsilon) > N_0$ and an approximating function $\tilde{\chi}_{\epsilon}(x),$ 
of the form
\begin{equation}\label{formofchitilde}
\tilde{\chi}_{\epsilon}(x) = \sum_{i = N_0}^N \frac{a_i }{(x + 1)^i}
\end{equation}
which 
satisfies
\begin{equation}\label{chiepsapprox}
\begin{split}
- \epsilon < \tilde{\chi}_{\epsilon}(x) & < 1 + \epsilon \quad  \left( 0 \leq x < \infty \right) \\
\left| \tilde{\chi}_{\epsilon}(x) - \chi(x) \right| & \leq \begin{cases} \epsilon & \left( 0 \leq x \leq 1 - \epsilon \right) \\
\dfrac{\epsilon}{x^{N_0}} & \left( 1 + \epsilon \leq x < \infty \right).
\end{cases}
\end{split}
\end{equation}

\begin{proof}[Proof of Claim 1] Let $u = \frac{1}{x + 1}.$ We use Weierstrass approximation on the unit interval in the $u$ variable, as follows. 
Let $0 \leq \sigma_\epsilon \leq 1$ be a continuous function satisfying
\begin{equation}
\sigma_\epsilon (u) = \begin{cases} 0 & \left(0 \leq u \leq \frac{1 - \epsilon}{2} \right) \\
1 & \left( \frac{1 + \epsilon}{2} \leq u \leq 1 \right).
\end{cases}
\end{equation}
We may let $p(u)$ be a polynomial satisfying
\begin{equation}
\left| p(u) - u^{-N_0} \sigma_\epsilon (u) \right| < \epsilon/2 \quad (0 \leq u \leq 1).
\end{equation}
Letting
$$\tilde{\chi}_{\epsilon}(u) = u^{N_0} p(u)$$
and substituting $u = \frac{1}{x + 1},$ we obtain a function $\tilde{\chi}_{\epsilon}(x)$ of the form (\ref{formofchitilde}) which satisfies (\ref{chiepsapprox}), as claimed.
\end{proof}

\vspace{2mm}

\noindent {\bf Claim 2.} Let
\begin{equation}\label{gepsdef}
g_\epsilon (\lambda) = - e^{-1/4\lambda^2} \lambda^{k} \sum_{i = N_0}^N \frac{a_i }{ \lambda^{2i + 1} 2^{2i-1} (i-1)! }.
\end{equation}
Then
\begin{equation*}
\tilde{\chi}_{\epsilon}(x) = \int_0^\infty g_\epsilon (\lambda) \frac{e^{-x / 4 \lambda^2}}{\lambda^{k} } \, d\lambda .
\end{equation*}
\begin{proof}[Proof of Claim 2]
We have the Laplace-transform identity
\begin{equation}
\frac{1}{\left( x + 1 \right)^i} = \int_{0}^\infty \frac{s^{i-1} e^{-s}}{(i-1)!}  e^{-xs} \, ds.
\end{equation}
Changing variables $s = \frac{1}{4 \lambda^2}$ yields the claim.
\end{proof}

\noindent {\bf Claim 3.} Let $x = r^2,$ and define
$$\chi_\epsilon(r) = \tilde{\chi_\epsilon}(r^2).$$
The limit
\begin{equation}
L_\epsilon = \lim_{R \searrow 0} R^{-k} \int \!\! \chi_\epsilon \left( \frac{r}{R} \right) \, d \mu
\end{equation}
exists, and is bounded independently of $\epsilon.$
\begin{proof}[Proof of Claim 3.] In the $r$ variable, the bounds (\ref{chiepsapprox}) become
\begin{equation}\label{chiepsrapprox}
\begin{split}
- \epsilon < \chi_{\epsilon}(r) & < 1 + \epsilon \quad  \left( 0 \leq r < \infty \right) \\
\left| \chi_{\epsilon}(r) - \chi(r) \right| & \leq \begin{cases} \epsilon & \left( 0 \leq r \leq 1 - \epsilon \right) \\
\dfrac{\epsilon}{r^{2 N_0}} & \left( 1 + \epsilon \leq r < \infty \right).
\end{cases}
\end{split}
\end{equation}
Since $2 N_0 > k + 1,$ the boundedness follows from 
(\ref{brbound}). To show that the limit exists for a given $\epsilon > 0,$ from Claim 2, we have 
\begin{equation}\label{claim3limit}
\begin{split}
R^{-k} \int \!\!\! \chi_\epsilon \left( \frac{r}{ R } \right) \, d \mu & = R^{-k} \int \!\!\!\!\! \int_0^\infty g_\epsilon (\lambda) \frac{e^{-r^2 / 4 R^2 \lambda^2}}{\lambda^{k} } \, d\lambda \, d \mu \\
& = \int_0^\infty \!\!\! g_\epsilon (\lambda) \int \frac{e^{-r^2 / 4 R^2 \lambda^2}}{(R\lambda)^{k} } \, d \mu \, d\lambda \\
& = \int \!\!\! g_\epsilon (\lambda) \phi(\lambda R) \, d\lambda.
\end{split}
\end{equation}
By assumption, $\phi(\cdot)$ is continuous at zero and bounded, while $g_\epsilon(\lambda)$ is absolutely integrable. Hence (\ref{claim3limit}) yields 
$$\lim_{R \to 0} R^{-k} \int \!\!\! \chi_\epsilon \left( \frac{r}{ R } \right) \, d \mu = \left( \lim_{s \searrow 0}\phi(s) \right) \int_0^\infty g_\epsilon ( \lambda)\, d \lambda.$$
This proves the claim.
\end{proof}

\noindent {\bf Claim 4.} We have
\begin{equation}\label{claim4}
\int \!\!\! \chi_\epsilon \left( \frac{r}{(1 - \epsilon) R} \right) \, d \mu - C \epsilon R^{k} \leq \mu(B_R) \leq \int \!\!\! \chi_\epsilon \left( \frac{r}{ (1 + \epsilon) R} \right) \, d \mu + C \epsilon R^{k}.
\end{equation}
\begin{proof}[Proof of Claim 4]
From (\ref{chiepsrapprox}), we have
\begin{equation}\label{chiepslower}
 \chi_\epsilon \left( \frac{r}{ (1 - \epsilon) R} \right) \leq \chi \left( \frac{r}{R} \right) +  \epsilon \left( \frac{ 2 R}{R + r } \right)^{2 N_0} 
\end{equation}
and
\begin{equation}\label{chiepsupper}
\chi \left( \frac{r}{R} \right) \leq \chi_\epsilon \left( \frac{r} { (1 + \epsilon) R} \right) + \epsilon \left( \frac{ 2 R}{R + r } \right)^{2 N_0}.
\end{equation}
From (\ref{brbound}) and (\ref{frformula}), we have
\begin{equation}\label{epsbound}
\begin{split}
\int \!\! \epsilon \left( \frac{ 2 R}{R + r } \right)^{2 N_0}  \, \!\! d \mu & \leq C \epsilon \int_0^{\infty} \frac{\left( 2 R\right)^{2 N_0 + k} }{(R + r)^{2 N_0 + 1} } \, dr \\
& \leq C \epsilon R^k.
\end{split}
\end{equation}
Integrating (\ref{chiepslower}) and (\ref{chiepsupper}) in $r$ and applying (\ref{epsbound}) yields the claim.
\end{proof}

To complete the proof of Lemma \ref{lemma:laplacelemma}, let $L_\epsilon$ be the limit in Claim 3. Dividing (\ref{claim4}) by $R^k$ and taking the limit as $R \searrow 0,$ we obtain
\begin{equation}\label{oneguy}
(1 - \epsilon)^k L_\epsilon - C \epsilon \leq \liminf_{R \searrow 0} \frac{\mu(B_R)}{R^k}
\end{equation}
and
\begin{equation}\label{twoguy}
\limsup_{R \searrow 0} \frac{\mu(B_R)}{R^k} \leq (1 + \epsilon)^k L_\epsilon + C \epsilon.
\end{equation}
Subtracting (\ref{oneguy}) from (\ref{twoguy}) yields
\begin{equation*}
\limsup_{R \searrow 0} \frac{\mu(B_R)}{R^k} - \liminf_{R \searrow 0} \frac{ \mu(B_R) }{R^k} \leq C \left( 1 + | L_\epsilon | \right) \epsilon.
\end{equation*}
Since $\epsilon > 0$ was arbitrary and $L_\epsilon$ is bounded, we conclude that the limit on the LHS of (\ref{twolimits}) exists.

By the first part of the proof, the two limits must again satisfy (\ref{twolimits}).
\end{proof}

\begin{lemma}\label{lemma:phisupbound} Let $\mu$ and $\phi(r)$ be as in Lemma \ref{lemma:laplacelemma}, and assume $\phi(r) \leq E_0$ for all $r > 0.$ If, for some $R > 0$ and $\epsilon \leq 1,$ we have
\begin{equation}\label{phisupbound:mRass}
m(R) = \mu(B_R) \leq \epsilon R^k
\end{equation}
then
\begin{equation}\label{phisupbound:est}
\phi \left( \epsilon^{1/2k} R \right) \leq C_{k,n} \left( \sqrt{\epsilon} + E_0 \exp \left( - \epsilon^{-1/{2k}} \right) \right).
\end{equation}
\end{lemma}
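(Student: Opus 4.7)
The plan is to decompose $\phi(\epsilon^{1/2k} R)$ into an inner integral over $B_R(x)$, where the Gaussian weight is close to $1$ but the measure of the ball is small by hypothesis (\ref{phisupbound:mRass}), and an outer integral on the complement, where the Gaussian decay is strong enough that the global bound $\phi(2R) \leq E_0$ suffices.

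Set $\rho = \epsilon^{1/2k} R$, so that $\rho \leq R$ and $(R/\rho)^k = \epsilon^{-1/2}$. Write $r = d(x,y)$ and split $\phi(\rho) = I_1 + I_2$, where $I_1$ and $I_2$ are the contributions to the weighted integral from $B_R(x)$ and its complement, respectively. For $I_1$, bounding the Gaussian trivially by $1$ and applying (\ref{phisupbound:mRass}) gives
\begin{equation*}
I_1 \leq \rho^{-k} \mu(B_R) \leq (R/\rho)^k \epsilon = \sqrt{\epsilon}.
\end{equation*}
For $I_2$, factor
\begin{equation*}
e^{-r^2/4\rho^2} = e^{-r^2 A} \cdot e^{-r^2/16 R^2}, \qquad A := \tfrac{1}{4\rho^2} - \tfrac{1}{16 R^2} > 0,
\end{equation*}
so that the second factor is exactly the kernel defining $\phi(2R)$. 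Since $A > 0$, the first factor is decreasing in $r$, and on $\{r \geq R\}$ is bounded by $e^{-R^2 A} = e^{-\epsilon^{-1/k}/4 + 1/16}$. Pulling this constant out and invoking $\phi(2R) \leq E_0$ yields
\begin{equation*}
I_2 \leq 2^k e^{1/16} \, E_0 \, \epsilon^{-1/2}\, e^{-\epsilon^{-1/k}/4}.
\end{equation*}

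To match the target exponent, I would write $\epsilon^{-1/k}/4 = \epsilon^{-1/k}/8 + \epsilon^{-1/k}/8$. Applying the elementary inequality $(y - 4)^2 \geq 0$ at $y = \epsilon^{-1/2k}$ gives $\epsilon^{-1/2k} \leq \epsilon^{-1/k}/8 + 2$, hence $e^{-\epsilon^{-1/k}/8} \leq e^2 \, e^{-\epsilon^{-1/2k}}$, which converts one half of the decay into the desired form. The other half absorbs the prefactor, since $\epsilon^{-1/2} e^{-\epsilon^{-1/k}/8}$ is uniformly bounded on $(0,1]$ by a constant depending only on $k$ (a routine one-variable calculus maximization, with the maximum attained at $\epsilon = (4k)^{-k}$). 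Combining the estimates for $I_1$ and $I_2$ gives (\ref{phisupbound:est}). The main obstacle is really just bookkeeping: the target exponent $\epsilon^{-1/2k}$ is the geometric mean of $1$ and $\epsilon^{-1/k}$, so the crude decay $e^{-\epsilon^{-1/k}/4}$ extracted from the Gaussian must be apportioned into two roles---absorbing the unavoidable prefactor $(R/\rho)^k = \epsilon^{-1/2}$ and producing the stated $e^{-\epsilon^{-1/2k}}$---and no deeper input beyond (\ref{phisupbound:mRass}) and the pointwise bound $\phi(2R) \leq E_0$ is needed.
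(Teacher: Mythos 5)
Your proof is correct, and it takes a genuinely different (and somewhat more self-contained) route than the paper's. The paper first passes to the distribution function $m(r)=\mu(B_r(x))$ and uses the integration-by-parts identity (\ref{frformula}) together with the growth bound (\ref{brbound}) (i.e.\ $m(r)\leq 2E_0 r^k$, itself a consequence of $\phi\leq E_0$); it then splits the resulting one-dimensional integral at the scales $R_1=\alpha R$ and $R$, bounding the middle range by $\alpha^{-k}\epsilon$ via monotonicity of $m$ and the tail by $E_0\int_{\alpha^{-1}}^\infty |\psi'(s)|s^k\,ds$, which yields $\phi(\alpha R)\leq C(\alpha^{-k}\epsilon + E_0\exp(-\alpha^{-2}/5))$ and the statement upon setting $\alpha=\epsilon^{1/2k}$. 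You instead split the weighted integral spatially at radius $R$ only: the inner piece is controlled trivially by the hypothesis $\mu(B_R)\leq \epsilon R^k$, and for the outer piece you factor the Gaussian kernel at scale $\rho=\epsilon^{1/2k}R$ against the kernel at scale $2R$, so that the tail is dominated directly by $2^k\epsilon^{-1/2}e^{1/16}e^{-\epsilon^{-1/k}/4}\,\phi(2R)\leq C_k E_0\,\epsilon^{-1/2}e^{-\epsilon^{-1/k}/4}$, with no need for (\ref{frformula}) or (\ref{brbound}); the cutoff $\varphi_x^2$ rides along harmlessly in both pieces since it is scale-independent. Your final bookkeeping ($\epsilon^{-1/2k}\leq \epsilon^{-1/k}/8+2$ and boundedness of $\epsilon^{-1/2}e^{-\epsilon^{-1/k}/8}$ on $(0,1]$) is valid and converts the stronger intermediate decay into the stated form (\ref{phisupbound:est}); the paper's route likewise produces a stronger exponent $\exp(-\epsilon^{-1/k}/5)$ before stating the weaker bound. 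What each approach buys: yours is more elementary and uses only the single value $\phi(2R)\leq E_0$ rather than the full growth bound on $m$, while the paper's version reuses the machinery already set up for Lemma \ref{lemma:laplacelemma}, which keeps the two proofs uniform in style.
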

\begin{proof}  Let $0 \leq \alpha \leq 1,$ and put $R_1 = \alpha R.$ Notice from (\ref{phisupbound:mRass}) that for $R_1 \leq r \leq R,$ we have
\begin{equation}
\begin{split}
\frac{m(r)}{r^k} = \left(\frac{R}{r} \right)^k R^{-k} m(r) & \leq \left(\frac{R}{R_1} \right)^k R^{-k} m(R) \\
 & \leq \alpha^{-k} \epsilon.
\end{split}
\end{equation}
Let $\psi(x) = \exp -x^2 / 4$ as above. Then, by (\ref{frformula}), we have 
\begin{equation}
\begin{split}
\phi(R_1) & \leq \frac{1}{ R_1^{k} } \int_M \psi \left(\dfrac{r}{R_1} \right) \, d \mu  \\
& = \frac{1}{ R_1^{k} } \int_{B_{R_1}} \psi \left(\dfrac{r}{R_1} \right) \, d \mu - \frac{\psi(R_1) m(R_1) }{R_1^k} \\
& \qquad - \left( \int_{R_1}^R + \int_R^\infty \right) \psi'\left( \frac{r}{R_1} \right) \left( \frac{r}{R_1 } \right)^k \frac{m(r)}{r^k} \, \frac{dr}{R_1} \\
& \leq C \alpha^{-k} \epsilon \left( 1 + \int_{1}^{\alpha^{-1}} \left| \psi'\left( s \right) \right| s^k ds \right) + C E_0 \int_{\alpha^{-1}}^\infty \left| \psi'\left( s \right) \right| s^k ds
\end{split}
\end{equation}
where we have let $s = r / R_1.$ This gives
\begin{equation}
\phi(R_1) \leq C \left( \alpha^{-k} \epsilon + E_0 \exp - \left( \alpha^{-2} / 5 \right) \right).
\end{equation}
Letting $\alpha = \epsilon^{1/2k}$ yields the claim.
\end{proof}

\subsection{Hausdorff-measure estimates} We collect here the Hausdorff estimates which will be used in the proof of Theorems \ref{thm:sigma}-\ref{thm:uhlenbeck}.

For a sequence of solutions $\{A_i\}$ as in Theorem \ref{thm:sigma}, let $\Phi_i$ and $\Xi_i$ denote the quantities (\ref{phidef}-\ref{xidef}) corresponding to $A = A_i.$ Write
\begin{equation}\label{liminfdef}
\underline{\Phi} = \liminf_{i \to \infty} \Phi_i, \qquad \underline{\Xi} = \liminf_{i \to \infty} \Xi_i.
\end{equation}

\begin{prop}\label{prop:hn-4} For $\Sigma$ as in Theorem \ref{thm:sigma}, we have $\mathcal{H}^{n-4}(\Sigma \cap U) < \infty$ for any $U \Subset M.$
\end{prop}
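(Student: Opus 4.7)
The strategy is a ball-packing argument driven by Hamilton's monotonicity formula and the uniform energy bound (\ref{efiniteenergy}).

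The first step is to use Theorem \ref{thm:hamilton} to convert the small-scale condition defining $\Sigma$ into a lower bound on $\underline{\Phi}(R,x) := \liminf_i \Phi_i(R, x, \tau-R^2)$ that holds at all sufficiently small scales $R$, uniformly in $x \in \Sigma \cap U$. Applying (\ref{hamilton:mainestimate}) to each $A_i$ with $R_2 = R$, a smaller fixed parameter $R_1$, and $t = \tau - R^2$ (so that $t - R_1^2 + R_2^2 = \tau - R_1^2$), and taking $\liminf_i$ followed by $\liminf_R$, the defining condition $\liminf_{R \searrow 0} \underline\Phi(R,x) \geq \epsilon_0$ yields
\[
\underline\Phi(R, x) \geq \tfrac{\epsilon_0}{2} \quad \text{for every } x \in \Sigma \cap U \text{ and every } 0 < R \leq R_0,
\]
where $R_0$ depends only on $\epsilon_0,$ $E$, and the geometry of $M$ near $U$.

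The second step is a packing estimate at scale $\rho \leq R_0$. Given any $2\rho$-separated collection $\{x_k\}_{k=1}^N \subset \Sigma \cap U$, each index set $I_k := \{i : \Phi_i(\rho, x_k, \tau - \rho^2) \geq \epsilon_0/4\}$ is cofinite by the bound just established; hence $\bigcap_{k=1}^N I_k$ is nonempty, and selecting any $i^*$ in the intersection yields
\[
\frac{N\epsilon_0}{4} \leq \sum_{k=1}^N \Phi_{i^*}(\rho, x_k, \tau - \rho^2) = \int_{U_1} |F_{i^*}(\tau - \rho^2)|^2 \sum_{k=1}^N u_{\rho, x_k}(y)\, \varphi_{x_k}^2(y) \, dV_y,
\]
using that each $\varphi_{x_k}$ is supported in $U_1$ by (\ref{brho1}).

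The crucial estimate is a pointwise bound on the Gaussian sum. Because the $x_k$ are $2\rho$-separated and $\rho$ is small on the scale of the injectivity radius near $U$, a disjoint-ball volume comparison in geodesic normal coordinates shows that for any $y \in M$ and $m \geq 0$ the number of centers in $B_{(m+1)\rho}(y)$ is at most $C_n(m+1)^n$. Combined with the Gaussian decay, this gives
\[
\sum_{k=1}^N u_{\rho, x_k}(y) \leq \frac{\rho^{4-n}}{(4\pi)^{n/2}} \sum_{m=0}^\infty C_n (m+1)^n e^{-m^2/4} \leq C'_n \rho^{4-n}.
\]
Together with (\ref{efiniteenergy}) this produces $N \rho^{n-4} \leq C(n, E, \epsilon_0)$. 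Choosing $\{B_\rho(x_k)\}$ to be a \emph{maximal} disjoint collection of such balls, maximality forces $\Sigma \cap U \subset \bigcup_k B_{2\rho}(x_k)$, and letting $\rho \searrow 0$ yields $\mathcal{H}^{n-4}(\Sigma \cap U) \leq 2^{n-4} C(n, E, \epsilon_0) < \infty$.

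The main subtlety is that a priori the indices $i$ witnessing the density bound at different centers $x_k$ need not coincide; the cofinite-intersection argument resolves this and relies crucially on the use of $\liminf_i$ (rather than $\limsup_i$) in the definition of $\Sigma$. The Gaussian packing estimate is otherwise routine, requiring only the uniform positive lower bound on $\rho_1(x)$ for $x \in U$ to ensure that the cutoffs $\varphi_{x_k}$ are supported in $U_1$ where the energy bound applies.
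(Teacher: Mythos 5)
Your proof is correct, and after the common first step it takes a genuinely different route from the paper's. The opening move is identical: Hamilton's monotonicity formula promotes the defining condition of $\Sigma$ to the uniform bound $\underline{\Phi}(R,x,\tau-R^2)\ge \epsilon_0/2$ for all $x\in\Sigma\cap U$ and all $0<R\le R_0$, which is exactly (\ref{hn-4:eps/2}) (only a wording slip here: in (\ref{hamilton:mainestimate}) one has $R_1\ge R_2$, so the fixed scale at which the lower bound is deduced is the \emph{larger} radius, with the hypothesis used at small $R_2$). From there the paper applies the contrapositive of Lemma \ref{lemma:phisupbound} to convert the Gaussian-weighted bound into an unweighted one, $\liminf_i\int_{B_R(x)}|F_i(\tau-R_1^2)|^2\,dV\ge \epsilon R^{n-4}$ at the enlarged radius $R=R_1\epsilon^{-1/(2(n-4))}$, and then runs a Vitali $5R$-covering, interchanging $\sum_k$ with $\liminf_i$ by superadditivity of the liminf. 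You instead keep the Gaussian weights, bound $\sum_k u_{\rho,x_k}(y)\le C_n\rho^{4-n}$ pointwise for $2\rho$-separated centers, and handle the index interchange by the cofinite-intersection trick (legitimate because a maximal disjoint family of $\rho$-balls centered in the precompact set $U$ is finite); this bypasses Lemma \ref{lemma:phisupbound} and the radius enlargement entirely and yields the explicit bound $\mathcal{H}^{n-4}(\Sigma\cap U)\le C(n)E/\epsilon_0$, and your cofinite-intersection device correctly exploits the $\liminf_i$ in (\ref{sigma:sigmadef}) just as the paper's Fatou-type step does. What the paper's detour buys is reusability: the same Lemma \ref{lemma:phisupbound}-plus-Vitali template is applied again verbatim in Propositions \ref{prop:dumbliminf} and \ref{prop:xiprop}, where the relevant densities involve liminf integrands and point-dependent radii $R_x$, for which a fixed-scale Gaussian-sum packing is less convenient. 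Two details you should state explicitly but which do not affect correctness: the count of centers in $B_{(m+1)\rho}(y)$ by $C_n(m+1)^n$ needs $\rho$ small and uses that all centers lie in the compact closure of $U$ (for $m$ so large that $(m+1)\rho$ exceeds the scale of bounded geometry, bound the count by the total number of centers, which is $O(\rho^{-n})$, and absorb it with the Gaussian factor); and the evaluation time $\tau-\rho^2$ must respect the constraint $R_1^2\le t<\tau$ in Theorem \ref{thm:hamilton}, which holds after shrinking $R_0$.
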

\begin{proof} 
Without loss of generality, we replace $\Sigma$ by $\Sigma \cap U$ in the proof. By (\ref{sigma:finitenergy}), we may assume (\ref{efiniteenergy}).

Notice, from Hamilton's monotonicity formula (\ref{hamilton:epsilonestimate}), that there exists $R_0 > 0$ such that for any $ 0 < R_1 < R_0,$ and for every $x \in \Sigma,$ we have
\begin{equation}\label{hn-4:eps/2}
\underline{\Phi} \left(R_1, x, \tau - R_1^2 \right) \geq \frac{\epsilon_0}{2}.
\end{equation}

Let $\epsilon > 0$ be such that the RHS of (\ref{phisupbound:est}) is less than $\epsilon_0/2,$ and let
\begin{equation}\label{hn-4:r'def}
R = \frac{R_1}{ \epsilon^{1/2 (n-4)} }.
\end{equation}
The contrapositive of Lemma \ref{lemma:phisupbound} and (\ref{hn-4:eps/2}) imply 
\begin{equation}\label{hn-4:r'est}
\liminf_{i \to \infty} \int_{B_{R}(x)} \left| F_i \left(\tau - R_1^2 \right) \right|^2 \, dV \geq \epsilon {R}^{n-4}
\end{equation}
for any $x \in \Sigma.$

We now estimate the Hausdorff measure by the argument of Nakajima. 
By the Vitali covering lemma, 
we may let $\{x_k \} \subset \Sigma $ be such that $ \Sigma \subset \cup B_{5R}(x_k)$ and $B_{R}(x_k) \cap B_{R}(x_\ell) = \emptyset$ for $k \neq \ell.$ 
We then have
\begin{equation*}
\begin{split}
\mathcal{H}^{n-4}_{5R}(\Sigma) \leq \sum_k (5R)^{n-4} & \leq \frac{5^{n-4}}{\epsilon} \sum_k \liminf_{i \to \infty} \int_{B_{R}(x_k)} \left| F_i \left( \tau - R_1^2 \right) \right|^2 \, dV \\
& \leq \frac{5^{n-4}}{\epsilon} \liminf_{i \to \infty} \sum_k  \int_{B_{R}(x_k)} \left| F_i \left( \tau - R_1^2 \right) \right|^2 \, dV \\
& \leq \frac{5^{n-4}}{\epsilon} \liminf_{i \to \infty} \int_M \left| F_i \left( \tau - R_1^2 \right) \right|^2 \, dV \\
& \leq \frac{5^{n-4} E}{\epsilon}.
\end{split}
\end{equation*}
Since $R$ tends to zero with $R_1$ (by (\ref{hn-4:r'def})), we are done.
\end{proof}

\begin{prop}\label{prop:dumbliminf} For $\tau_i \nearrow \tau,$ let $f(x) = \liminf_{i \to \infty} \left| F_i \left(x, \tau_i \right) \right|^2.$ Then
\begin{equation}\label{dumbliminf:est}
\lim_{R \searrow 0}  \int_M f(y) u_{R,x}(y) \varphi^2_{x}(y) \, dV_y = 0
\end{equation}
for all $x \in M \setminus \Sigma$ and for $\mathcal{H}^{n-4}$-a.e. $x \in \Sigma.$
\end{prop}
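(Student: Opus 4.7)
I will treat the two assertions separately: for $x \in M \setminus \Sigma$ I will obtain a local $L^\infty$-bound on $f$ from $\epsilon$-regularity applied at the boundary time $\tau$, and then estimate the weighted integral directly. For $\mathcal{H}^{n-4}$-a.e. $x \in \Sigma$ I will identify the weighted integral, via Lemma \ref{lemma:laplacelemma}, with the Gaussian $(n-4)$-density at $x$ of the measure $\nu := f\,dV_M$, and exploit the fact that $\nu$ is absolutely continuous with respect to Lebesgue measure while $\Sigma$ is $\mathcal{L}^n$-null by Proposition \ref{prop:hn-4}.

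As a preliminary I would verify that $\phi(R) := (4\pi)^{n/2} \int_M f\, u_{R,x} \varphi_x^2 \, dV$ is bounded above for $R \in (0, R_0)$. By Fatou's lemma,
\begin{equation*}
\phi(R) \leq (4\pi)^{n/2} \liminf_{i \to \infty} \Phi_i(R, x, \tau_i),
\end{equation*}
and combining Hamilton's monotonicity (Theorem \ref{thm:hamilton}) with the pointwise bound $u_{R_0,x} \leq C R_0^{4-n}$ and (\ref{efiniteenergy}) gives a uniform upper bound. The same argument produces a uniform bound $\Phi_i(2R, x, \tau - R^2) \leq E_0$ for small $R$, fixing the constant $\epsilon_0$ of Lemma \ref{lemma:basicepsreg}, which we may assume is compatible with the $\epsilon_0$ used in (\ref{sigma:sigmadef}).

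For $x \in M \setminus \Sigma$, the definition of $\Sigma$ furnishes some $R_* \in (0, R_0)$ and a subsequence $\{i_k\}$ with $\Phi_{i_k}(R_*, x, \tau - R_*^2) < \epsilon_0$. I apply Lemma \ref{lemma:basicepsreg} with $t_0 = T = \tau$ (which is admissible since the lemma requires only $t_0 \leq T$ and its conclusion is stated for $t < t_0$) to obtain
\begin{equation*}
\|F_{i_k}(t)\|_{C^0(B_{R_*/2}(x))} \leq C_n R_*^{-2} \qquad \text{for all } t \in [\tau - R_*^2/2, \tau).
\end{equation*}
For $k$ large, $\tau_{i_k}$ lies in this interval, so $f(y) \leq C_n^2 R_*^{-4}$ on $B_{R_*/2}(x)$. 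Splitting the weighted integral into the contribution from $B_{R_*/2}(x)$ (bounded by $C R^4 R_*^{-4}$ using $\int u_{R,x}\,dV \leq C R^4$) and from its complement (bounded by $C R^{4-n} \exp(-R_*^2/(16 R^2))\,\|f\|_{L^1}$), both tend to zero as $R \searrow 0$.

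For $\mathcal{H}^{n-4}$-a.e. $x \in \Sigma$: the measure $\nu = f \cdot dV_M$ is locally finite (by Fatou) and $\nu \ll \mathcal{L}^n$. Since $\mathcal{H}^{n-4}(\Sigma \cap U) < \infty$ by Proposition \ref{prop:hn-4}, we have $\mathcal{L}^n(\Sigma) = 0$ and hence $\nu(\Sigma) = 0$. A standard $5r$-covering estimate (cf.\ Mattila, \emph{Geometry of Sets and Measures in Euclidean Spaces}, Theorem 6.9) gives, for each $c > 0$,
\begin{equation*}
\mathcal{H}^{n-4}\bigl(\{\,y \in \Sigma : \limsup_{R \searrow 0} \nu(B_R(y))/R^{n-4} \geq c\,\}\bigr) \leq C_n\, c^{-1}\, \nu(\Sigma) = 0,
\end{equation*}
so that $\lim_{R \searrow 0} \nu(B_R(x))/R^{n-4} = 0$ for $\mathcal{H}^{n-4}$-a.e. $x \in \Sigma$. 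Combined with the boundedness of $\phi$, Lemma \ref{lemma:laplacelemma} (with $k = n - 4$ and $\mu = \nu$) then yields $\lim_{R \searrow 0} \phi(R) = 0$, which is (\ref{dumbliminf:est}).

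The main obstacle is the interplay between the two regimes: in the regular case, one must realize that Lemma \ref{lemma:basicepsreg} remains valid at the boundary time $t_0 = \tau$, giving uniform control on $F_{i_k}(\tau_{i_k})$ without having to compare $\Phi_i$ at times $\tau - R^2$ and $\tau_i - R^2$ (which is delicate without the $L^2$-decay hypothesis (\ref{sigma:dstarfass})). In the singular case, the absolute continuity of $\nu$ against Lebesgue measure---combined with the purely measure-theoretic upper-density estimate---forces the ``good part'' $f\,dV$ to carry vanishing $(n-4)$-density almost everywhere on $\Sigma$, isolating any singular concentration in the complementary defect measure.
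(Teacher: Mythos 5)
Your argument is correct, and on $M \setminus \Sigma$ it is the same as the paper's (the paper simply notes that $\epsilon$-regularity makes $f$ locally bounded off $\Sigma$; your splitting of the weighted integral into the ball $B_{R_*/2}(x)$ and the exponentially suppressed tail is the omitted computation, and your observation that Lemma \ref{lemma:basicepsreg} applies with $t_0=T=\tau$ is exactly the right point). On $\Sigma$ you take a genuinely different, though closely parallel, route. The paper defines the bad sets $S_j$ directly in terms of the Gaussian-weighted quantity, uses the \emph{contrapositive of Lemma \ref{lemma:phisupbound}} to convert a lower bound on the Gaussian density into a lower bound $R_x^{4-n}\int_{B_{R_x}} f \geq \epsilon_j$ at some scale $R_x<\delta$, and then runs a Vitali $5r$-cover by hand, finishing with dominated convergence applied to $\chi_{\Sigma_\delta} f \to 0$ (which is where $\mathcal{L}^n(\Sigma)=0$ enters). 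You instead work at the level of the Radon measure $\nu = f\,dV$: you cite the standard upper-density estimate (whose proof is the same $5r$-covering argument, using outer regularity of $\nu$ and $\nu(\Sigma)=0$) to kill the Euclidean $(n-4)$-density of $\nu$ at $\mathcal{H}^{n-4}$-a.e. point of $\Sigma$, and only then bridge back to the Gaussian-weighted quantity via the first (dominated-convergence) half of Lemma \ref{lemma:laplacelemma} with $L=0$. That bridge is legitimate—the proof of that direction of Lemma \ref{lemma:laplacelemma} is written for $L>0$ but works verbatim for $L=0$—though you could avoid even this small extension by applying Lemma \ref{lemma:phisupbound} directly (small ball density at all small scales forces small $\phi$), which is how the paper organizes the implication. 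What the two approaches buy: yours is shorter and outsources the covering step to a standard theorem, at the cost of invoking Lemma \ref{lemma:laplacelemma} (which the paper reserves for the Preiss argument) and of the minor $L=0$ caveat; the paper's is self-contained, quantitative at each stage, and uses only Lemma \ref{lemma:phisupbound}, which is also the pattern reused in Proposition \ref{prop:xiprop}.
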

\begin{proof} First, note from the $\epsilon$-regularity theorem that $f(x)$ is locally bounded on $M \setminus \Sigma.$ Hence the limit (\ref{dumbliminf:est}) is zero if $x \in M \setminus \Sigma.$

We may again replace $\Sigma$ by $\Sigma \cap U$ for an open subset $U \Subset M.$ Let $S_j\subset \Sigma$ be the set of $x$ such that
\begin{equation}\label{dumbliminf:rest}
\limsup_{R_1 \searrow 0} \int_M f(y) u_{R_1, x}(y) \varphi^2_{x, \rho_1}(y) \, dV_y \geq \frac{1}{j}.
\end{equation}
We will show that $\mathcal{H}^{n-4} (S_j) = 0.$

Let $\delta > 0.$ Define $\epsilon_j > 0$ such that the RHS of (\ref{phisupbound:est}), with $\epsilon = \epsilon_j,$ is equal to $1/2j.$ By the contrapositive of Lemma \ref{lemma:phisupbound}, (\ref{dumbliminf:est}) implies that for each $x \in S_j,$ there exists $0 < R_x < \delta$ such that 
\begin{equation}\label{dumbliminf:rxest}
R_x^{4-n} \int_{B_{R_x}(x)} f(y) \, dV_y \geq \epsilon_j.
\end{equation}
Let $\{x_k \} \subset \Sigma$ be such that $ \Sigma \subset \cup B_{5R_{x_k}}(x_k)$ and $B_{R_{x_k}}(x_k) \cap B_{R_{x_\ell}}(x_\ell) = \emptyset$ for $k \neq \ell.$ By (\ref{dumbliminf:rxest}), we have
\begin{equation}
\begin{split}
\mathcal{H}^{n-4}_{5\delta}(S_j) \leq 5^{n-4}\sum_k R_{x_k}^{n-4} & \leq \frac{5^{n-4}}{\epsilon_j} \sum_k \int_{B_{R_{x_k}}(x_k)} f(y) \, dV_y \\
& \leq \frac{5^{n-4}}{\epsilon_j} \int_{\Sigma_\delta} f(y) \, dV_y = \frac{5^{n-4}}{\epsilon_j} \int_M \chi_{_{\Sigma_\delta}} f(y) \, dV_y.
\end{split}
\end{equation}
Since $\Sigma$ is closed and of Lebesgue measure zero, $\chi_{_{\Sigma_\delta}} f(y) \to 0$ pointwise almost everywhere as $\delta \to 0.$ Hence, 
the last integral tends to zero by the dominated convergence theorem. This completes the proof that $\mathcal{H}^{n-4}(S_j) = 0.$

The set of $x$ satisfying (\ref{dumbliminf:est}) is the complement of the union of $S_j,$ for $j = 1, \ldots , \infty,$ and therefore has full $\mathcal{H}^{n-4}$-measure in $\Sigma.$
\end{proof}

\begin{prop}\label{prop:xiprop} Assuming (\ref{sigma:dstarfass}), we have
\begin{equation}\label{xiprop:est}
\lim_{\sigma \nearrow \tau} \limsup_{R \searrow 0} \underline{\Xi} \left(R, x, \sigma, \tau \right) = 0
\end{equation}
for all $x \in M \setminus \Sigma$ and for $\mathcal{H}^{n-4}$-a.e. $x \in \Sigma.$ Here $\underline{\Xi}$ is defined by (\ref{liminfdef}).
\end{prop}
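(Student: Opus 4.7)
The plan is to mirror the two-case structure of Proposition \ref{prop:dumbliminf}: treat $x \in M \setminus \Sigma$ directly via $\epsilon$-regularity and parabolic bootstrap, and handle $\mathcal{H}^{n-4}$-a.e.\ $x \in \Sigma$ by passing to a weak Radon limit of the measures $|D^*F_i|^2 \, dV dt$ on $[0, \tau] \times M$ and then invoking Lemma \ref{lemma:phisupbound} together with a Vitali-type upper-density estimate.

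For $x \in M \setminus \Sigma$, the definition of $\Sigma$ provides a subsequence $\{A_{i_k}\}$ and a radius $R > 0$ with $\Phi_{i_k}(R, x, \tau - R^2) < \epsilon_0/2$. Propagating this bound by Hamilton's formula (\ref{hamilton:epsilonestimate}), Lemma \ref{lemma:basicepsreg} yields $\|F_{i_k}\|_{L^\infty} \leq C/R^2$ on $B_{R/2}(x) \times [\tau - R^2/2, \tau)$, and the standard parabolic bootstrap upgrades this to $\|D^*F_{i_k}\|_{L^\infty(B_{R/4}(x))} \leq C/R^3$ for $t \in [\tau - R^2/4, \tau)$. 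For $r \leq R/8$ and $\sigma \in [\tau - R^2/4, \tau)$, splitting $\Xi_{i_k}(r, x, \sigma, \tau)$ into the integral over $B_{R/4}(x)$ and its complement, I bound the first piece by $C R^{-6}(\tau - \sigma) r^4$ using $\int u_{r,x} \, dV \leq C r^4$, and the second by $C r^{4-n} e^{-R^2/(64 r^2)} E$ using the Gaussian decay $u_{r,x}(y) \leq C r^{4-n} e^{-R^2/(64 r^2)}$ on $\{d(x,y) \geq R/4\}$ together with the global energy identity $\int_0^\tau \int |D^*F_i|^2 \, dV dt \leq CE$. Both pieces vanish as $r \searrow 0$, so $\limsup_r \underline{\Xi}(r, x, \sigma, \tau) = 0$ for any such $\sigma$, whence $\lim_\sigma \limsup_r \underline{\Xi} = 0$.

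For $\mathcal{H}^{n-4}$-a.e.\ $x \in \Sigma$, I select $\sigma_j \nearrow \tau$ and, using (\ref{sigma:dstarfass}), extract diagonally a subsequence (still indexed by $i$) such that for each $j$ one has $\int_{\sigma_j}^\tau \int_U |D^*F_i|^2 \, dV dt \to \alpha_j \to 0$, where $\alpha_j$ is the original $\liminf_i$. Passing further by Banach--Alaoglu, I may assume $|D^*F_i|^2 \, dV dt \to \bar\nu$ weakly as Radon measures on $[0, \tau] \times M$; after perturbing $\sigma_j$ slightly to avoid the at-most-countable set of slices charged by $\bar\nu$, the projected measure $\tilde\nu_\sigma(A) := \bar\nu([\sigma, \tau] \times A)$ on $M$ is decreasing in $\sigma$ with $\tilde\nu_{\sigma_j}(U) \leq \alpha_j \to 0$. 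Approximating the indicator $\chi_{[\sigma, \tau]}$ by continuous cutoffs in $t$ and using the continuity of $u_{R,x}\varphi_x^2$ in $y$ then gives
\begin{equation*}
\underline{\Xi}(R, x, \sigma, \tau) \leq \int_M u_{R,x}(y) \varphi_x^2(y) \, d\tilde\nu_\sigma(y).
\end{equation*}
Since $\tilde\nu_\sigma(M) \leq CE$, the function $\phi$ of Lemma \ref{lemma:laplacelemma} applied to $\tilde\nu_\sigma$ is uniformly bounded on scales $r \geq R_0$, so the contrapositive of Lemma \ref{lemma:phisupbound} with $k = n - 4$ implies: whenever $\limsup_{R \searrow 0} \int u_{R,x}\varphi_x^2 \, d\tilde\nu_\sigma \geq \delta$, one has $\limsup_{R \searrow 0} \tilde\nu_\sigma(B_R(x))/R^{n-4} \geq \epsilon_\delta > 0$. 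A Vitali covering estimate as in Proposition \ref{prop:hn-4} then bounds the $\mathcal{H}^{n-4}$-measure of this upper-density set by $C \tilde\nu_\sigma(U)/\epsilon_\delta$, which $\to 0$ along $\sigma_j$. Since $\tilde\nu_\sigma$ is decreasing in $\sigma$, the set where $\lim_\sigma \limsup_R \underline{\Xi}(R, x, \sigma, \tau) \geq \delta$ is contained in the intersection over $j$ of these bad sets, hence has vanishing $\mathcal{H}^{n-4}$-measure; a union over $\delta = 1/m$ concludes.

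The principal obstacle is the measure-theoretic bookkeeping in the second part: arranging the diagonal extraction so the weak limit $\bar\nu$ satisfies $\bar\nu([\sigma_j, \tau] \times U) \to 0$, justifying the weak-convergence inequality against the indicator $\chi_{[\sigma,\tau]}$, and checking that the boundedness hypothesis of Lemma \ref{lemma:phisupbound} holds for $\tilde\nu_\sigma$ on scales $r \geq R_0$ (automatic from the global $L^2$ bound on $D^*F_i$). Part 1 is essentially routine given Lemma \ref{lemma:basicepsreg} and the energy identity.
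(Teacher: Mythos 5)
Your first part (points $x \in M \setminus \Sigma$) is fine and is essentially the paper's one-line appeal to Lemma \ref{lemma:basicepsreg}, just with the near/far splitting of $u_{r,x}$ written out. The problem is in the second part, at the very first step: the claim that one can ``extract diagonally a subsequence (still indexed by $i$) such that for each $j$ one has $\int_{\sigma_j}^\tau\!\int_U |D^*F_i|^2\,dVdt \to \alpha_j$, where $\alpha_j$ is the original $\liminf_i$.'' This is not bookkeeping; it is false in general. A $\liminf$ can only increase when you pass to a subsequence, and the indices realizing the $\liminf$ in (\ref{sigma:dstarfass}) at the level $\sigma_j$ may be essentially disjoint for different $j$: one can write down tail profiles $a_i^{(j)}=\int_{\sigma_j}^\tau\!\int_U|D^*F_i|^2$ (nonincreasing in $j$, tending to $0$ in $j$ for each fixed $i$, with $\liminf_i a_i^{(j)}\to 0$) for which \emph{no} subsequence $S$ satisfies $\limsup_{i\in S} a_i^{(j)}\to 0$ as $j\to\infty$ --- e.g.\ group the indices into blocks, where the $i$'s in block $k$ have $a_i^{(j)}\approx 1$ for $j\le V_k$ and $a_i^{(j)}\approx 2^{-k}$ for $V_k\le j< W_i$, with $V_k\to\infty$ in $k$ and $W_i\to\infty$ within each block. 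Then (\ref{sigma:dstarfass}) holds for the full sequence, but any subsequence either stays in one block (so $\limsup_{i\in S}a_i^{(j)}\gtrsim 2^{-k}$ for every $j$) or escapes to later blocks (so $\limsup_{i\in S}a_i^{(j)}=1$ for every $j$). In that situation every subsequential weak-$*$ limit $\bar\nu$ of $|D^*F_i|^2\,dVdt$ charges the top slice $\{\tau\}\times U$ with a fixed positive mass, so your key estimate $\tilde\nu_{\sigma_j}(U)\le \alpha_j\to 0$ fails and the Hausdorff bound on the bad set collapses. In short: hypothesis (\ref{sigma:dstarfass}) is a statement about the $\liminf$ over the \emph{original} sequence and is not inherited by the subsequence on which you build $\bar\nu$.

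The rest of your outline (contrapositive of Lemma \ref{lemma:phisupbound}, Vitali covering as in Proposition \ref{prop:hn-4}, intersecting bad sets over $\sigma_j$, union over $\delta=1/m$) is sound, and there are two ways to repair the flawed step. You can run your weak-limit argument separately for each fixed $j$, choosing for that $j$ alone a subsequence realizing $\liminf_i a_i^{(j)}=\alpha_j$ and a weak limit $\bar\nu_j$; since $\underline{\Xi}$ in (\ref{xiprop:est}) is a $\liminf$ over the full sequence, it is bounded above along any subsequence, so the level-$j$ bad set $\{x:\limsup_R\underline{\Xi}(R,x,\sigma_j,\tau)\ge 1/m\}$ gets $\mathcal{H}^{n-4}$-measure $\lesssim m\,\alpha_{j'}$, and you conclude by intersecting over $j$. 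Alternatively --- and this is what the paper does --- avoid subsequences and limit measures altogether: apply the contrapositive of Lemma \ref{lemma:phisupbound} directly to the quantity $\underline{\Xi}(R,x,\sigma_k,\tau)$ to produce, at each bad point, a radius $R_x<\delta$ with $R_x^{4-n}\liminf_i\int_{\sigma_k}^\tau\!\int_{B_{R_x}(x)}|D^*F_i|^2\ge \epsilon_j$, and then in the Vitali sum use superadditivity of $\liminf$ (Fatou for series), $\sum_\ell \liminf_i \le \liminf_i \sum_\ell$, so that (\ref{sigma:dstarfass}) is invoked verbatim as $\liminf_i\int_{\sigma_k}^\tau\!\int_M|D^*F_i|^2\le 1/k$ at the last line. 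Either repair brings your argument back to, or very close to, the paper's proof.
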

\begin{proof} The proof is similar to that of the previous Proposition, with an extra step. First, note from Lemma \ref{lemma:basicepsreg} that the limit (\ref{xiprop:est}) is zero if $x \in M \setminus \Sigma.$ 

Let $S_j$ be the set of points $x \in \Sigma$ such that
\begin{equation*}
\limsup_{\sigma \nearrow \tau} \limsup_{R \searrow 0} \underline{\Xi} \left( R, x, \sigma, \tau \right) \geq \frac{1}{j}.
\end{equation*}
For each $k > 0,$ by (\ref{sigma:dstarfass}), we may choose $\sigma_k < \tau$ such that
\begin{equation*}
\liminf_{i \to \infty} \int_{\sigma_k}^\tau \!\!\! \int_M |D^*F_i|^2 \, dV dt \leq \frac{1}{k}.
\end{equation*}
Further let $S_{j,k} \supset S_j$ be the set of $x \in \Sigma$ such that
\begin{equation}\label{xiprop:rest}
\limsup_{R \searrow 0} \underline{\Xi} (R, x, \sigma_k, \tau) \geq \frac{1}{j}.
\end{equation}
We will show that $\mathcal{H}^{n-4} (S_{j,k}) \to 0$ as $k \to \infty,$ for each fixed $j.$ Since $S_j \subset S_{j,k},$ this will imply that $\mathcal{H}^{n-4} (S_j) = 0.$

Let $\delta > 0.$ As above, define $\epsilon_j > 0$ such that the RHS of (\ref{phisupbound:est}), with $\epsilon = \epsilon_j,$ is equal to $1/2j.$ By the contrapositive of Lemma \ref{lemma:phisupbound}, (\ref{xiprop:rest}) implies that for each $x \in S_{j,k},$ there exists $0 < R_x < \delta$ such that 
\begin{equation}\label{xiprop:rxest}
R_x^{4-n} \liminf_{i \to \infty} \int_{\sigma_k}^\tau \!\!\! \int_{B_{R_x}(x)} |D^*F_i|^2 \, dV dt \geq \epsilon_j.
\end{equation}
Let $\{x_\ell \} \subset S_{j,k}$ be such that $ S_{j,k}\subset \cup B_{5R_{x_\ell}}(x_\ell)$ and $B_{R_{x_\ell}}(x_\ell) \cap B_{R_{x_m}}(x_m) = \emptyset$ for $\ell \neq m.$ By (\ref{xiprop:rxest}), we have
\begin{equation*}
\begin{split}
\mathcal{H}^{n-4}_{5\delta}(S_{j,k}) \leq \sum_\ell (5 R_{x_\ell})^{n-4} & \leq \frac{5^{n-4}}{\epsilon_j} \sum_\ell \liminf_{i \to \infty} \int_{\sigma_k}^\tau \!\!\! \int_{B_{R_{x_\ell}}(x_\ell)} |D^*F_i|^2 \, dV dt \\
& \leq \frac{5^{n-4}}{\epsilon_j}  \liminf_{i \to \infty} \sum_\ell \int_{\sigma_k}^\tau \!\!\! \int_{B_{R_{x_\ell}}(x_\ell)} |D^*F_i|^2 \, dV dt \\
& \leq \frac{5^{n-4}}{\epsilon_j} \liminf_{i \to \infty} \int_{\sigma_k}^\tau \!\!\! \int_{M} |D^*F_i|^2 \, dV dt \\
& \leq \frac{5^{n-4}}{k \epsilon_j}.
\end{split}
\end{equation*}
We may let $\delta \to 0,$ then let $k \to \infty,$ to conclude that
$$\lim_{k \to \infty} \mathcal{H}^{n-4}(S_{j,k}) = 0$$
and therefore $\mathcal{H}^{n-4}(S_j) = 0.$

As before, the set of $x$ satisfying (\ref{dumbliminf:est}) is the complement of the union of $S_j,$ for $j = 1,2, \ldots ,$ and therefore has full $\mathcal{H}^{n-4}$-measure in $\Sigma.$
\end{proof}

\begin{cor}\label{cor:xicor} Assuming (\ref{sigma:dstarfass}), for any $L > 0,$ we have
\begin{equation}\label{xicor:est}
\lim_{R \searrow 0} \underline{\Xi} \left(R, x, \tau - L R^2, \tau \right) = 0
\end{equation}
for all $x \in M \setminus \Sigma$ and for $\mathcal{H}^{n-4}$-a.e. $x \in \Sigma.$
\end{cor}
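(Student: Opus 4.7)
The plan is to deduce this corollary directly from Proposition \ref{prop:xiprop} by exploiting the monotonicity of $\Xi$ in its time variables. Fix $x$ in the set where Proposition \ref{prop:xiprop} applies (i.e., any $x \in M \setminus \Sigma$ or an $\mathcal{H}^{n-4}$-a.e.\ point of $\Sigma$), and fix $L > 0$ and $\epsilon > 0$.

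First, I would use Proposition \ref{prop:xiprop} to select $\sigma_0 < \tau$ such that
\begin{equation*}
\limsup_{R \searrow 0} \underline{\Xi}\left(R, x, \sigma_0, \tau\right) < \epsilon.
\end{equation*}
Next, I would observe the elementary containment: for every $R < \sqrt{(\tau - \sigma_0)/L}$, the interval $[\tau - LR^2, \tau]$ is contained in $[\sigma_0, \tau]$. Since $|D^*F_i|^2 u_{R,x} \varphi_x^2 \geq 0$, the definition (\ref{xidef}) gives $\Xi_i(R, x, \tau - LR^2, \tau) \leq \Xi_i(R, x, \sigma_0, \tau)$ for every $i$, and taking $\liminf_{i \to \infty}$ preserves this inequality. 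Therefore
\begin{equation*}
\limsup_{R \searrow 0} \underline{\Xi}\left(R, x, \tau - LR^2, \tau\right) \leq \limsup_{R \searrow 0} \underline{\Xi}\left(R, x, \sigma_0, \tau\right) < \epsilon.
\end{equation*}
Since $\epsilon > 0$ was arbitrary and $\underline{\Xi} \geq 0$, the limit in (\ref{xicor:est}) vanishes.

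There is essentially no obstacle here: the corollary is a trivial consequence of monotonicity of the integral in its endpoints together with the already-established Proposition \ref{prop:xiprop}. The only conceptual point worth flagging is that $\underline{\Xi}$ is defined through a $\liminf$ in $i$, and one must verify (as above) that the pointwise-in-$i$ monotonicity survives the passage to $\liminf_{i \to \infty}$, which it does because both sides remain nonnegative and the inequality holds for every index.
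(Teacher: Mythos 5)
Your argument is correct and is exactly the reasoning behind the paper's one-line proof, which simply notes that (\ref{xicor:est}) is implied by (\ref{xiprop:est}); your write-up just makes explicit the monotonicity in the lower time endpoint (once $R^2 < (\tau-\sigma_0)/L$) and its compatibility with the $\liminf$ in $i$. No issues.
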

\begin{proof} For a given $x,$ the condition (\ref{xicor:est}) is implied by (\ref{xiprop:est}).
\end{proof}

\subsection{Gauge-patching lemmas}

This section carries out a minor correction to Lemmas 4.4.5-4.4.7 and Corollary 4.4.8 of Donaldson and Kronheimer \cite{donkron}, originally due to Uhlenbeck \cite{uhlenbecklp} in a different form.\footnote{Note the phrase ``we extend $\left. \tilde{u} \right|_N$ arbitrarily over $\Omega$...'' in the middle of p. 159 of \cite{donkron}. To see that it is not always possible to extend gauge transformations, consider the trivial $SU(2)$ bundle on $B_1 \subset \R^4$ and the gauge transformation defined over $B_1 \setminus B_{1/2}$ corresponding to the identity map of $SU(2) \cong S^3.$} 

\begin{rmk}\label{rmk:referenceconn} As in Lemma \ref{lemma:distancecomp}, we fix a reference connection $\nabla_{ref}$ on the bundle $E,$ which we use to define the $C^k$ norms on bundle-valued differential forms. By definition, the $C^k$ norm of a connection is equal to $\| A\|_{C^k},$ where $A$ is the unique (global) 1-form such that $\nabla_A = \nabla_{ref} + A.$
\end{rmk}

\begin{lemma}[\textit{Cf.} \cite{donkron}, Lemma 4.4.5]\label{lemma:dkunfuck} Suppose that $A_i$ is a sequence of connections on a bundle $E$ over a base manifold $\Omega,$ and $\tilde{\Omega} \Subset \Omega$ is an interior domain. Suppose that there are gauge transformations $u_i \in \mbox{Aut} \, E$ and $\tilde{u}_i \in \left. \mbox{Aut} \, E \right|_{\tilde{\Omega}}$ such that $u_i(A_i)$ converges in $C^\infty_{loc}\left(\Omega \right)$ and $\tilde{u}_i (A_i)$ converges over $C^\infty_{loc}\left( \tilde{\Omega} \right).$ Then for any compact set $K \subset \tilde{\Omega},$ there exists a subsequence $\{j \} \subset \{i\},$ with $j \geq j_0,$ and gauge transformations $w_j$ such that
\begin{equation}
w_j = \begin{cases}
\tilde{u}_{j_0}^{-1} \tilde{u}_j & \mbox{ on } K \\
u_{j_0}^{-1} u_j &  \mbox{ on } \Omega \setminus \tilde{\Omega}
\end{cases}
\end{equation}
and the connections $w_j(A_j)$ converge in $C^\infty_{loc}\left(\Omega \right).$
\end{lemma}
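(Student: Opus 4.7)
The strategy is to compare the two gauge-fixing sequences $u_j$ and $\tilde u_j$ through the transition gauge transformation on the overlap $\tilde\Omega$, establish $C^\infty_{loc}$ convergence of this transition via Arzelà--Ascoli, and then produce $w_j$ by an exponential-cutoff construction. Set $v_j := u_{j_0}^{-1} u_j$ and $\tilde v_j := \tilde u_{j_0}^{-1}\tilde u_j$, so that $v_{j_0}=\tilde v_{j_0}=\mathbbm{1}$ and $v_j(A_j)$, $\tilde v_j(A_j)$ converge in $C^\infty_{loc}$ on $\Omega$, respectively $\tilde\Omega$. Consider the transition
$$\sigma_j := \tilde v_j\, v_j^{-1}\ \text{on}\ \tilde\Omega,\qquad \sigma_{j_0}=\mathbbm{1},$$
which intertwines $v_j(A_j)$ and $\tilde v_j(A_j)$. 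From the gauge transformation law $\sigma_j^{-1} d\sigma_j = \tilde v_j(A_j) - \sigma_j^{-1}\,v_j(A_j)\,\sigma_j$, bootstrapping the smooth bounds on $v_j(A_j)$ and $\tilde v_j(A_j)$ yields $C^k_{loc}(\tilde\Omega)$ bounds on $\sigma_j$ for every $k$. Passing to a subsequence $\{j\}\subset\{i\}$ with $j\geq j_0$, I get $\sigma_j\to\sigma_\infty$ in $C^\infty_{loc}(\tilde\Omega)$.

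Next, choose nested compacts $K\Subset K'\Subset K''\Subset\tilde\Omega$ with a smooth cutoff $\chi$ satisfying $\chi\equiv 1$ on $K'$ and $\mathrm{supp}\,\chi\subset K''$. Since $\sigma_\infty$ is a fixed smooth gauge transformation on $\tilde\Omega$, build a smooth global interpolant $\hat\sigma_\infty:\Omega\to\mathrm{Aut}\,E$ with $\hat\sigma_\infty\equiv\sigma_\infty$ on $K'$ and $\hat\sigma_\infty\equiv\mathbbm{1}$ outside $K''$. For $j$ large enough, the $C^\infty$ convergence $\sigma_j\to\sigma_\infty$ makes $\sigma_\infty^{-1}\sigma_j$ close enough to $\mathbbm{1}$ that its logarithm $\eta_j\to 0$ in $C^\infty_{loc}(\tilde\Omega)$ is well-defined, so $\sigma_j=\sigma_\infty\exp(\eta_j)$. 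Then define
$$w_j := \hat\sigma_\infty\cdot\exp(\chi\,\eta_j)\cdot v_j,$$
extended by $v_j$ outside $\tilde\Omega$ (which matches since $\hat\sigma_\infty=\mathbbm{1}$ and $\chi=0$ there). On $K$ this gives $\sigma_\infty\exp(\eta_j)v_j=\sigma_j v_j=\tilde v_j$, and on $\Omega\setminus\tilde\Omega$ it gives $v_j$, as required. Because $v_j(A_j)\to u_{j_0}^{-1}(A_\infty)$ in $C^\infty_{loc}(\Omega)$, $\hat\sigma_\infty$ is fixed, and $\exp(\chi\eta_j)\to\mathbbm{1}$ smoothly, the transformed sequence $w_j(A_j)$ converges to $\hat\sigma_\infty\bigl(u_{j_0}^{-1}(A_\infty)\bigr)$ in $C^\infty_{loc}(\Omega)$.

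\textbf{Main obstacle.} The delicate step is constructing $\hat\sigma_\infty$: extending $\sigma_\infty$ from a neighborhood of $K'$ to the identity on $\Omega\setminus K''$, smoothly across the annular region $K''\setminus K'$. This is a pointwise homotopy in the structure group, and could in principle be obstructed by $\pi_1$ or higher homotopy of the gauge group on $K''\setminus K'$. I would handle this by shrinking $K'$ and $K''$ suitably (covering $K$ by finitely many small balls over which $E$ is trivial, and treating the extension ball-by-ball using the local exponential map) --- made possible by the fact that $\sigma_{j_0}=\mathbbm{1}$ lies in the identity component, so for the chosen subsequence the limit $\sigma_\infty$ can be smoothly deformed to $\mathbbm{1}$ through a family realized as Lie-algebra cutoffs. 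Once $\hat\sigma_\infty$ is in hand, the rest of the argument is a direct verification, and the construction automatically yields the matching conditions required by the statement.
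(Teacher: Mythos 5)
There is a genuine gap, and you have correctly identified where it is: the existence of the global interpolant $\hat\sigma_\infty$ is not established, and the justification you sketch for it does not work. The fact that $\sigma_{j_0}=\mathbbm{1}$ says nothing about the homotopy class of $\sigma_j$ for $j\neq j_0$, nor of the limit $\sigma_\infty$: each $\sigma_j=\tilde u_{j_0}^{-1}\tilde u_j u_j^{-1}u_{j_0}$ is a separate gauge transformation, and with $j_0$ fixed at the outset (as in your setup, where $j_0$ is used to define $v_j,\tilde v_j,\sigma_j$ before the subsequence is extracted) the limit $\sigma_\infty$ can lie in a nontrivial component. Concretely, take structure group $U(1)$, let $K\subset\tilde\Omega$ be a thickened non-contractible loop, and set $\tilde u_j=g\,u_j$ for $j\geq 2$ with $g$ a gauge transformation of nonzero winding on $K$, while $\tilde u_{j_0}=u_{j_0}$ with $j_0=1$; both $u_j(A_j)$ and $\tilde u_j(A_j)=g(u_j(A_j))$ converge, but $\sigma_\infty$ has nonzero winding on $K$ and admits no extension equal to $\mathbbm{1}$ outside $K''.$ Covering $K$ by small balls and extending ball-by-ball does not help, since the obstruction is global on $K$ (and in the paper's applications $K$ is an intersection of domains in $M\setminus\Sigma$, not a ball), so the ``main obstacle'' you flag is not merely technical: as written, the construction of $\hat\sigma_\infty$ can fail.

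The missing idea is to exploit the freedom in $j_0$ granted by the statement: first extract a subsequence along which the transition $v_i=\tilde u_i u_i^{-1}$ converges in $C^\infty$ on a neighborhood $N$ of $K$ (the bootstrapping and Arzel\`a--Ascoli step, which you do carry out correctly), and only \emph{then} choose $j_0$ so large that $z_j=v_{j_0}^{-1}v_j$ is uniformly close to the identity on $N$ for all $j\geq j_0.$ Then $\xi_j=\log z_j$ is well defined and small, one sets $z_j=\exp(\psi\,\xi_j)$ globally for a cutoff $\psi$ of $K\subset N,$ and defines $w_j=u_{j_0}^{-1}z_j u_j.$ This gives exactly the required values ($\tilde u_{j_0}^{-1}\tilde u_j$ on $K,$ $u_{j_0}^{-1}u_j$ off $\tilde\Omega$) and smooth convergence of $w_j(A_j),$ with no extension of any ``large'' limiting gauge transformation ever needed: only a near-identity transformation is cut off, so no topological obstruction can arise. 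This is the paper's proof; your argument coincides with it up to the extraction of $\sigma_\infty,$ and repairing it amounts to moving the choice of $j_0$ after that extraction and dropping $\hat\sigma_\infty$ altogether.
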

\begin{proof} Define the gauge transformations over $\tilde{\Omega}$
\begin{equation*}
v_i = \tilde{u}_i u_i^{-1}.
\end{equation*}
These satisfy
\begin{equation}\label{dkunfuck:relativegauge}
v_i(u_i(A_i)) = \tilde{u}_i(A_i).
\end{equation}
Choose an open set $N$ with
$$K \subset N \Subset \tilde{\Omega}.$$
Since both $u_i(A_i)$ and $\tilde{u}_i(A_i)$ are smoothly convergent, we conclude from the usual bootstrapping argument (e.g. \cite{donkron}, p. 64) applied to (\ref{dkunfuck:relativegauge}), that $v_i$ are bounded in $C^k(N)$ for all $k.$ By the Arzela-Ascoli Theorem, we may extract a convergent subsequence $v_j.$ Choosing $j_0$ sufficiently large, we may assume
\begin{equation*}
z_j = v_{j_0}^{-1} v_j
\end{equation*} 
is arbitrarily close to the identity on $N$ for all $j \geq j_0.$ Letting $\xi_j = \log z_j,$ and choosing a cutoff $\psi$ for $K \subset N,$ we may extend $z_j$ over $\Omega$ by the formula
$$z_j = \exp \left(\psi \xi_j \right).$$
Defining
$$w_j = u_{j_0}^{-1} z_j u_j$$
yields the desired sequence of gauge transformations over $\Omega.$
\end{proof}

\begin{lemma}[\textit{Cf.} \cite{donkron}, Lemma 4.4.7]\label{lemma:dktwopatch} Suppose $\Omega$ is a union of domains $\Omega = \Omega_1 \cup \Omega_2,$ and $A_i$ is a sequence of connections on a bundle $E$ over $\Omega.$ Choose a compactly contained subdomain $\Omega' \Subset \Omega.$ If there are sequences of gauge transformations $v_i \in \left. \mbox{Aut} \, E \right|_{\Omega_1}$ and $w_i \in \left. \mbox{Aut} \, E \right|_{\Omega_2}$ such that $v_i(A_i)$ and $w_i(A_i)$ converge over $\Omega_1$ and $\Omega_2,$ respectively, then there is a subsequence $\{j \}$ and gauge transformations $u_j$ over $\Omega'$ such that $u_j(A_j)$ converges over $\Omega'.$
\end{lemma}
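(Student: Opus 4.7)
The lemma is the two-patch analogue of Lemma \ref{lemma:dkunfuck}: both convergent gauges $v_i$ and $w_i$ are now local (on $\Omega_1$ and $\Omega_2$ respectively), rather than one being global. The strategy will parallel the previous proof---bootstrap the transition on the overlap, extract a convergent subsequence, and interpolate via a cutoff-logarithm---but with care taken so that the interpolation formula never evaluates $v_j$ or $w_j$ outside its own patch.

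First, on $U := \Omega_1 \cap \Omega_2$, both $v_i(A_i)$ and $w_i(A_i)$ converge smoothly, so the relative gauge $g_i := w_i v_i^{-1}$ satisfies $g_i(v_i(A_i)) = w_i(A_i)$. The bootstrap invoked in the preceding proof then shows $\{g_i\}$ is $C^k_{loc}(U)$-bounded for every $k$, and Arzela-Ascoli plus diagonalization produce a subsequence $\{j\}$ with $g_j \to g_\infty$ in $C^\infty_{loc}(U)$.

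Since $\Omega' \Subset \Omega_1 \cup \Omega_2$, I would then pick open sets $\Omega'_k \Subset \Omega_k$ with $\Omega' \subset \Omega'_1 \cup \Omega'_2$ and $V := \Omega'_1 \cap \Omega'_2 \Subset U$, and on $V$ consider the discrepancy
\[
h_j := (v_{j_0}^{-1} v_j)(w_{j_0}^{-1} w_j)^{-1} = v_{j_0}^{-1} g_j^{-1} g_{j_0} v_{j_0},
\]
which equals the identity at $j = j_0$ and converges in $C^\infty(\overline V)$ to $v_{j_0}^{-1} g_\infty^{-1} g_{j_0} v_{j_0}$. By taking the reference index $j_0$ far enough out in the subsequence that $g_{j_0}$ is $C^0$-close to $g_\infty$ on $\overline V$, the entire sequence $\{h_j\}_{j \geq j_0}$ lies uniformly close to the identity, so $\eta_j := \log h_j$ is well-defined and smoothly convergent on $\overline V$. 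Now pick a smooth cutoff $\psi$ on $\Omega'$ with $\psi \equiv 1$ on a neighborhood of $\Omega' \setminus \Omega'_2$, $\psi \equiv 0$ on a neighborhood of $\Omega' \setminus \Omega'_1$, and $\{0 < \psi < 1\} \Subset V$, and define
\[
u_j := \begin{cases} v_{j_0}^{-1} v_j, & \psi \equiv 1, \\ \exp(\psi \eta_j) \cdot (w_{j_0}^{-1} w_j), & \text{on } V, \\ w_{j_0}^{-1} w_j, & \psi \equiv 0. \end{cases}
\]
The identity $h_j (w_{j_0}^{-1} w_j) = v_{j_0}^{-1} v_j$ on $V$ makes the three formulas agree on their overlaps, yielding a smooth gauge transformation on all of $\Omega'$. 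On each region $u_j(A_j)$ is the image under a smoothly convergent gauge transformation of the convergent sequence $w_{j_0}^{-1}(w_j(A_j))$ (equivalently $v_{j_0}^{-1}(v_j(A_j))$), so $u_j(A_j)$ converges in $C^\infty_{loc}(\Omega')$.

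The principal obstacle is exactly the asymmetry with Lemma \ref{lemma:dkunfuck}: neither $v_j$ nor $w_j$ extends naturally off its own patch, so the corrective factor $\exp(\psi \eta_j)$ must live strictly inside the overlap. This is why $j_0$ must be chosen so late in the subsequence that all $h_j$ with $j \geq j_0$ stay in the injectivity domain of $\exp$ on $\overline V$; once this is arranged, the cutoff gluing goes through exactly as in the proof of the preceding lemma.
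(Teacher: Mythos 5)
Your argument is correct and rests on the same mechanism as the paper's proof: bootstrap the relative gauge $g_i = w_i v_i^{-1}$ on the overlap, extract a smoothly convergent subsequence, and interpolate with $\exp(\psi \log(\cdot))$ --- in effect you have re-derived Lemma \ref{lemma:dkunfuck} inline in the symmetric two-patch setting. The paper's own proof is simply the reduction to that lemma: it applies Lemma \ref{lemma:dkunfuck} with $\Omega = \Omega_1$, the $v_i$ as the global gauges, the restrictions of $w_i$ to a slightly shrunk overlap as the interior gauges, and $K = \Omega_1' \cap \Omega_2'$, and then glues the resulting gauges with $w_{j_0}^{-1} w_j$ over $\Omega_2'$, so your construction duplicates that work but lands in the same place.
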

\begin{proof} We may assume without loss that $\Omega' \subset \Omega_1' \cup \Omega_2',$ for subdomains $\Omega_1' \Subset \Omega_1$ and $\Omega_2' \Subset \Omega_2.$ Applying Lemma \ref{lemma:dkunfuck} with $\Omega = \Omega_1$ and $K = \Omega_1' \cap \Omega_2',$ we obtain gauge transformations $v'_j$ over $\Omega_1$ such that $v'_j = w_{j_0}^{-1} w_j$ over $\Omega_1'.$ Then each $v'_j$ glues together with $w_{j_0}^{-1} w_j$ to define gauge transformations $u_j$ over $\Omega'$ such that $u_j(A_j)$ converges, as desired.
\end{proof}

\begin{cor}[\textit{Cf.} \cite{donkron}, Corollary 4.4.8]\label{cor:dkballs} Suppose $A_i$ is a sequence of connections on $E \to \Omega$ with the following property. For each point $x \in \Omega,$ and each subsequence $\{j \} \subset \{i\},$ there is a neighborhood $D$ of $x,$ a further subsequence $\{k \} \subset \{j \},$ and gauge transformations $v_k$ defined over $D$ such that $v_k(A_k)$ converges over $D.$ Then for any subdomain $\Omega' \Subset \Omega,$ there is a single subsequence $\{j\}$ and gauge transformations $u_j$ defined over $\Omega',$ such that $u_j(A_j)$ converges over all of $\Omega'.$
\end{cor}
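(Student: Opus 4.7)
The plan is to cover $\overline{\Omega'}$ by finitely many patches supplied by the hypothesis, diagonalize to extract a single subsequence converging on each patch (with patch-dependent gauges), and then iterate Lemma \ref{lemma:dktwopatch} to merge the patches into a single gauge-convergent region.

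First, fix an intermediate open set $\Omega''$ with $\Omega' \Subset \Omega'' \Subset \Omega,$ so that $\overline{\Omega''}$ is compact in $\Omega.$ For each $x \in \overline{\Omega''},$ applying the hypothesis to the full sequence $\{i\}$ produces a neighborhood $D_x$ of $x,$ together with a subsequence and gauge transformations over $D_x$ under which the connections converge in $C^\infty_{\mathrm{loc}}.$ By compactness of $\overline{\Omega''},$ extract a finite subcover $D_1, \ldots, D_N$ centered at $x_1, \ldots, x_N.$ Now diagonalize: setting $\{i^{(0)}\} = \{i\},$ iteratively apply the hypothesis to each $x_\ell$ paired with the currently extracted subsequence $\{i^{(\ell-1)}\}$ to obtain a (possibly smaller) neighborhood $D^{(\ell)}$ of $x_\ell,$ a further subsequence $\{i^{(\ell)}\} \subset \{i^{(\ell-1)}\},$ and gauge transformations $v^{(\ell)}_i$ on $D^{(\ell)}$ making $v^{(\ell)}_i(A_i)$ converge. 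Along the final subsequence $\{j\} = \{i^{(N)}\}$ there is simultaneous convergence on every $D^{(\ell)},$ with patch-dependent gauge.

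Next, iterate Lemma \ref{lemma:dktwopatch}. Begin with $W_1 = D^{(1)}$ and gauge $u^{(1)}_j = v^{(1)}_j.$ At step $\ell,$ apply the lemma with $\Omega_1 = W_{\ell-1},$ $\Omega_2 = D^{(\ell)},$ producing a further subsequence and a single gauge $u^{(\ell)}_j$ on a compactly contained subdomain $W_\ell \Subset W_{\ell-1} \cup D^{(\ell)}.$ Schedule these subdomains in advance so that each $W_\ell$ contains a prescribed growing compact ``core'' $K_\ell \supset \{x_1, \ldots, x_\ell\}$; this is possible because $K_{\ell-1} \cup \{x_\ell\}$ is a compact subset of $W_{\ell-1} \cup D^{(\ell)}.$ After $N$ steps, one obtains a single subsequence and gauge transformations $u^{(N)}_j$ defined on $W_N.$

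The main obstacle is ensuring that $W_N$ ultimately contains $\overline{\Omega'}.$ Because the $D^{(\ell)}$ extracted in the diagonal step can be strictly smaller than the originally chosen $D_\ell,$ a single pass through the procedure may fall short. In that case, repeat: choose any point $x \in \overline{\Omega'} \setminus W_N,$ apply the hypothesis with the current subsequence to obtain a new patch, and combine by one more application of Lemma \ref{lemma:dktwopatch}, enlarging the core. Compactness of $\overline{\Omega'},$ together with the fact that each additional patch is an open neighborhood of its center, guarantees that finitely many additional rounds suffice to exhaust $\overline{\Omega'}.$ The resulting open set $W \supset \overline{\Omega'}$ carries gauge transformations $u_j$ for which $u_j(A_j)$ converges on $W,$ and in particular on $\Omega',$ as required.
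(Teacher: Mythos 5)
Your overall architecture (finite cover, diagonal extraction of one subsequence, then iterated use of Lemma \ref{lemma:dktwopatch} with a scheduled sequence of compactly contained ``cores'') is the same as the paper's, but the final step contains a genuine gap. You correctly observe that, as literally quantified, the hypothesis lets the neighborhood depend on the subsequence, so the patches $D^{(\ell)}$ produced during the diagonalization need not reproduce the cover $D_1,\dots,D_N$ chosen at the outset, and $W_N$ may fail to contain $\overline{\Omega'}$. Your remedy --- keep picking an uncovered point $x\in\overline{\Omega'}\setminus W_N$, invoke the hypothesis along the current subsequence, merge, and appeal to compactness for termination --- does not work: compactness of $\overline{\Omega'}$ yields finite subcovers only of covers specified in advance, not termination of an adaptive process in which each new patch is handed to you \emph{after} you name the point and may be arbitrarily small. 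Indeed, an adversarial choice of shrinking neighborhoods can keep the uncovered set nonempty (even containing a fixed small closed ball) after every finite number of rounds, so ``finitely many additional rounds suffice'' is exactly the unproved claim on which your argument rests.

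The repair is to notice that the patches never need to shrink in the first place: if $v_k(A_k)$ converges over a ball $D$ along some subsequence, then the same gauges converge over the same $D$ along every further subsequence. This is how the hypothesis is actually furnished in the proof of Theorem \ref{thm:uhlenbeck} (Uhlenbeck's theorem produces a ball $D\ni x$ depending only on the point, on which any subsequence admits a gauge-convergent further subsequence), and it is how the paper reads ``balls with the stated property.'' With that reading you fix once and for all a finite cover $D_1,\dots,D_N$ of $\overline{\Omega'}$ by such balls, refine the subsequence $N$ times --- reusing each $D_\ell$ rather than re-applying the hypothesis to obtain a smaller $D^{(\ell)}$ --- and then your two-patch iteration with the scheduled cores finishes in a single pass, with no additional rounds needed; this is precisely the paper's proof. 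If one insists on the strictly literal quantification (ball genuinely dependent on the subsequence), a finite greedy procedure is not enough and a substantially more involved exhaustion argument would be required, so in either case the termination step as you wrote it is the missing piece.
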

\begin{proof} Choose a finite cover of $\Omega'$ by balls $D_\ell \Subset \Omega$ with the stated property. Applying Lemma \ref{lemma:dktwopatch} with $\Omega_1 = D_1 \cup \cdots \cup D_{m - 1}$ and $\Omega_2 = D_m,$ we may cobble together the gauges on $D_\ell,$ in finitely many steps, to obtain a sequence of global gauges on $\Omega'$ with the desired property.
\end{proof}

\begin{cor}[\textit{Cf.} \cite{donkron}, Lemma 4.4.6]\label{cor:exhaustion} Suppose that $\Omega$ is exhausted by an increasing sequence of precompact open sets
\begin{equation}
U_1 \Subset U_2 \Subset \cdots \subset \Omega, \quad \bigcup_{m = 1}^{\infty} U_m = \Omega.
\end{equation}
Let $A_i$ be a sequence of connections on $E \to \Omega$ with the property stated in Corollary \ref{cor:dkballs}. 
Then there is a subsequence $\{j\},$ a bundle $E_\infty \to \Omega,$ and bundle maps
\begin{equation}\label{exhaustion:ujmap}
u_j : \left. E \right|_{U_j} \to \left. E_\infty \right|_{U_j}
\end{equation}
such that $u_j(A_j)$
converges in $C^\infty_{loc}(\Omega)$ to a connection $A_\infty$ on $E_\infty \to \Omega.$
\end{cor}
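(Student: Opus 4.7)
The plan is to combine Corollary \ref{cor:dkballs} with Lemma \ref{lemma:dkunfuck} via a nested-subsequence/diagonal argument, then glue the resulting local gauges into a single bundle $E_\infty \to \Omega$ with a globally convergent sequence of bundle maps. This is essentially the construction of \cite{donkron}, Lemma 4.4.6, with the sharpened Lemma \ref{lemma:dkunfuck} in hand.

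First I would choose an intermediate exhaustion $U_m \Subset V_m \Subset U_{m+1}$, so that there is room to perform the patching. Apply Corollary \ref{cor:dkballs} iteratively: obtain nested subsequences $\{i^{(m)}_k\}_k \subset \{i^{(m-1)}_k\}_k$ together with gauges $\sigma^{(m)}_k$ defined on $V_m$ such that $\sigma^{(m)}_k(A_{i^{(m)}_k})$ converges in $C^\infty_{loc}(V_m)$ to a limiting connection $A^{(m)}_\infty$. For each $m$, apply Lemma \ref{lemma:dkunfuck} with $\Omega = V_{m+1}$, $\tilde{\Omega} = V_m$, $K = \overline{U}_m$, $u_i = \sigma^{(m+1)}_k$, $\tilde{u}_i = \sigma^{(m)}_k$. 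After extracting a further subsequence and replacing $\sigma^{(m+1)}_k$ by the output of the lemma, we may arrange
\begin{equation*}
\sigma^{(m+1)}_k\big|_{U_m} \;=\; (\sigma^{(m)}_{k_0})^{-1}\,\sigma^{(m)}_k\big|_{U_m}
\end{equation*}
for all $k \geq k_0$, while $\sigma^{(m+1)}_k(A_{i^{(m+1)}_k})$ still converges smoothly on $V_{m+1}$. Performing this patching in order $m = 1, 2, \ldots$, and only then taking the diagonal subsequence $j_k := i^{(k)}_k$, we obtain a single indexing for which $\sigma^{(m)}_k(A_{j_k}) \to A^{(m)}_\infty$ in $C^\infty_{loc}(V_m)$ for every $m$.

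Finally, I would build $E_\infty \to \Omega$ by gluing the restrictions $E|_{U_m}$ along the fixed transition automorphisms $g_m := (\sigma^{(m)}_{k_0})^{-1}$ over the overlap $U_m \subset U_{m+1}$; the cocycle condition is automatic since each $g_m$ is an honest gauge transformation of the single underlying bundle $E$. The displayed identity above guarantees that the limits $A^{(m)}_\infty$ patch into a well-defined smooth Yang--Mills-flow-limit connection $A_\infty$ on $E_\infty$. Defining the bundle map $u_j : E|_{U_j} \to E_\infty|_{U_j}$ by the gauge $\sigma^{(j)}_{k(j)}$ followed by the quotient identification (for an appropriate reindexing $k(j)$), the convergence $u_j(A_j) \to A_\infty$ in $C^\infty_{loc}(\Omega)$ reduces to the $C^\infty_{loc}$-convergence already established on each $V_m$.

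The main obstacle is purely combinatorial rather than analytic: ensuring that the patching provided by Lemma \ref{lemma:dkunfuck} (which itself requires passing to further subsequences at each step) is carried out \emph{before} the diagonalization, so that the compatibility identity survives restriction to $\{j_k\}$. Once the ordering of these operations is arranged correctly, the remainder of the argument is formal.
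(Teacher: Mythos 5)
Your proposal is correct and follows essentially the same route as the paper's proof: local convergence from Corollary \ref{cor:dkballs}, inductive patching of successive gauges via Lemma \ref{lemma:dkunfuck} so that they agree (up to a fixed gauge transformation) on overlaps, gluing $E_\infty$ by those fixed transition gauges, and a diagonal selection of the subsequence. The intermediate exhaustion $V_m$ and your explicit nested-subsequence bookkeeping are cosmetic variations on the same argument.
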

\begin{proof} By Corollary \ref{cor:dkballs} and a diagonal argument, we can pass to a subsequence of $A_i$ such that for each $m,$ there exist gauge transformations $v_i^m$ defined over $U_m,$ for $i \geq m,$ such that $v^m_i(A_i)$ converges in $C^\infty(U_m)$ as $i \to \infty.$

From the $v^m_i,$ we construct new sequences $w^m_j,$ for $j \geq j_m,$ as follows. Let $w^1_i = v^1_i,$ for $i \geq 1.$ Assuming that $w^m_j$ has been constructed, by applying Lemma \ref{lemma:dkunfuck} to the sequence $w^m_j$ and $v^{m+1}_j,$ we may obtain a sequence $w^{m + 1}_j$ on $U_{m + 1},$ for $j \geq j_{m + 1},$ such that
\begin{equation}\label{exhaustion:wu}
\left. w^{m + 1}_j \right|_{U_m} = \left( w^m_{j_{m+1}} \right)^{-1} w^m_j
\end{equation}
and $w^{m +1}_j(A_j)$ converges on $U_{m + 1}.$ 

We now define the bundle $E_\infty$ as having transition functions $g_{(m + 1)m} = w^m_{j_{m + 1}} $ from $U_{m + 1}$ to $U_{m}.$ We may then define $u_m$ to equal $w^m_{j_m}$ on $U_m,$ which gives a well-defined bundle map of the form (\ref{exhaustion:ujmap}). By (\ref{exhaustion:wu}), the images $u_m(A_m)$ converge on $\left. E_\infty \right|_{U_n}$ for each $n.$
\end{proof}

\begin{rmk} In the case that $U_i$ is a deformation retract of $\Omega$ for sufficiently large $i,$ we may take $E_\infty = \left. E \right|_{\Omega},$ and the $u_i$ may be assumed to be defined over all of $\Omega$ (see Wehrheim \cite{wehrheimuhlenbeck}). Such is the case in dimension four, where $\Sigma$ is a finite set of points, and in the K\"ahler situation in higher dimensions, where $\Sigma$ is a holomorphic subvariety \cite{sibleyasymptotics}.
\end{rmk}

\section{Proof of Theorem \ref{thm:sigma}}

\begin{proof}[Proof of Theorem \ref{thm:sigma}] As above, we let $\Phi_i$ and $\Xi_i$ denote the quantities (\ref{phidef}-\ref{xidef}) corresponding to $A = A_i,$  and define $\underline{\Phi}$ and $\underline{\Xi}$ by (\ref{liminfdef}). Fix $U \Subset U_1 \Subset M$ satisfying (\ref{brho1}), and let
$$\rho_0 = \inf_{x \in U} \rho_1(x).$$
Then, it suffices to prove closedness and rectifiability of $\Sigma \cap U \subset U.$ We replace $\Sigma$ by $\Sigma \cap U$ for the remainder of the proof.

Note that (\ref{sigma:finitenergy}) implies a bound of the form (\ref{efiniteenergy}) for all $A_i,$ with a uniform $E > 0.$ Then, by Hamilton's monotonicity formula (\ref{hamilton:epsilonestimate}), we have a uniform bound
\begin{equation}\label{sigma:e0bound}
\Phi_i(R, x, t) \leq C E = : E_0
\end{equation}
for all $0 < R < \rho_0,$  $x \in U,$  $\rho_0^2 \leq t < \tau,$ and all $i.$ 

Closedness of $\Sigma $ follows by adapting the argument of Nakajima \cite{nakajimacompactness}, as follows. Suppose $\{x_j\} \subset \Sigma$ is a sequence converging to $x \in U.$ 
Let $\epsilon > 0,$ and choose $R > 0$ sufficiently small that
\begin{equation}\label{sigma:2}
R^{4 - n} \exp -\left(\frac{\rho_0}{4R} \right)^2 < \epsilon.
\end{equation}
We may fix $j$ sufficiently large that 
\begin{equation}
 \exp -\left(\dfrac{d \left( x, y \right)}{2R} \right)^2  \varphi_{x}(y) \geq \left( 1 - \epsilon \right) \exp -\left(\dfrac{d \left( x_j, y \right)}{2R} \right)^2  \varphi_{x_j}(y) - C \exp -\left(\dfrac{\rho_0}{4R} \right)^2.
\end{equation}
Integrating against $R^{4-n} |F_i(\tau - R^2)|^2$ yields
\begin{equation}\label{sigma:1}
\begin{split}
\Phi_i(R, x, \tau - R^2) & \geq \left( 1 - \epsilon \right) \Phi_i(R, x_j, \tau - R^2) - C \left( R^{4 - n} \exp -\left(\dfrac{\rho_0}{4R} \right)^2 \right) E.
\end{split}
\end{equation}

Now, because $x_j \in \Sigma,$ there exists $0 < R' \leq R$ such that for all sufficiently large $i,$ we have
\begin{equation*}
\Phi_i(R', x_j, \tau - R'^2) \geq \epsilon_0 - \epsilon.
\end{equation*}
Applying the monotonicity formula (\ref{hamilton:epsilonestimate}) yields
\begin{equation}\label{sigma:3}
\Phi_i(R, x_j, \tau - R^2) \geq \epsilon_0 - 2\epsilon
\end{equation}
provided that $R < R_0.$
Applying $\liminf$ to both sides of (\ref{sigma:1}), and inserting (\ref{sigma:2}) and (\ref{sigma:3}) yields
\begin{equation}\label{sigma:4}
\begin{split}
\underline{ \Phi} (R, x, \tau - R^2) & \geq \left( 1 - \epsilon \right) \left(\epsilon_0 - 2 \epsilon \right) - C \epsilon E \\
& \geq \epsilon_0 - C \epsilon (\epsilon_0 + E).
\end{split}
\end{equation}
Since $\epsilon$ was arbitrary, we conclude
\begin{equation}
\liminf_{R \searrow 0} \underline{ \Phi} (R, x, \tau - R^2)  \geq \epsilon_0
\end{equation}
as desired.

This completes the proof that $\Sigma$ is closed. Local finiteness of the $\mathcal{H}^{n-4}$-measure is shown in Proposition \ref{prop:hn-4}.

Next, by weak compactness of locally uniformly bounded measures (\ref{sigma:finitenergy}), we may pass to a subsequence such that the limit of measures
\begin{equation}\label{sigma:muexists}
\mu = \lim_{i \to \infty} \left( \left| F_i \left(\tau_i\right) \right|^2 \, dV \right)
\end{equation}
exists. By Fatou's Lemma, 
we may then write
\begin{equation}\label{sigma:nudef}
\mu = \left( \liminf_{i \to \infty}  \left| F_i \left( \tau_i \right) \right |^2 \right) \, dV + \nu
\end{equation}
where $\nu$ is a nonnegative measure supported on $\Sigma.$

To show rectifiability assuming (\ref{sigma:dstarfass}), 
we claim that
\begin{equation}\label{sigma:phiexists}
\begin{split}
\phi(x) & = \lim_{R \searrow 0} R^{4 - n} \int_M \exp -\left(\dfrac{d \left( x, y \right)}{2R} \right)^2  \varphi^2_{x}(y) \, d \nu
\end{split}
\end{equation}
exists and is nonzero for $\mathcal{H}^{n-4}$-a.e. $x \in \Sigma.$ 

Let $\epsilon > 0.$ 
By (\ref{sigma:nudef}) and Proposition \ref{prop:dumbliminf}, we may replace $d\nu$ with $d \mu$ in the limit (\ref{sigma:phiexists}) for $\mathcal{H}^{n-4}$-a.e. $x \in \Sigma.$ Then
 (\ref{sigma:phiexists}) becomes
\begin{equation}\label{sigma:phireallyexists}
\phi(x) = \lim_{R \searrow 0} \lim_{i \to \infty} \Phi_i \left(R, x, \tau_i \right)
\end{equation}
where the inner limit exists by (\ref{sigma:muexists}). Given (\ref{sigma:dstarfass}), by Corollary \ref{cor:xicor}, we may assume that $x$ is a point such that
\begin{equation}\label{sigma:11}
\lim_{R \searrow 0} \underline{\Xi} \left(R, x, \tau - R^2, \tau \right) = 0
\end{equation}
where $\underline{\Xi}$ is defined by (\ref{liminfdef}).

Then, for $R > 0$ be sufficiently small, 
there exists an infinite subsequence of integers $\{j\}$ such that 
\begin{equation*}
\Xi_j(R, x, \tau - R^2, \tau) < \epsilon^2.
\end{equation*}
By Lemma \ref{lemma:antibubble}, this implies that 
\begin{equation}\label{sigma:phicompar}
\left| \Phi_j \left(R, x, \tau_j \right) - \Phi_j \left(R, x, t \right)\right| \leq C \epsilon
\end{equation}
for any $\tau_j - R^2 \leq t \leq \tau_j.$

For any $0 < R' < R,$ we may apply the monotonicity formula (\ref{hamilton:epsilonestimate}) with $R_1 = R$ and $R_2 = R',$ to obtain
\begin{equation*}
\begin{split}
\Phi_j \left(R', x, \tau_j \right) & \leq \left( 1 + \epsilon \right) \Phi_j \left(R, x, \tau_j - R^2 + R'^2 \right) + C \epsilon.
\end{split}
\end{equation*}
Inserting (\ref{sigma:e0bound}) and (\ref{sigma:phicompar}), we have
\begin{equation*}
\Phi_j \left(R', x, \tau_j \right) \leq \Phi_j \left(R, x, \tau_j \right) + \left( C + E_0 \right) \epsilon.
\end{equation*}
This demonstrates that for $R$ sufficiently small, and any $0 < R' < R,$ we in fact have
\begin{equation}\label{sigma:almostdecreasing}
\lim_{i \to \infty} \Phi_i \left(R', x, \tau_i \right) \leq \lim_{i \to \infty} \Phi_i \left(R, x, \tau_i \right) + C\epsilon.
\end{equation}
Therefore
\begin{equation*}
\limsup_{R \searrow 0}\lim_{i \to \infty} \Phi_i \left(R, x, \tau_i \right) \leq \liminf_{R \searrow 0} \lim_{i \to \infty} \Phi_i \left(R, x, \tau_i \right) + C\epsilon.
\end{equation*}
Since $\epsilon > 0$ was arbitrary, 
this implies that the limit $R \searrow 0$ in (\ref{sigma:phireallyexists}) exists, as claimed.

We may conclude from (\ref{sigma:phiexists}), Lemma \ref{lemma:laplacelemma}, and Preiss's Theorem \cite{preiss} (stated as Theorem 1.1 of \cite{delellisrectifiable}) that the measure $\nu$ is $(n-4)$-rectifiable. By (\ref{sigma:nudef}) and Proposition \ref{prop:dumbliminf}, $\Sigma = \mbox{supp } \nu$ up to measure zero, hence the same is true of $\Sigma.$
\end{proof}

\section{Proof of Theorem \ref{thm:uhlenbeck} and Corollary \ref{cor:uhlenbeck}}

\begin{proof}[Proof of Theorem \ref{thm:uhlenbeck}] 
To construct the required subsequence and exhaustion, we argue as follows.

Given an open subset $U _* \subset \left( M \setminus \Sigma \right) \times \N$ (with the box topology), write
$$\mathcal{I}(U_*) = \pi_2(U_*).$$ 
Consider the collection of open subsets
\begin{equation}\label{uhlenbeck:zornset}
\mathcal{S} \subset \{ U_* \subset \left( M \setminus \Sigma \right) \times \N \mbox{ open} \}
\end{equation} 
which satisfy the following conditions: for all $i,j \in \mathcal{I}(U_*),$ there hold
\begin{equation}\label{uhlenbeck:zorncondition}
\sup_{\stackrel{x \in U_i}{\left(1 - \frac{1}{i}\right)\tau \leq t \leq \tau}} |F_{A_j} (x, t) | \leq i \quad (i \leq j)
\end{equation}
\begin{equation*}
U_i \Subset U_j  \quad (i < j).
\end{equation*}
The collection $\mathcal{S}$ is nonempty. For, we may let $x \in M \setminus \Sigma$ and $0 < R < R_0$ be such that a subsequence $\{j\}$ satisfies
\begin{equation*}
\Phi_j \left(R, x, \tau - R^2 \right)< \epsilon_0
\end{equation*}
for all $j.$ By the $\epsilon$-regularity Theorem 6.4 
of \cite{oliveirawaldron}, there exists $\delta > 0$ such that
\begin{equation}\label{uhlenbeck:deltabound}
\sup_{B_{\delta R}(x)\times \LB \left( 1 - \delta \right) \tau , \tau \RB} |F_{A_j} | \leq \frac{C}{\left( \delta R \right)^2}.
\end{equation}
We may then let $U_j = B_{(1 - 1/j) \delta R}(x)$ and choose $i = \lceil \frac{C}{(\delta R)^2} \rceil ,$ so that (\ref{uhlenbeck:deltabound}) implies (\ref{uhlenbeck:zorncondition}).

Define a partial ordering on $\mathcal{S}$ by
$$U_* \leq V_* \mbox{ if } \bigcup_i U_i \subset \bigcup_i V_i.$$
Letting
\begin{equation}\label{uhlenbeck:chain}
U^1_* \leq \cdots \leq U^k_* \leq U^{k + 1}_* \leq \cdots
\end{equation}
be a chain in $\mathcal{S},$ we may construct an upper bound $V_* \in \mathcal{S}$ by the following ``diagonal'' argument. We will construct an increasing sequence of elements $V^k_* \in \mathcal{S},$ 
and then let $V_* = \cup_{k} V^k_*.$

Assume without loss that $U^1_*$ is nonempty, and let $V^1_* = U^1_{\ell_1}$ for any $\ell_1 \in \mathcal{I}(U^1_*)$. Assuming that $V^k_*$ has been chosen, we may choose $\ell_{k + 1} \in \mathcal{I}(U^{k + 1}_*)$ with $\ell_{k + 1} > \ell_k,$ such that
\begin{equation}\label{uhlenbeck:exhaustinduction}
\bigcup_{\stackrel{m \leq k}{ i \leq \ell_k} } U^m_i \Subset U^{k + 1}_{\ell_{k + 1}}.
\end{equation}
We then let
$$V^{k + 1}_* = V^k_* \cup U^{k + 1}_{\ell_{k + 1}}.$$
By construction, the resulting set
$$V_* = \bigcup_k V^k_*$$
is an element of $\mathcal{S}.$ Since $\ell_k \to \infty$ as $k \to \infty,$ from (\ref{uhlenbeck:exhaustinduction}), we have
\begin{equation*}
 \bigcup_{m , i} U^m_i \subset \bigcup_{k,i} V^k_i = \bigcup_i V_i.
\end{equation*}
Therefore, $V_*$ is an upper bound in $\mathcal{S}$ for the given chain (\ref{uhlenbeck:chain}), as required.

We conclude from Zorn's lemma that $\mathcal{S}$ contains a maximal element, $W_*.$ Defining the singular set $\Sigma^W \supset \Sigma$ for the subsequence $\mathcal{I}(W_*)$ via (\ref{sigma:sigmadef}), we claim that $W_*$ is an exhaustion of $M \setminus \Sigma^W.$ For, if there existed $x \in M \setminus \left( \cup_i W_i \cup \Sigma^W \right),$ we could choose $R > 0$ and a further subsequence $J \subset \mathcal{I}(W_*)$ such that $\sup_{B_R(x), j \in J} |F_{A_j}| \leq i,$ for some $i.$ But then, defining $V_* \in \mathcal{S}$ by
\begin{equation}
V_j = W_j \cup B_{(1 - 1/j)R}(x)
\end{equation}
for all $j \in J$ with $j \geq i,$ we conclude that $W_*$ was not maximal, which is a contradiction. Therefore $\cup W_i = M \setminus \Sigma^W,$ as claimed.

Finally, we pass entirely to the subsequence $\mathcal{I}(W_*),$ which we relabel as $\{i \}_{i = 1}^\infty,$ and replace $\Sigma$ by $\Sigma^W.$ The assumption (\ref{uhlenbeck:zorncondition}) then becomes, for each $i \in \N$ and a certain $\tau_i < \tau,$ the crucial bound
\begin{equation}\label{uhlenbeck:curvbound}
\sup_{\stackrel{\stackrel{x \in U_i}{\tau_i \leq t < \tau }}{j \geq i} } \left| F_j(x, t) \right| < \infty.
\end{equation}
By Lemma \ref{lemma:antibubble}, (\ref{uhlenbeck:curvbound}) may be improved to the derivative estimates
\begin{equation}\label{uhlenbeck:derivests}
\sup_{\stackrel{\stackrel{x \in U_i}{ \tau_i \leq t < \tau }}{j \geq i} } \left| \nabla^{(k)}_j F_j(x, t) \right| < \infty
\end{equation}
for each $i,k \in \N.$

With (\ref{uhlenbeck:derivests}) now in hand, the construction of the bundle maps $u_i$ and Uhlenbeck limit $A_\infty$ follows the standard argument. By the Theorem of Uhlenbeck \cite{uhlenbecklp}, for each $x \in M \setminus \Sigma,$ there exists a ball $D \ni x$ and a gauge transformation $v_j$ on $D$ such that $v_j \left( A_j(\tau_j) \right)$ is in Coulomb gauge 
on $\left. E \right|_{D},$ for each $j \geq i.$ From (\ref{uhlenbeck:derivests}), the estimates of Donaldson-Kronheimer \cite{donkron}, Lemma 2.3.11, or Wehrheim \cite{wehrheimuhlenbeck}, Theorem 5.5, give uniform $C^k$ estimates on $v_j(A_j( \tau_j))$ over $D,$ for each $k \in \N$ (where the $C^k$ norms are defined with respect to a fixed reference connection $\nabla_{ref},$ see Remark \ref{rmk:referenceconn}). By the Arzela-Ascoli Theorem, there exists a subsequence which is smoothly convergent over $D.$ The sequence $A_i(\tau_i)$ therefore satisfies the property required by Corollaries \ref{cor:dkballs} and \ref{cor:exhaustion}, with 
$\Omega = M \setminus \Sigma;$ we obtain the required bundle $E_\infty \to M \setminus \Sigma,$ connection $A_\infty,$ and sequence of bundle maps $u_i,$ satisfying
\begin{equation}\label{uhlenbeck:uitauitoinfty}
u_i(A_i(\tau_i)) \to A_\infty \mbox{ as } i \to \infty
\end{equation}
in $C^\infty_{loc} \left( M \setminus \Sigma \right).$

We now turn to the proofs of (\ref{uhlenbeck:titotau}) and (\ref{uhlenbeck:uiaiconv}). Fix $k \in \N;$ for $i \geq k,$ we have
\begin{equation}\label{uhlenbeck:4}
\begin{split}
& \| u_i(A_i(t)) - A_\infty \|_{C^k(U_k)} \\
& \qquad \qquad \,\,\, \leq \| u_i(A_i(t)) - u_i(A_i(\tau_i)) \|_{C^k(U_k)} + \| u_i(A_i(\tau_i)) - A_\infty \|_{C^k(U_k)}.
\end{split}
\end{equation}
The second term on the RHS tends to zero with $i,$ by (\ref{uhlenbeck:uitauitoinfty}). Let
\begin{equation*}
\delta_i = \sqrt{\int_0^\tau \!\!\!\!\! \int_{U_{k+1}} |D^*F_i|^2 \, dV dt }
\end{equation*}
which, by the assumption (\ref{sigma:finitenergy}) and the local energy inequality, are uniformly bounded. 
By (\ref{uhlenbeck:curvbound}), for $i$ sufficiently large and $\tau_{k + 1} \leq t < \tau,$ we have 
\begin{equation}\label{uhlenbeck:Kcurvbound}
\| F_i(t) \|_{C^0(U_{k + 1})} \leq K
\end{equation}
for some $K > 0.$

To prove (\ref{uhlenbeck:titotau}), note that 
$u_i(A_i(t))$ are smooth solutions of (YM) on $\left. E_\infty \right|_{U_{k + 1}}.$ 
Applying Lemma \ref{lemma:distancecomp}, we obtain
\begin{equation}\label{uhlenbeck:5}
\begin{split}
\| u_i(A_i(t)) - u_i(A_i(\tau_i)) \|_{C^k(U_k)} & \leq C \delta_i \sqrt{| \tau_ i - t |} \left( 1 +  \| u_i(A_i(\tau_i)) \|^k_{C^{k -1} (U_k)} \right) \\
& \leq  C \delta_i \sqrt{| \tau_ i - t |}
\end{split}
\end{equation}
for $\tau_{k + 1} \leq t \leq \tau.$ We have absorbed the last factor because $u_i(A_i(\tau_i))$ is convergent, hence uniformly bounded in $C^{k -1} (U_k).$ Since the $\delta_i$ are bounded and $| \tau_i - t_i| \to 0,$ (\ref{uhlenbeck:5}) gives
$$\| u_i(A_i(t_i)) - u_i(A_i(\tau_i)) \|_{C^k(U_k)} \to 0.$$
Returning to (\ref{uhlenbeck:4}), we have
\begin{equation*}
\| u_i(A_i(t_i)) - A_\infty \|_{C^k(U_k)} \to 0 \,\,\, \mbox{ as } i \to \infty.
\end{equation*}
Since $k$ was arbitrary, this proves (\ref{uhlenbeck:titotau}).

To prove (\ref{uhlenbeck:uiaiconv}), we now assume (per (\ref{dstarftozero})) that
\begin{equation}\label{uhlenbeck:deltaito0}
\delta_i \to 0 \,\,\, \mbox{ as }i \to \infty.
\end{equation}
We claim that given any $\tau_0 > 0,$ for $i$ sufficiently large, a bound of the form (\ref{uhlenbeck:Kcurvbound}) will hold for all $0 < \tau_0 \leq t < \tau.$ This is easily seen from Lemma \ref{lemma:antibubble}, which may be applied on a cover of $U_{k + 1},$ in view of (\ref{sigma:e0bound}) and (\ref{uhlenbeck:deltaito0}). It also follows from (\ref{antibubble:derivests}) that $A_\infty$ is a Yang-Mills connection, since
\begin{equation*}
\begin{split}
\|D^*F_{u_i A_i(\tau_i)} \|_{C^0(U_k)} \leq C \|D^*F_{u_i A_i} \|_{L^2(M \times \LB 0, \tau \right) )} & = C \| D^*F_i \|_{L^2( M \times \LB 0, \tau \right))} \\
& \leq C \delta_i \to 0 \mbox{ as } i \to \infty
\end{split}
\end{equation*}
and $u_i A_i(\tau_i) \to A_\infty$ in $C^\infty(U_k).$

We now apply Lemma \ref{lemma:distancecomp}, to again obtain the bound (\ref{uhlenbeck:5}) for $0 < \tau_0 \leq t < \tau.$ Since $\delta_i (\tau_i - t) \to 0,$ we again obtain 
$$\| u_i(A_i(t)) - u_i(A_i(\tau_i)) \|_{C^k(U_k)} \to 0$$
and (\ref{uhlenbeck:4}) becomes
\begin{equation}\label{uhlenbeck:seeyalater}
\| u_i(A_i(t)) - A_\infty \|_{C^k(U_k)} \to 0 \mbox{ as } i \to \infty.
\end{equation}
Since $\tau_0$ and $k$ were arbitrary, this implies (\ref{uhlenbeck:uiaiconv}), completing the proof.
\end{proof}

\begin{proof}[Proof of Corollary \ref{cor:uhlenbeck}] Since $M$ is compact, we have
\begin{equation*}
\int_M |F_i (t)|^2 \, dV + 2 \int_0^t \!\!\!\! \int_M |D^*F_i (s) |^2 \, dV ds = \int_M |F_i (0)|^2 \, dV
\end{equation*}
for any $0 \leq t < \infty.$ Therefore, for any $\tau > 0$ and $t_i \nearrow \infty,$ we have
\begin{equation*}
\int_{t_i - \tau}^{t_i + \tau} \!\!\!\!\!\int_M |D^*F_i (t) |^2 \, dV dt \to 0.
\end{equation*}
We may therefore apply Theorem \ref{thm:uhlenbeck} to the sequence of solutions 
$$A_i(t) = A(t_i + t)$$
to obtain $A_\infty$ and bundle maps $u_i$ satisfying (\ref{uhlenbeck:uiaiconv}) over $\LB t_i + 1 , t_i + 2 \RB,$ say. By the argument just used in proving (\ref{uhlenbeck:seeyalater}) above, the limit extends over arbitrary time intervals. 
\end{proof}


\section{Proof of Theorem \ref{thm:linwangblowup}}\label{sec:blowup}

We first note the following straightforward variant of Theorems \ref{thm:sigma}-\ref{thm:uhlenbeck}: assume that $M_i \Subset M_\infty$ are open submanifolds which exhaust $M_\infty,$ and $g_i$ are metrics on $M_i$ such that
$$g_i \to g_\infty \mbox{ as } i \to \infty$$
in $C^\infty_{loc}(M_\infty, g_\infty).$
Then, letting $A_i$ be solutions of (YM) with respect to the metrics $g_i$ on bundles $E_i \to M_i$ of fixed structure group, we obtain a rectifiable singular set $\Sigma \subset M_\infty$ and Uhlenbeck limit $A_\infty$ on a bundle $E_\infty \to M_\infty \setminus \Sigma.$
Given a fixed manifold $(M,g)$ and $x_0 \in M,$ this version can be used for blowup analysis: let $(M_\infty, g_\infty) = (\R^n, g_{Euc})$ and
$$(M_i, g_i) = \left( B_{\rho_1(x_0)}(x_i), \lambda_i^{-2}g(x_i + \lambda_i x) \right)$$
for sequences $x_i \to x_0 \in M$ and  $\lambda_i \searrow 0.$

\begin{proof}[Proof of Theorem \ref{thm:linwangblowup}]

Let $\tau_i \nearrow \tau,$ and define the measures $\mu$ and $\nu$ by (\ref{sigma:nudef}), as in the proof of Theorem \ref{thm:sigma}.

Without loss of generality, we may take $x_0$ to be a point where the tangent measure $T_{x_0} \nu = T_{x_0} \mu$ is equal to a measure of constant density along an $(n-4)$-plane $V \subset T_{x_0} M.$ Let $\rho_i \searrow 0$ be a sequence such that the rescaled curvature measures converge
\begin{equation}\label{linwangblowup:tangentmeasuresconverge}
\rho_i^2 | F_i(x_0 + \rho_i x, \tau_i)| \to T_{x_0} \nu
\end{equation}
weakly on $\R^n.$ By Proposition \ref{prop:xiprop}, we may also assume that $x_0$ is such that
\begin{equation}\label{linwangblowup:originaldstarflimit}
\lim_{\sigma \nearrow \tau} \limsup_{R \searrow 0} \underline{\Xi}(R, x_0, \sigma, \tau) = 0.
\end{equation}

Before proceeding with the proof, we replace the original sequence $A_i$ with the blown-up sequence of solutions $\rho_i A_i(x_0 + \rho_i x, \tau_i + \rho_i^2 t),$ which solve (YM) on an exhaustion of $T_{x_0} M \times \left( -\infty, 0 \right)$ with respect to the rescaled metrics $\rho_i^{-2} g(x_0 + \rho_i x)$ (converging locally uniformly to the Euclidean metric).

Notice that the assumption (\ref{sigma:finitenergy}) is preserved by parabolic rescaling, due to the monotonicity formula. Applying the variant of Theorem \ref{thm:sigma} discussed above to the rescaled sequence, we have $\Sigma = V,$ and (\ref{linwangblowup:originaldstarflimit}) becomes
\begin{equation}\label{linwangblowup:rescaleddstarflimit}
\liminf_{i \to \infty} \int_\sigma^{0} \!\!\!\!\! \int_M |D^*F_i(x,t)|^2 u_{0,R}(x) \varphi^2_{0, \rho_1(x_0)/ \rho_i}(x) \, dV_x dt = 0
\end{equation}
for any $R>0$ and $-\infty < \sigma < 0.$ We may pass to a subsequence such that for $R = 1,$ $\liminf$ may be replaced by $\lim$ in (\ref{linwangblowup:rescaleddstarflimit}), 
so that (\ref{dstarftozero}) is satisfied. By Theorem \ref{thm:uhlenbeck}, the assumption (\ref{linwangblowup:tangentmeasuresconverge}) implies that $F_{A_\infty} \equiv 0,$ so we have
\begin{equation}\label{linwangblowup:FAitozeropointwise}
\left| F_{A_i}(x,t) \right| \to 0 \mbox{ as } i \to \infty
\end{equation}
locally uniformly on $\left( T_{x_0}M \setminus V \right) \times \left( - \infty, 0 \right).$ Also, by Lemma \ref{lemma:antibubble} and (\ref{linwangblowup:rescaleddstarflimit}), we have
\begin{equation}\label{linwangblowup:phigeq0onV}
\liminf_{R \to 0}\underline{\Phi}(R, x, t) \geq \epsilon_0
\end{equation}
for all $x \in V$ and $t < 0.$

For $x \in T_{x_0}M,$ write
$$x = (y, z)$$
where $y \in V\cong \R^{n-4}$ and $z \in V^{\perp} \cong \R^4,$ and choose coordinates such that $V$ is spanned by $e_1, \ldots, e_{n-4}.$ Define
\begin{equation}
\begin{split}
f_i(y) & = \int_{-1}^{0} \!\! \int_{\{y\} \times B_1^{4}(0)} |D^*F_i(y,z,t)|^2 \, dV_z dt \\
g_i(y,t) & = \int_{\{y\} \times B_1^{4}(0)} \sum_{\alpha = 1}^{n-4} \sum_{\beta = 1}^{n} | \left(F_i\right)_{\alpha \beta} (y,z,t) |^2 \, dV_z \\
h_i(y,t) & = \int_{\{y\} \times B_1^{4}(0)} |F_i(y,z,t)|^2 \, dV_z.
\end{split}
\end{equation}
By (\ref{linwangblowup:rescaleddstarflimit}), for any $y_0 \in V,$ we have
\begin{equation}\label{linwangblowup:fiinttozero}
\int_{B_1^{n-4}(y_0)} f_i (y) \, dV_y \to 0 \mbox{ as } i \to \infty.
\end{equation}
Given (\ref{linwangblowup:fiinttozero}), the monotonicity-formula trick of Lin-Wang \cite{linwangbook}, p. 211, implies 
\begin{equation}\label{linwangblowup:giinttozero}
\int_{-1}^{-1/8} \!\!\!\! \int_{B_1^{n-4}(0)} g_i (y,t) \, dV_y dt \to 0 \mbox{ as } i \to \infty.
\end{equation}
The monotonicity formula also implies
\begin{equation}\label{linwangblowup:hibounded}
\int_{B_1^{n-4}(0) } h_i(y,t) \, dV_y \leq C E
\end{equation}
for all $-1 \leq t < 0.$

Define the Hardy-Littlewood maximal functions
\begin{equation}
\begin{split}
M(f_i)(y) & = \sup_{0 < R \leq 1} R^{4-n} \int_{B_R^{n - 4}(0)} f_i(w) \, dV_w \\
M(g_i)(y, t) & = \sup_{0 < R \leq 1} R^{2 - n} \int_{t - R^2}^{t} \!\! \int_{B_R^{n - 4}(0)} g_i(w,s) \, dV_w ds \\
M(h_i)(y, t) & = \sup_{0 < R \leq 1} R^{4 - n} \!\! \int_{B_R^{n - 4}(0)} h_i(w,t) \, dV_w.
\end{split}
\end{equation}
Letting $L_i = \int_{B_1^{n-4}(y_0)} f_i (y) \, dV_y,$ by the weak $L^1$-estimate for the maximal function, we have
\begin{equation}
\mu_{Leb} \{ M(f_i) \geq \epsilon \} \leq C \frac{L_i} {\epsilon}
\end{equation}
for any $\epsilon > 0.$ Since $L_i \to 0$ (\ref{linwangblowup:fiinttozero}), we may pass to a subsequence for which there exist points $y_i \in B_{1/2}^{n-4}(0)$ such that
\begin{equation}\label{linwangblowup:Mfitozero}
M(f_i)(y_i) \to 0.
\end{equation}
Arguing similarly, by (\ref{linwangblowup:giinttozero}), we may choose $t_i \in \LB -1/2, -1/4 \RB$ such that
\begin{equation}\label{linwangblowup:Mgitozero}
M(g_i)(y_i, t_i) \to 0.
\end{equation}
Also, by (\ref{linwangblowup:hibounded}), we may assume that $y_i, t_i$ are such that
\begin{equation}\label{linwangblowup:finiteenergymaximal}
M(h_i)(y_i, t_i - \delta_i^2) \leq C E.
\end{equation}

For $y, s \in \R^{n-4}$ and $R > 0,$ let
\begin{equation}\label{linwangblowup:vdef}
v_{R, y}(s) = \frac{R^{4-n}}{(4\pi)^{n/2}} \exp \left( - \frac{\left| y - s \right|^2}{4R^2} \right).
\end{equation}
Then, since
$$R^{4-n} \exp \left( - \frac{r^2}{4R^2} \right) \leq C\left( r + R \right)^{4-n} \exp \left( - \frac{r^2}{2R^2} \right)$$
it is easily seen from (\ref{linwangblowup:Mfitozero}-\ref{linwangblowup:Mgitozero}) that for any $y \in \R^{n-4},$ we have
\begin{equation}\label{linwangblowup:unrescaledfigitozero}
\begin{split}
& \sup_{0 < R \leq 1} \int_{B_1^{n - 4}(0)} f_i(s) v_{R, y_i + Ry}(s) \, dV_s \to 0 \\
& \sup_{0 < R \leq 1} R^{-2} \int_{t_i - R^2}^{t_i} \!\! \int_{B_1^{n - 4}(0)} g_i(s) v_{R, y_i + Ry}(s) \, dV_s dt \to 0 
\end{split}
\quad \mbox{ as } i \to \infty
\end{equation}
locally uniformly in $y.$
Since $|F_i|$ are continuous, in light of (\ref{linwangblowup:FAitozeropointwise}), for $i$ sufficiently large it is possible to choose $\delta_i > 0$ and $z_i \in B_{1/4}^{4}(0)$ such that
\begin{equation}\label{linwangblowup:Phimaxepsilon1}
\begin{split}
\Phi_{0,1}\left(A_i; \delta_i, (y_i,z_i), t_i \right) & = \int \left| F_i (x,t_i) \right|^2 u_{(y_i, z_i), \delta_i}(x) \varphi^2_{0,1}(x) \, dV_x \\
& = \frac{\epsilon_0}{ 2} = \max_{z \in B^4_{1/2}} \Phi_{0,1}\left(A_i; \delta_i, (y_i,z), t_i \right).
\end{split}
\end{equation}
It follows from (\ref{linwangblowup:FAitozeropointwise}) and (\ref{linwangblowup:phigeq0onV}) that $\delta_i \to 0$ and $z_i \to 0 \in \R^4$ as $i \to \infty.$

Finally, we rescale further to define the sequence
\begin{equation}
\bar{A}_i \left( (y,z), t \right) = \delta_i A_i \left( (y_i + \delta_i y, z_i + \delta_i z), t_i + \delta_i^2 (t + 1) \right).
\end{equation}
Passing again to a subsequence, let $B(x)$ be the Uhlenbeck limit of $\bar{A}_i$ on $\R^n \cong T_x M,$ with $\tau = 0,$ per Theorems \ref{thm:sigma}-\ref{thm:uhlenbeck}. For the rescaled sequence, (\ref{linwangblowup:unrescaledfigitozero}) with $R = \delta_i$ becomes
\begin{align}\label{linwangblowup:rescaledfitozero}
\xi^2_i(x) \, & \hat{=} \int_{-c \delta_i^{-2}}^{c \delta_i^{-2}} \!\!\! \int_{B_{\delta_i^{-1}}} \left| D^*F_{\bar{A}_i} \right|^2 u_{1, x} \, dV dt \to 0 \\
\label{linwangblowup:rescaledgitozero} \eta^2_i(x) \, & \hat{=} \int_{-1}^{0} \!\! \int_{B_{\delta_i^{-1}}} \sum_{\alpha = 1}^{n-4} \sum_{\beta = 1}^{n} \left| \left(F_{\bar{A}_i}\right)_{\alpha \beta} \right|^2 u_{1, x} \, dV dt \to 0
\end{align}
as $i \to \infty,$ for any $x \in \R^n.$ We have replaced $v_{1, y},$ defined by (\ref{linwangblowup:vdef}), by $u_{1, x} = u_{1, (y,z)} \leq v_{1, y},$ defined by (\ref{udef}). The bound (\ref{linwangblowup:finiteenergymaximal}) becomes
\begin{equation}\label{linwangblowup:finiteenergy}
\int_{B_1^{n-4}(0) \times B_{1/ \delta_i}^{4}(0)} |F_{\bar{A}_i}(x,- 1)|^2 \, dV_x \leq C E.
\end{equation}
The statement (\ref{linwangblowup:Phimaxepsilon1}) becomes
\begin{equation}\label{linwangblowup:rescaledPhimaxepsilon1}
\Phi_i\left(1, 0, -1 \right) = \frac{\epsilon_0}{2}  = \max_{z \in \R^4} \Phi_i \left(1, (0,z), -1 \right).
\end{equation}
The cutoff function $\varphi_{0,1}$ of (\ref{linwangblowup:Phimaxepsilon1}) is negligible after the rescaling, and we will suppress it in the following calculations.

We shall argue that the convergence $\bar{A_i} \to B$ is smooth over all compact subsets of $\R^n,$ and $B(x)$ splits as a product. To this end, 
we compute
\begin{equation}\label{linwangblowup:partialcalc}
\begin{split}
\frac{\del}{\del x^\alpha} |F|^2 & = \LA \nabla_\alpha F_{\beta\gamma}, F_{\beta\gamma} \RA \\
& = \LA \left( \nabla_\beta F_{\alpha \gamma} + \nabla_\gamma F_{\beta \alpha} \right), F_{\beta\gamma} \RA \\
& = 2 \LA \nabla_\beta F_{\alpha \gamma}, F_{\beta \gamma} \RA \\
& = 2 \left( \nabla_\beta \LA F_{\alpha \gamma}, F_{\beta \gamma} \RA - \LA F_{\alpha \gamma}, \nabla_\beta F_{\beta \gamma} \RA \right) \\
& = 2 \left( \nabla_\beta \LA F_{\alpha \gamma}, F_{\beta \gamma} \RA + \LA F_{\alpha \gamma}, D^*F_\gamma \RA \right).
\end{split}
\end{equation}
Then
\begin{equation}\label{linwangblowup:delxalphaphii}
\begin{split}
\frac{\del}{\del x^\alpha} \Phi_i(1, x, t) & = \int |F_i(w,t)|^2 \frac{\del}{\del x^\alpha}u_{1, x}(w) \, dV_w \\
& = \int \frac{\del}{\del w^\alpha} |F_i(w,t)|^2 u_{1, x}(w) \, dV_w \\
& = 2\int \left( - \LA F_{\alpha \gamma}, F_{\beta \gamma} \RA \frac{\del}{\del w^\beta} u_{1, x}(w) + \LA F_{\alpha \gamma}, D^*F_\gamma \RA u_{1, x}(w) \right)  \, dV_w
\end{split}
\end{equation}
where we have used (\ref{linwangblowup:partialcalc}) in the third line. Define
$$\Psi_i(R, x) = \int_{-1}^0 \Phi_i(R,x,t) \, dt.$$
By (\ref{uxr2calc}), we have
$$|\nabla u_{1, x}(w)| \leq C \sqrt{ u_{1, x}u_{2, x}(w)}.$$
Letting $\alpha \in \{ 1, \ldots, n-4 \},$ we may integrate (\ref{linwangblowup:delxalphaphii}) in time and apply H\"older's inequality, to obtain
\begin{equation}\label{linwangblowup:deltalphaphiinext}
\begin{split}
\left| \frac{\del}{\del y^\alpha} \Psi_i(1, x) \right| \leq C \eta_i(x) \left( \Psi_i(2, x) + \xi_i(x) \right).
\end{split}
\end{equation}
By (\ref{linwangblowup:rescaledfitozero}-\ref{linwangblowup:rescaledgitozero}), 
the RHS of (\ref{linwangblowup:deltalphaphiinext}) tends to zero locally uniformly in $x$ as $i \to \infty.$ Then for any $y \in \R^{n-4}$ and $z \in \R^4,$ (\ref{linwangblowup:deltalphaphiinext}) implies
\begin{equation*}
\left| \Psi_i(1, (y, z)) - \Psi_i(1, (0, z))  \right|  \to 0 \mbox{ as } i \to \infty.
\end{equation*}
From (\ref{linwangblowup:rescaledPhimaxepsilon1}), we have
\begin{equation}\label{linwangblowup:Psii0andy}
\limsup_{i \to \infty} \Psi_i(1, (y, z)) = \limsup_{i \to \infty} \Psi_i(1, (0, z)) \leq \frac{\epsilon_0}{2}.
\end{equation}
But from Lemma \ref{lemma:antibubble} and (\ref{linwangblowup:rescaledfitozero}), we have
$$\limsup_{i \to \infty} \Psi_i(1, x) = \limsup_{i \to \infty} \Phi_i(1, x, -1) = \limsup_{i \to \infty}\Phi_i(1, x, t)$$
for any $t \in \R.$ Therefore (\ref{linwangblowup:Psii0andy}) yields
\begin{equation}
\limsup_{i \to \infty} \Phi_i(1, x, t) \leq \frac{\epsilon_0}{2}
\end{equation}
for all $x \in \R^n$ and $t \in \R.$ Therefore $\Sigma = \emptyset,$ and the limit $\bar{A}_i \to B$ takes place smoothly on $\R^{n + 1}.$

Since the convergence is strong on compact sets and the energy is locally bounded, 
(\ref{linwangblowup:finiteenergy}) and (\ref{linwangblowup:rescaledPhimaxepsilon1}) are preserved in the limit. Therefore $B$ is not flat, and has finite energy on the strip $B_1^{n-4}(0) \times \R^4.$ By (\ref{linwangblowup:rescaledgitozero}), $V \intprod F_B = 0,$ hence $B(y,z)$ 
reduces to a finite-energy Yang-Mills connection on $\R^4,$ as claimed.
\end{proof}

\bibliographystyle{amsinitial}
\bibliography{biblio}

\end{document}